\newcommand{\ob}{{\hspace{.2em}[\hspace{-.4em}[\hspace{.2em}}}
\newcommand{\cb}{{\hspace{.2em}]\hspace{-.4em}]\hspace{.2em}}}
\newtheorem{proposition}{\bf Proposition}
\newtheorem{theorem}{\bf Theorem}
\newtheorem{lemma}{\bf Lemma}
\newtheorem{corollary}{\bf Corollary}
\theoremstyle{remark}
\newtheorem*{remark}{\bf Remark}
\newtheorem*{example}{\bf Example}
\newtheorem{question}{\bf Question}
\def\cal{\mathcal}
\def\sm{{\smallsetminus}}
\def\and{{\quad\text{and}\quad}}
\def\ds{\displaystyle}
\def\N{{\mathbb N}}
\def\Z{{\mathbb Z}}
\def\Q{{\mathbb Q}}
\def\Qbar{{\overline \Q}}
\def\U{{\mathbb U}}
\def\C{{\mathbb C}}
\def\Chat{{\widehat \C}}
\def\Disk{{\mathbb D}}
\def\bE{{\mathcal T}}
\def\H{{\mathcal H}}
\def\P{{\mathbb P}}
\def\M{{\cal M}}
\def\MM{{\cal N}}
\def\spec{{\Sigma}}
\def\cf{{\cal C}_f}
\def\vf{{\cal V}_f}
\def\pf{{\cal P}_f}
\def\Rat{{\rm Rat}}
\def\cQ{{\cal Q}}
\def\qf{{\cal Q}_f}
\def\q{{\bf q}}
\def\btheta{{\boldsymbol\theta}}
\def\bxi{{\boldsymbol\xi}}
\def\btau{{\boldsymbol\tau}}
\def\bvtheta{{\boldsymbol\vartheta}}
\def\bomega{{\boldsymbol\omega}}
\def\m{{\mathfrak m}}
\def\D{{\mathrm D}}
\def\T{{\rm T}}
\def\d{{\rm d}}
\def\dz{{{\rm d}z}}
\def\result{{\rm resultant}}
\def\deg{{\rm deg}}
\def\card{{\rm card}}
\def\ord{{\rm ord}}
\def\id{{\rm id}}
\def\res{{\rm residue}}
\def\Res{{\rm Res}}
\title{Eigenvalues of the Thurston Operator}
\subjclass{}
\thanks{The research of the first author was supported in part by the ANR grant Lambda ANR-13-BS01-0002, the IUF, ICERM, and the Clay Mathematics Institute}
\email{xavier.buff@math.univ-toulouse.fr}
\address{ %
  Institut de Math\'ematiques de Toulouse\\
 Universit\'e Paul Sabatier\\
  118, route de Narbonne \\
  31062 Toulouse Cedex \\
  France }
\email{adame@maths.warwick.ac.uk}
\address{ %
Mathematics institute\\
 University of Warwick\\
 Coventry CV4 7AL\\
 United Kingdom }
\thanks{The research of the third author was supported in part by the NSF and the Sloan Foundation}
\email{kochsc@umich.edu}
\address{Department of Mathematics\\
530 Church Street\\
East Hall\\
University of Michigan\\
Ann Arbor MI 48109\\
United States }
\begin{document}

\dedicatory{for Bill}

\begin{abstract}
Let $f:\Chat\to\Chat$ be a postcritically finite rational map. Let $\cQ(\Chat)$ be the space of meromorphic quadratic differentials on $\Chat$ with simple poles. We study the set of eigenvalues of the pushforward operator $f_*:\cQ(\Chat)\to \cQ(\Chat)$. In particular, we show that when $f:\C\to \C$ is a unicritical polynomial of degree $D$ with periodic critical point, the eigenvalues of $f_*:\cQ(\Chat)\to \cQ(\Chat)$ are contained in the annulus $\bigl\{\frac{1}{4D}<|\lambda|<1\bigr\}$ and belong to $\frac{1}{D} \U$ where $\U$ is the group of algebraic units.
\end{abstract}

\maketitle

\begin{figure}[htbp]
\centerline{
\includegraphics[height=11cm]{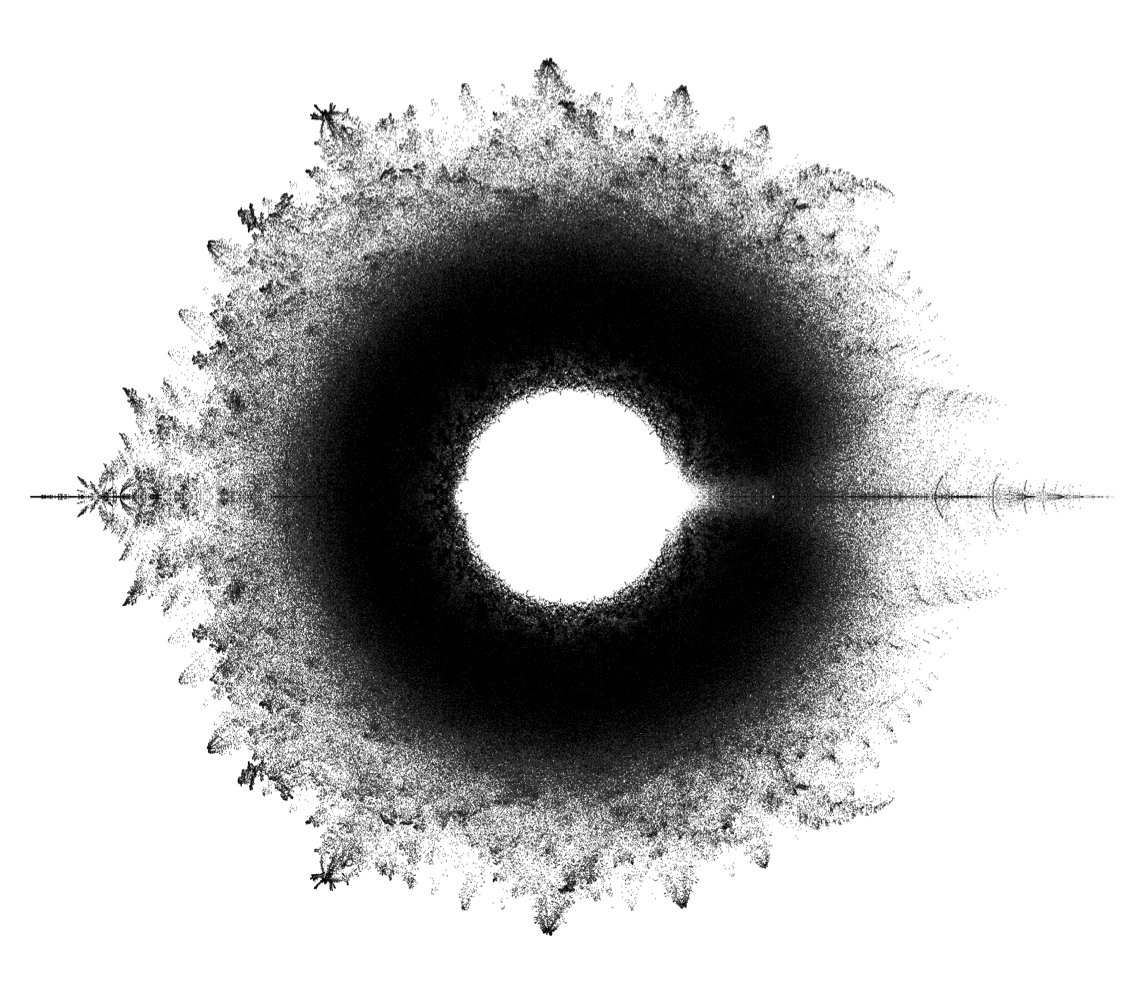}
}
\end{figure}

\section*{Introduction}\label{intro}

\noindent Let $\Chat$ denote the Riemann sphere. Throughout this article, $D\geq 2$ and $\Rat_D$ is the space of rational maps $f:\Chat\to\Chat$ of degree $D$. We denote the set of critical points of $f$ by $\cf$ and the set of critical values by $\vf$. 
The postcritical set $\pf$ is the smallest forward invariant subset of $\Chat$ which contains $\vf$:
\[\pf:=\bigcup_{n\geq 1} f^{\circ n}(\cf).\]

We study {\em postcritically finite rational maps}; that is, rational maps $f:\Chat\to \Chat$ for which $\pf$ is finite. 
It follows from work of Thurston that with the exception of flexible Lattès maps (see \S \ref{sec:lattes} for the definition), postcritically finite rational maps are rigid: if two postcritically finite rational maps are topologically conjugate, then either they are flexible Lattès maps, or they are conjugate by a Möbius transformation \cite{dh}. 

A more elementary result concerns infinitesimal rigidity: if $t\mapsto f_t$ is an analytic family of postcritically finite rational maps, then either the maps are flexible Lattès maps, or there is an analytic family of Möbius transformations $t\mapsto M_t$ such that $M_0={\rm id}$ and $f_t\circ M_t = M_t\circ f_0$. The proof of this result relies on the following lemma, in which $\cQ(\Chat)$ is the space of meromorphic quadratic differentials on $\Chat$ with simple poles and $f_*:\cQ(\Chat)\to \cQ(\Chat)$ is the Thurston pushforward operator (see \S \ref{QDs} for the definition). 

\begin{lemma}[Thurston]
Assume $f\in \Rat_D$ is postcritically finite. If $\lambda$ is an eigenvalue of $f_*:\cQ(\Chat)\to \cQ(\Chat)$, then $|\lambda|\leq 1$.  In addition, $\lambda= 1$ is an eigenvalue if and only if $f$ is a flexible Lattès map. 
\end{lemma}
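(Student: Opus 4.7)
The strategy is to equip $\cQ(\Chat)$ with the natural $L^1$ norm
\[
\|q\|:=\int_{\Chat}|q|,
\]
and to show that $f_*$ is a weak contraction with respect to this norm, i.e.\ $\|f_*q\|\le\|q\|$. Because a meromorphic quadratic differential with at most simple poles is integrable (near a simple pole $|q|\sim|z|^{-1}\,|dz|^2$, which is $L^1$ in two real dimensions), the norm is finite on $\cQ(\Chat)$. First I would write, in local coordinates, $(f_*q)(w)=\sum_{f(z)=w}q(z)/f'(z)^2$, take absolute values under the sum, and change variables $w=f(z)$ branch by branch: the factor $|f'(z)|^2$ cancels against the Jacobian $|dw|^2=|f'(z)|^2|dz|^2$. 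This gives
\[
\|f_*q\|\le\int_{\Chat}\sum_{f(z)=w}\frac{|q(z)|}{|f'(z)|^2}\,|dw|^2=\int_{\Chat}|q|=\|q\|.
\]
If $f_*q=\lambda q$ with $q\ne 0$, then $|\lambda|\cdot\|q\|=\|f_*q\|\le\|q\|$, forcing $|\lambda|\le 1$.

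For the characterization of $\lambda=1$, the point is to analyze when the triangle inequality above is sharp. If $f_*q=q$ and $q\ne 0$, then $\|f_*q\|=\|q\|$, so for almost every $w\in\Chat$ the $D$ complex numbers $q(z)/f'(z)^2$, $z\in f^{-1}(w)$, must be positive real multiples of one another. Passing from $q$ to the measurable line field $\mu_q:=\bar q/|q|$ on the sphere (defined on the complement of the zeros of $q$), this equality condition translates exactly into $f^*\mu_q=\mu_q$ almost everywhere: $q$ determines an $f$-invariant holomorphic line field on a set of positive measure.

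The heart of the matter is then to invoke the Thurston/McMullen rigidity theorem: a postcritically finite rational map supporting a measurable invariant line field on a set of positive measure must be a flexible Latt\`es map. This is the main obstacle; the proof is not easy, but it is a well-documented consequence of the no-invariant-line-fields theorem applied to the orbifold structure of $f$, together with the classification of Thurston-rigid orbifolds. This is the step I expect to cite rather than reprove.

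Finally, for the converse I would exhibit the invariant differential directly. If $f$ is a flexible Latt\`es map, there is a complex torus $\T=\C/\Lambda$, an affine endomorphism $L:\T\to\T$ of multiplier some $\alpha$ with $|\alpha|^2=D$, and a degree-two quotient $\wp:\T\to\Chat$ such that $f\circ\wp=\wp\circ L$. The quadratic differential $dz^2$ on $\T$ is $\pm 1$-invariant and descends to a quadratic differential $q_0$ on $\Chat$ with simple poles at the four critical values of $\wp$; a direct computation using $L^*(dz^2)=\alpha^2\,dz^2$ together with the relation $f\circ\wp=\wp\circ L$ shows $f_*q_0=q_0$, and moreover that $q_0\in\cQ(\Chat)$. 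This produces the eigenvalue $\lambda=1$ and completes the equivalence.
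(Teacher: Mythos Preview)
Your contraction argument for $|\lambda|\le 1$ is exactly the paper's: triangle inequality on the $L^1$ norm, using that elements of $\cQ(\Chat)$ are integrable.

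For the implication ``$\lambda=1$ forces $f$ to be a flexible Latt\`es map,'' however, you take a genuinely different route. You pass from $q$ to the associated measurable line field and then invoke the McMullen no-invariant-line-fields theorem as a black box. The paper instead stays with the meromorphic differential: from the equality case of the contraction principle one gets $f^*(f_*q)=\phi\,q$ with $\phi\ge 0$; since $f_*q=\lambda q$, the ratio $f^*q/q$ is a meromorphic function which is real and nonnegative, hence a constant, and comparing $L^1$ norms gives the constant $D$, i.e.\ $q=D\lambda\,f^*q$. From this identity between \emph{meromorphic} objects the paper does a short pole-counting: preimages of poles of $q$ are poles of $q$ or critical points, and a Riemann--Hurwitz-style count forces exactly four simple poles, all critical points simple, and $f^{-1}(\{\text{poles}\})=\{\text{poles}\}\sqcup\cf$. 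This is the combinatorial characterization of Latt\`es maps with four postcritical points, and lifting to the torus then identifies $\lambda$ with $\alpha^2/D$; flexibility ($\alpha\in\Z$) corresponds to $\lambda=1$.

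So your argument is correct, but you discard information by passing to the line field: the equality case already hands you the much stronger relation $f^*q=Dq$ between meromorphic quadratic differentials, from which an elementary and self-contained argument (no McMullen) finishes. Your approach buys brevity if one is willing to cite a deep theorem; the paper's approach buys self-containment and also yields the finer statement that $|\lambda|=1$ already forces $f$ to be a (not necessarily flexible) Latt\`es map with four postcritical points. Your converse, exhibiting the eigenvector via $\wp^*q_0=c\,dz^2$ and the torus relation, is fine and in fact more explicit than what the paper writes out.
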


We recall the proof in \S\ref{sec:contraction} and derive infinitesimal rigidity in \S\ref{sec:rigidity}. We then study the eigenvalues of $f_*:\cQ(\Chat)\to \cQ(\Chat)$. The subspace $\qf\subset \cQ(\Chat)$ of quadratic differentials with poles contained in $\pf$ is invariant by $f_*$.
Let  $\spec_f$ be the set of eigenvalues of $f_*:\qf\to \qf$, and let $\Lambda_f$ be the set of eigenvalues of the induced operator $f_*:\cQ(\Chat)/\qf\to \cQ(\Chat)/\qf$. In \S\ref{sec:qchat}, we study $\Lambda_f$ and in \S\ref{sec:eigenqf}, we study $\spec_f$, establishing the following results. 

\begin{theorem}
The set $\Lambda_f$ consists of $0$ and the complex numbers $\lambda\in \C\sm\{0\}$ such that $1/\lambda^m$ is the multiplier of a cycle of $f$ of period $m$ which is not contained in $\pf$. If $\lambda\in \Lambda_f\sm\{0\}$, then $\lambda$ is an algebraic number but not an algebraic integer. 
\end{theorem}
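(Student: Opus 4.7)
Since $\qf$ consists of quadratic differentials with poles in $\pf$, the residue map $q\mapsto (\mathrm{res}_p q)_{p\in\Chat\sm\pf}$ identifies $\cQ(\Chat)/\qf$ with the space of finitely supported $\C$-valued functions on $\Chat\sm\pf$; surjectivity uses the elementary differentials $\frac{c\,\dz^2}{(z-a)(z-p_0)}$ with $p_0\in\pf$, which realize an arbitrary residue at $a\notin\pf$ while hiding the counter-pole in $\pf$. Because $\vf\subset\pf$, every $a\in\Chat\sm\pf$ is a regular value of $f$, and a direct local computation shows that a simple pole of residue $r$ at $a$ contributes a simple pole of residue $r/f'(a)$ to $f_*q$ at $f(a)$; this contribution survives in the quotient iff $f(a)\notin\pf$. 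Hence on $\cQ(\Chat)/\qf$ the operator $f_*$ pushes residue data forward along $f$, rescales by $1/f'$, and kills residues landing in $\pf$.

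\textbf{Eigenvalue formula.} An eigenvector for $\lambda\neq 0$ must have finite support $B\subset\Chat\sm\pf$ with $f(B)=B$, which forces $B$ to be a disjoint union of periodic cycles of $f$ avoiding $\pf$ (equivalently, not contained in $\pf$, since cycles are either entirely inside or entirely outside the forward-invariant set $\pf$). Restricting to a single cycle $z_0\mapsto z_1\mapsto\cdots\mapsto z_{m-1}\mapsto z_0$, the operator $f_*$ on the $m$-dimensional space of cycle residues is a weighted cyclic permutation whose $m$-th power equals $(1/\rho)\cdot\id$, where $\rho=(f^{\circ m})'(z_0)=\prod_i f'(z_i)$ is the multiplier; its eigenvalues are therefore the $m$ roots of $\lambda^m=1/\rho$, matching the stated description of $\Lambda_f\sm\{0\}$. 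The eigenvalue $0$ is realized either by $[q]$ supported at a point of $f^{-1}(\pf)\sm\pf$, or, when this set is empty (as for $f(z)=z^D$), by residues at the $D$ preimages of a regular value satisfying $\sum_i r_i/f'(z_i)=0$.

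\textbf{Arithmetic properties and main obstacle.} By Thurston's rigidity, $f$ is (up to M\"obius conjugacy) defined over $\Qbar$, so periodic multipliers, and hence $\lambda$, are algebraic. For the non-integrality, postcritical finiteness combined with the Fatou--Shishikura inequality implies that every non-repelling cycle of $f$ lies in $\pf$, so any cycle outside $\pf$ is repelling: $|\rho|>1$ and $|\lambda|<1$. This dynamical reasoning is Galois-equivariant: for every embedding $\sigma\colon\Qbar\hookrightarrow\C$, the conjugate $f^\sigma$ is itself a postcritically finite complex rational map with postcritical set $\sigma(\pf)$, the conjugate cycle avoids this set, and its multiplier is $\sigma(\rho)$. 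Applying the Fatou--Shishikura argument to $f^\sigma$ thus yields $|\rho^\sigma|>1$, so $|\lambda^\sigma|<1$ for every Galois conjugate $\lambda^\sigma$ of $\lambda$. If $\lambda$ were a nonzero algebraic integer, Kronecker's theorem would force it to be a root of unity, contradicting $|\lambda|<1$. The main subtlety is this last step: packaging the Galois-equivariance of postcritical finiteness and of repellingness outside $\pf$ so that Kronecker's theorem applies and closes the argument in one line.
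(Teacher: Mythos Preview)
Your strategy matches the paper's: identify $\cQ(\Chat)/\qf$ with residue data on $\Chat\sm\pf$, compute $f_*$ there as a weighted push-forward along $f$, read off the nonzero eigenvalues cycle by cycle, and then use Galois-equivariance of postcritical finiteness to trap every conjugate of $\lambda$ in the open unit disk and rule out nonzero algebraic integers (the paper uses the norm directly rather than Kronecker, but that is cosmetic).

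Two repairs are needed. First, your test differential $\dfrac{c\,\dz^2}{(z-a)(z-p_0)}$ has a \emph{double} pole at $\infty$ and so does not lie in $\cQ(\Chat)$; surjectivity of the residue map requires three auxiliary points, and the paper builds the corresponding differentials as products $\bomega_{a,x_1}\otimes\bomega_{x_2,x_3}$ of logarithmic $1$-forms with $x_1,x_2,x_3$ distinct in a base set $B\supseteq\pf$. In particular, when $\card(\pf)=2$ one must first adjoin a repelling fixed point to $\pf$ before the residue isomorphism holds; the paper does this and then checks the two power-map cases $z\mapsto z^{\pm D}$ by hand. Second, your appeal to Thurston rigidity to place $f$ over $\Qbar$ fails for flexible Latt\`es maps; the paper treats that family separately at the outset, observing that their cycle multipliers are all $D^m$, so $\Lambda_f\sm\{0\}$ sits on the circle of radius $1/\sqrt{D}$ and non-integrality is immediate.
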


\begin{theorem}
If $\lambda\in \spec_f$, then $\lambda$ is an algebraic number. If $\lambda$ is an algebraic integer, then either $\lambda=0$, or $f$ is a Lattès map and 
\[\lambda\in \left\{±1,\ ±{\rm i},\  \frac{1}{2}±{\rm i}\frac{\sqrt3}{2},\ -\frac{1}{2}±{\rm i}\frac{\sqrt3}{2}\right\}.\]
\end{theorem}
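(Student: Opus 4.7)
The plan is to combine a Galois-theoretic enhancement of Thurston's lemma with an explicit analysis of Latt\`es maps. First, I would observe that because $f$ is postcritically finite, after a M\"obius conjugation we may place three distinguished points of $\pf$ at $0, 1, \infty$; then all remaining elements of $\pf$ and the coefficients of $f$ lie in $\Qbar$. The space $\qf$ is finite-dimensional of dimension $|\pf|-3$ and admits a natural basis indexed by $\pf\sm\{0,1,\infty\}$, in which the matrix of $f_*$ has entries in $\Qbar$; hence every $\lambda\in\spec_f$ is automatically algebraic.

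For the integrality statement, suppose $\lambda$ is an algebraic integer. For each embedding $\sigma\colon\Qbar\hookrightarrow\C$, applying $\sigma$ coefficient-wise to $f$ produces another postcritically finite rational map $f^\sigma$ of degree $D$, and $\sigma(\lambda)$ is an eigenvalue of $(f^\sigma)_*$ on $\mathcal{Q}_{f^\sigma}\subseteq\cQ(\Chat)$. Thurston's lemma applied to each $f^\sigma$ gives $|\sigma(\lambda)|\le 1$ for every embedding $\sigma$, and Kronecker's theorem then forces $\lambda=0$ or $\lambda$ to be a root of unity. If $\lambda\neq 0$ has order $n$, then $\lambda^n=1$ is an eigenvalue of $f_*^n=(f^n)_*$ on $\qf\subseteq\cQ(\Chat)$, and Thurston's lemma applied to the postcritically finite map $f^n$ forces $f^n$ to be a flexible Latt\`es map; invoking the standard fact that an iterate of a rational map being Latt\`es implies the map itself is Latt\`es, I conclude that $f$ is a Latt\`es map.

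To pin down the precise value of $\lambda$, I would realize $f=\pi\circ L\circ\pi^{-1}$ where $L(z)=az+b$ is an affine endomorphism of a complex torus $\bE=\C/\Lambda$ and $\pi\colon\bE\to\Chat$ is the quotient by a finite cyclic group $G\subseteq\mathrm{Aut}(\bE)$ of order $k\in\{2,4,6\}$. If $k=4$ or $k=6$ then $\pi$ has exactly three branch values on $\Chat$, so $|\pf|=3$ and $\qf=\{0\}$, contradicting $\lambda\in\spec_f$. Hence $k=2$, $|\pf|=4$, $\dim\qf=1$, and the sole eigenvalue of $f_*$ on $\qf$ equals $\bar a/a$, obtained from the direct computation $L_*\dz^2=(\bar a/a)\dz^2$ on $\bE$ transferred via $\pi^*$. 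The multiplier $a$ lies in $\Z$, $\Z[i]$, or $\Z[\omega]$ (with $\omega=\mathrm{e}^{2\pi\mathrm{i}/3}$) depending on the lattice $\Lambda$. Since $|\bar a/a|=1$, it is an algebraic integer precisely when it is a unit in the ring of integers of $\Q(a)$, and the units of $\Z$, $\Z[i]$, $\Z[\omega]$ together are $\{\pm 1\}\cup\{\pm 1,\pm i\}\cup\{\pm 1,\pm\omega,\pm\omega^2\}$, which is exactly the set displayed in the theorem. The main obstacles will be verifying the identification of $\qf$ with a space of $G$-invariant quadratic differentials on $\bE$ (with simple poles on $\Chat$ balanced against the ramification of $\pi$), the classification of Latt\`es maps by the order of $G$ with the corresponding value of $|\pf|$, and the invoked fact that an iterate being Latt\`es forces $f$ itself to be Latt\`es.
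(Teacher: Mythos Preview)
Your proposal is essentially correct and reaches the same conclusion as the paper, but the route diverges in an interesting way. The paper proceeds by a case split: it first handles Latt\`es maps with four postcritical points directly (computing the unique eigenvalue and checking when it is an algebraic integer), and then, for non-Latt\`es $f$, observes that every Galois conjugate $f^\sigma$ is again non-Latt\`es, so Thurston's contraction gives the strict inequality $|\sigma(\lambda)|<1$ for every $\sigma$; the norm of $\lambda$ is then an integer of modulus less than $1$, forcing $\lambda=0$. Your approach avoids this case split by applying only the weak inequality $|\sigma(\lambda)|\le 1$, invoking Kronecker to get a root of unity, and then passing to an iterate so that $1$ becomes an eigenvalue, forcing $f^{\circ n}$ (hence $f$) to be Latt\`es. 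This is a clean alternative: you trade the Galois-equivariance of ``being Latt\`es'' for the fact that an iterate of a non-Latt\`es map is non-Latt\`es. Both facts reduce to the orbifold characterization, so neither approach is intrinsically cheaper.

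Two small corrections in your Latt\`es analysis. First, the cyclic group $G$ can also have order $3$ (you listed only $2,4,6$); this still gives three branch values, so your conclusion $k=2$ survives. Second, and more substantively, when $k=2$ the multiplier $a$ need \emph{not} lie in $\Z$, $\Z[i]$, or $\Z[\omega]$: the lattice $\Lambda$ can have complex multiplication by any imaginary quadratic order, so $a$ is an arbitrary imaginary quadratic integer (or rational integer). What saves you is that $\lambda=\bar a/a$ then lies in the imaginary quadratic field $\Q(a)$; if $\lambda$ is an algebraic integer it has norm $\lambda\bar\lambda=1$ there, hence is a unit, and units in imaginary quadratic rings of integers are precisely the roots of unity you list. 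So your conclusion is right, but the justification should go through $\Q(a)$ rather than constraining $a$ itself. Finally, note that your normalization argument for algebraicity tacitly assumes $f$ is not flexible Latt\`es; in that excluded case $\dim\qf=1$ and $\lambda=1$, so algebraicity is trivial.
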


In \S\ref{sec:charac}, we describe a way to compute the characteristic polynomial of the operator $f_*:\qf\to \qf$. We then apply this in \S\ref{sec:unicritical} to the case where $f$ is a unicritical polynomial with periodic critical point to obtain the following. 

\begin{figure}[htbp]
\centerline{
\includegraphics[width=9cm]{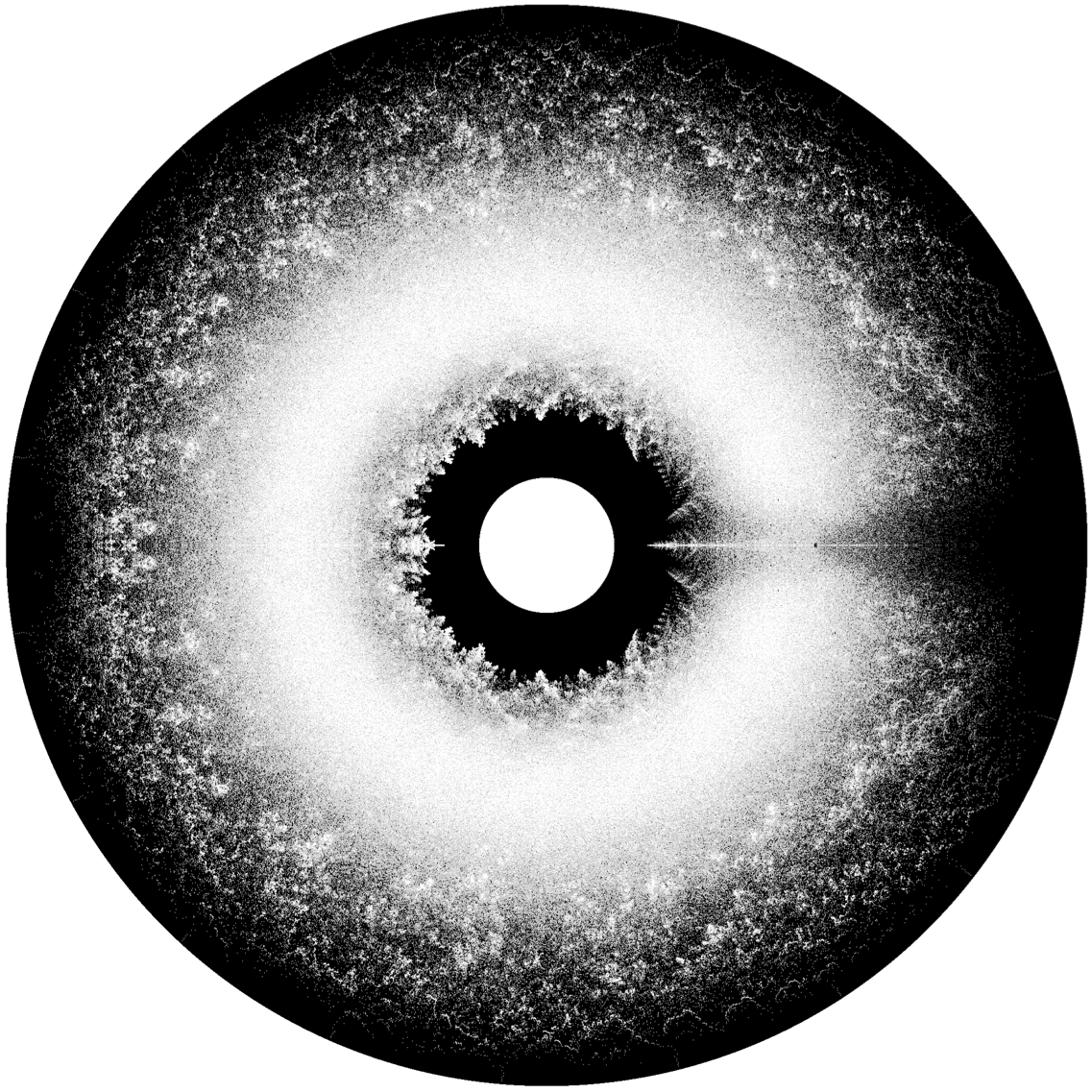}
}
\caption{The set of eigenvalues of $f_*:\qf\to \qf$ for unicritical polynomials $f$ of degree $2$ with periodic critical point, up to period $19$. The set of eigenvalues (white) is contained in the annulus $1/8<|\lambda|<1$ (black). The picture on the first page of this article shows the reciprocal of the eigenvalues (black).\label{fig:bound}}
\end{figure}

\begin{theorem}\label{theo:3}
Let $f:\C\to \C$ be a unicritical polynomial of degree $D$ with periodic critical point. 
If $\lambda$ is an eigenvalue of $f_*:\mathcal Q(\Chat)\to \mathcal Q(\Chat)$, then $D\lambda$ is an algebraic unit and $\frac{1}{4D}<|\lambda|<1$. 
\end{theorem}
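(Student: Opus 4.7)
Write $\pf = \{c_0, c_1, \dots, c_{p-1}, \infty\}$ with $c_0 = 0$, $c_1 = c$, and $c_{i+1 \bmod p} = f(c_i)$. The subspace $\qf$ is $f_*$-invariant, so every eigenvalue of $f_* \colon \cQ(\Chat) \to \cQ(\Chat)$ lies either in $\spec_f$ or in $\Lambda_f$. I treat the two cases separately, drawing on the results of the preceding sections.

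\emph{The case $\lambda \in \Lambda_f \setminus \{0\}$.} By Theorem~1, there is a cycle $\gamma = \{z_1, \dots, z_m\}$ of $f$ outside $\pf$ with $\lambda^m \rho_\gamma = 1$, where $\rho_\gamma = D^m (\prod_i z_i)^{D-1}$ is the multiplier. Since $f$ is postcritically finite with super-attracting critical cycle contained in $\pf$, it is hyperbolic on its Julia set; hence $\gamma$ is repelling, $|\rho_\gamma| > 1$, and $|\lambda| < 1$. For the lower bound, the classical escape-radius estimate (valid because $c$ lies in the Multibrot connectedness locus) gives $K_f \subset \{|z| \leq 2^{1/(D-1)}\}$, so that $|\rho_\gamma| \leq (2D)^m$ and $|\lambda| \geq 1/(2D) > 1/(4D)$. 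For the unit property, $(D\lambda)^m = (\prod_i z_i)^{-(D-1)}$; the parameter $c$ is an algebraic integer (as a root of the monic polynomial $f^{\circ p}(0) \in \Z[c]$) and so are the $z_i$ (roots of $f^{\circ m}(z) - z$), so it suffices to show $\prod_i z_i$ is a unit. The dynamical identity $(\prod_i z_i)^D = \prod_i (z_i - c_1)$, iterated around the length-$p$ postcritical orbit by way of the factorization $z^D - w^D = \prod_{\zeta^D = 1}(z - \zeta w)$, yields the required unit relation.

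\emph{The case $\lambda \in \spec_f$.} Using the framework of Section~\ref{sec:charac}, write $Df_*$ on $\qf$ as an explicit matrix in a basis adapted to $\pf$. The unicritical combinatorics are especially simple: for $i \neq 1$, the unique preimage of $c_i$ in $\pf$ is unramified, while above $c_1$ the critical point $c_0 = 0$ is the only preimage and is fully ramified of multiplicity $D$. The matrix has entries in the ring of algebraic integers generated by $c, c_1, \dots, c_{p-1}$, and a determinant computation exploiting the special ramified structure above $c_1$ yields a characteristic polynomial in $\Z[c][T]$ with constant term $\pm 1$; consequently every root $D\lambda$ is an algebraic unit. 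The upper bound $|\lambda| < 1$ follows from Thurston's lemma together with the exclusion of flexible Latt\`es maps.

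\emph{Main obstacle.} The sharp lower bound $|\lambda| > 1/(4D)$ on $\spec_f$ is the most delicate step, and does not follow from the unit constant term alone: combining $\prod |D\lambda_i| = 1$ with Thurston's $|D\lambda_i| \leq D$ only gives $|D\lambda_{\min}| \geq D^{-(p-3)}$, far weaker than required. A sharper argument must come from a norm estimate on the explicit matrix of $(Df_*)^{-1}$, the factor of $4$ reflecting the total ramification above the critical value (where all $D$ preimages collapse at $c_0$). Establishing this uniform operator-theoretic bound is the principal remaining step; the identity-chasing for $\Lambda_f$ and the determinant computation for $\spec_f$ are routine but lengthy.
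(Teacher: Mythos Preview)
Your overall architecture---splitting into $\Lambda_f$ and $\spec_f$ and treating each separately---matches the paper's. The bounds $\tfrac{1}{2D}\le|\lambda|<1$ on $\Lambda_f$ via the escape radius are exactly Proposition~\ref{prop:largeeig}. Your unit argument for $\Lambda_f$ is different from the paper's and is correct: the chain of identities you sketch does close up. Writing $P=\prod_i z_i$ and $Q_j=\prod_i(z_i-c_j)$, one gets $Q_0=P$, $Q_1=P^D$, and $Q_{j+1}=Q_jR_j$ for $j\ge 1$ with $R_j=\prod_i\prod_{\zeta^D=1,\ \zeta\neq 1}(z_i-\zeta c_j)$ an algebraic integer; since $Q_p=Q_0=P$, this yields $P^{D-1}R_1\cdots R_{p-1}=1$, so $P$ is a unit. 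The paper instead proves the stronger fact that each $z_i$ is a unit, via Poonen's resultant identity $\mathrm{resultant}(H_m,H_n)=\pm 1$ (Lemma~\ref{lem:resgleason}).

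There are, however, two genuine gaps in the $\spec_f$ part.

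\textbf{Unit property on $\spec_f$.} Your assertion that ``the matrix has entries in the ring of algebraic integers'' and that the characteristic polynomial of $Df_*$ lies in $\Z[c][T]$ with constant term $\pm 1$ is exactly the nontrivial point, not a routine computation. In the natural basis the entries of $D A_f$ are $\pm 1/c_j^{D-1}$, and the constant term of the characteristic polynomial of $Df_*$ on $\qf$ is $-\bigl(\prod_{j=1}^{p-1}c_j\bigr)^{-(D-1)}$; neither is visibly an algebraic integer. What is needed is that each $c_j=G_j(c)$ (for $1\le j\le p-1$) is a \emph{unit}, and this is precisely what the paper extracts from Poonen's lemma. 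Your dynamical-identity trick, which worked for a cycle disjoint from $\pf$, degenerates for the critical cycle because $c_0=0$ kills the product; I do not see how to repair it without something equivalent to Lemma~\ref{lem:resgleason}. The paper sidesteps the issue by working not with the characteristic polynomial of $Df_*$ itself but with its renormalization $R(c,\nu)=\chi^2_f(\nu/D)/\chi^2_f(0)\in\Z[c,\nu]$, whose constant term is~$1$ by construction and whose \emph{leading} coefficient $\Gamma_{m-1}(c)=(\prod_j c_j)^{D-1}$ is then shown to be a unit via Poonen.

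\textbf{Lower bound on $\spec_f$.} You correctly flag this as the crux but do not resolve it; your suggestion of a norm estimate on $(Df_*)^{-1}$ is in the right spirit but unexecuted. The paper's argument (Proposition~\ref{theo:gap}) is short and direct once the characteristic polynomial is in hand: writing
\[
\frac{\chi_f^2(\lambda)}{\chi_f^2(0)}=1+\sum_{k=1}^{m-1}\Delta_{-k}\lambda^k,
\qquad \Delta_{-k}=\delta_{m-1}\cdots\delta_{m-k},
\]
the same escape-radius bound you used for $\Lambda_f$ gives $|\delta_j|=D|c_j|^{D-1}\le 2D$, hence $|\Delta_{-k}\lambda^k|\le (2D)^k|\lambda|^k<2^{-k}$ whenever $|\lambda|\le 1/(4D)$, and the geometric series shows $\chi_f^2$ cannot vanish there. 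The factor of~$4$ you were looking for comes from this crude doubling, not from the ramification at $c_1$.
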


Given $D\geq 2$, let $\spec(D)$ be the set of $\lambda\in \C$ which are eigenvalues of $f_*:\qf\to \qf$ for some unicritical polynomial $f:\C\to \C$ of degree $D$ with periodic critical point. 

\begin{theorem}\label{theo:4}
The closure of $\spec(D)$ contains the annulus $r_D\leq |\lambda|\leq 1$ where $r_D$ is defined by 
\[\frac{1}{r_D}= \begin{cases}
2D&\text{if }D\text{ is even}\\
\ds 2D\cos\left(\frac{\pi}{2D}\right)&\text{if }D\text{ is odd}.
\end{cases}
\]
\end{theorem}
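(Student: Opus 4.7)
The plan is to combine the explicit characteristic-polynomial computation from \S\ref{sec:charac} with two distinct limiting constructions, one for each boundary circle of the annulus $r_D\le|\lambda|\le 1$.

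First I would apply the formula from \S\ref{sec:charac} to the unicritical case $f(z)=z^D+c$ with periodic critical orbit $0\to p_1\to\cdots\to p_{m-1}\to 0$. Because all ramification of $f$ is concentrated at the single critical point, the pushforward on the natural family of quadratic differentials $dz^2/(z(z-p_i))$ for $i=1,\ldots,m-1$ takes a very simple form, and working it out and dropping the spurious eigenvalue coming from the quotient $\cQ(\Chat)/\qf$ yields a characteristic polynomial of the shape
\[P_c(\mu)\;=\;\sum_{j=0}^{m-1}\Bigl(\prod_{k=m-j}^{m-1}p_k\Bigr)^{D-1}\mu^{j}\qquad(\mu=D\lambda)\]
whose coefficients are explicit polynomial expressions in the critical orbit. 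So what has to be shown is that, as $c$ ranges over centers of hyperbolic components of all periods, the nonunit roots of $P_c$ become dense in $\{Dr_D\le|\mu|\le D\}$.

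For the outer boundary $|\lambda|=1$, I would take sequences of centers $c_n$ converging to a parameter on the boundary of a hyperbolic component where the indifferent multiplier is a root of unity; the satellite bifurcations produce postcritically finite $c_n$ with periods $m_n\to\infty$, equipped with repelling cycles whose multipliers tend to $1$. Since Thurston's contraction principle forces $|\lambda|<1$ for each $c_n$, this construction yields eigenvalues whose modulus tends to $1$. For the inner boundary $|\lambda|=r_D$ I would take $c_n$ converging to an extremal boundary point of the multibrot set (for $D=2$ this is $c_*=-2$, whose Julia set is the interval $[-2,2]$; for general $D$, the parameter at which the critical orbit attains the maximum possible modulus in the connectedness locus). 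Applying a Cauchy-type bound to the roots of $P_c$ gives a root $\mu_n$ with $|\mu_n|\to Dr_D$; the odd/even dichotomy in the formula for $r_D$ reflects whether the coefficients of $P_c$ can be signed reals (even $D$: $D-1$ is odd, coefficients may cancel, giving the simple extremum $1/(2D)$) or must be non-negative (odd $D$: $D-1$ is even, no sign cancellations possible, with the factor $\cos(\pi/(2D))$ accounting for the maximal angular misalignment of unit contributions).

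To fill in the intermediate radii $r_D<|\lambda|<1$, I would interpolate using centers in the various limbs of the multibrot set attached at rational internal angles; as these angles vary over $\Q/\Z$, the corresponding eigenvalues of $P_c$ should trace out continuous families of arcs that together sweep out the annulus. The hard part will be showing that the limiting eigenvalue $\mu_n$ realises \emph{every} argument on each extremal circle, not merely every modulus. Controlling the modulus is comparatively easy from the coefficient bounds on $P_c$, but producing prescribed arguments will require a more delicate combinatorial argument --- most likely using the density of postcritically finite parameters whose external angles sweep out arbitrary residues modulo $D^{m_n}-1$, coupled with a continuity argument deforming postcritically finite examples through parameter space.
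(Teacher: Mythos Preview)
Your proposal identifies the right object --- the explicit characteristic polynomial $P_c$ --- and you correctly flag the hard point: producing eigenvalues of every \emph{argument} on each circle, not just every modulus. But the mechanisms you propose for doing this do not work, and the paper's approach is genuinely different.

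Your outer-boundary argument via satellite bifurcations, your inner-boundary argument via Cauchy bounds, and your ``interpolation through limbs'' for intermediate radii all suffer from the same defect: they control (at best) the modulus of \emph{some} root of $P_{c_n}$ but give no handle on arguments, and since the period $m_n$ jumps as you move between centers, there is no continuous family of eigenvalues to deform. A Cauchy bound tells you a root has modulus at most something; it does not produce a root near the inner circle, let alone a root with a prescribed argument. The combinatorial density of external angles you allude to at the end is not the right tool either: it controls where the critical orbit lands, not the arguments of the roots of $P_c$.

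The paper's device is equidistribution, and it handles modulus and argument in one stroke. The key observation is: if $c_0$ is a Misiurewicz parameter where the critical point is preperiodic to a repelling \emph{fixed point} of multiplier $\mu$, then one can choose centers $c_n\to c_0$ of periods $m_n\to\infty$ so that the roots of $\chi_{f_{c_n}}$ equidistribute on the circle $\{|\lambda|=1/|\mu|\}$. The reason is that along such a sequence the coefficients $\Delta_{-j}$ of $P_{c_n}$ behave like $\mu^j$ up to bounded multiplicative constants (since the critical orbit spends almost all its time near the linearizable fixed point), so the rescaled polynomial $P_{c_n}(z/\mu)$ and its reciprocal form normal families in $\Disk$ and outside $\overline\Disk$ respectively; a short Fourier-coefficient computation then forces the limiting root measure to be normalized Lebesgue measure on $S^1$. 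This gives density on the \emph{entire} circle $|\lambda|=1/|\mu|$ at once. To cover all radii in $[r_D,1]$ one then only needs a supply of such Misiurewicz parameters with $|\mu|$ taking every value in $[1,1/r_D]$; this is obtained from the connectedness of $\partial\M_D$ and a Montel argument. The constant $r_D$ emerges not from Cauchy bounds on $P_c$ but from an explicit Misiurewicz parameter at which $f_c^{\circ 2}(0)$ is a fixed point: one computes $\mu=D(1-\omega)$ with $\omega$ a $D$-th root of unity nearest $-1$, giving $|\mu|=2D$ for $D$ even and $|\mu|=2D\cos(\pi/2D)$ for $D$ odd.
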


The proof of Theorem \ref{theo:3} is given in \S\ref{sec:units} and  \S\ref{sec:gap}, and the proof of Theorem \ref{theo:4} is given in \S\ref{sec:quidistribution}. Finally, in \S\ref{sec:questions}, we pose some questions for further study. 

\medskip
\noindent {\bf Acknowledgments.} We would like to thank Ben Hutz, Curtis McMullen, and Bjorn Poonen for helpful conversations related to this project, and we would like to especially thank Giulio Tiozzo for many exciting discussions and pictures which helped inspire this research. And finally, we thank ICERM for their hospitality when much of this work was completed.

\section{Postcritically finite rational maps}

\noindent Fix $f\in\Rat_D$. It follows from the Riemann-Hurwitz formula that $f$ has $2D-2$ critical points counted with multiplicity, and that $f$ has at least two distinct critical values. As a consequence, $\card(\pf)\geq 2$. If $\card(\pf)=2$, then $\vf=\pf$, and $f$ is conjugate to $z\mapsto z^{\pm D}$. 
 
\subsection{Examples}
In the following two examples, $\card(\pf)=3$.  
\begin{itemize}
\item The polynomial $f:z\mapsto  1-z^D$ has critical set $\cf=\{0,\infty\}$, postcritical set $\pf=\{0,1,\infty\}$, and postcritical dynamics: 
\[
\xymatrix{\infty\ar@(ur,dr)^D}
\quad \xymatrix{0\ar@/^0.7pc/[r]^D &1\ar@/^0.7pc/[l]}
\]

\medskip

\item The rational map $f: z\mapsto 1-1/z^D$ has critical set $\cf=\{0,\infty\}$, postcritical set $\pf=\{0,1,\infty\}$, and postcritical dynamics: 
\[
\xymatrix{0\ar@/^0.7pc/[r]^D & 1\ar@/^0.7pc/[r] & \infty \ar@/^1.1pc/[ll]^D}
\]
\end{itemize}

\noindent {\bf Unicritical polynomials.} For much of this article, we will focus on polynomials $\C\to \C$ of degree $D$ which have a unique critical point; these polynomials are called {\em unicritical}. Every unicritical polynomial is affine conjugate to a polynomial of the form $f_c(z) = z^D+c$, where $z_0=0$ is the unique critical point with critical value $f_c(0)=c$. Fix an integer $m\geq 1$. In parameter space, the roots of the polynomial $G_m(c) := f_c^{\circ m}(0)$ correspond to polynomials $f_c$ for which $0$ is periodic of period dividing $m$. These maps $f_c$ are necessarily postcritically finite with postcritical set equal to 
\[\{\infty\}\;\; \cup \bigcup_{1\leq k\leq m} f_c^{\circ k}(0).\]
An argument due to Gleason shows that $G_m$ has simple roots (see Lemma \ref{lem:gleason}), so there are lots of postcritically finite polynomials. In fact, for $D=2$ the boundary of the Mandelbrot set (see Figure \ref{fig:mandels}) is contained in the closure of the set $\ds\bigcup_{m\geq 1} \{\text {roots of }G_m\}$.

\subsection{Cycles are superattracting or repelling}

Recall that the multiplier of a periodic $m$-cycle $\{x,f(x),\ldots,f^{\circ (m-1)}(x)\bigr\}$ is the eigenvalue $\lambda$ of the linear map $\D_xf^{\circ m}:\T_x\Chat\to \T_x\Chat$ (this eigenvalue does not depend on the point in the cycle). The periodic cycles of a postcritically finite rational map are either {\em superattracting}; that is, $\lambda=0$, or {\em repelling}; that is, $|\lambda|>1$. 

\subsection{Lattès maps\label{sec:lattes}}

The rational map $f:\Chat\to \Chat$ is a {\em{Latt\`es map}} if there is:
\begin{itemize}
\item a complex torus $\bE:=\C/\Lambda$, where $\Lambda\subset \C$ is a lattice of rank 2, 
\item an affine map $L:\bE\to\bE$, and
\item a finite branched cover $\Theta:\bE\to \Chat$
\end{itemize}
so that the following diagram commutes.
\[
\xymatrix{
&\bE\ar[r]^L\ar[d]_\Theta &\bE\ar[d]^\Theta\\
&\Chat\ar[r]_f &\Chat}
\]
The rational map $f$ is necessarily postcritically finite, and $\pf$ is the set of critical values of $\Theta:\bE\to \Chat$. 
In addition, $\card(\pf)\in\{3,4\}$. We shall use the following characterization of Lattès maps with four postcritical points (see \cite[\S4]{jacklattes}). 

\begin{proposition}\label{prop:caraclattes}
A postcritically finite rational map with $\card(\pf)=4$ is a Lattès map if and only if every critical point if simple (with local degree 2) and no critical point is postcritical. 
\end{proposition}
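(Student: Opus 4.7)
The plan is to reduce both implications to a computation of the Thurston orbifold signature of $f$. Recall that $\cal O_f$ has underlying space $\Chat$ and signature $\nu_f\colon\Chat\to\N\cup\{\infty\}$ given by the minimal choice satisfying $\deg_y(f)\cdot\nu_f(y)=\nu_f\bigl(f(y)\bigr)$ for every $y\in\Chat$, with $\nu_f\equiv 1$ outside $\pf$. The fact I will invoke as a black box is Thurston's classification: a postcritically finite rational map is a Latt\`es map exactly when $\cal O_f$ is Euclidean, and when $\card(\pf)=4$ the only Euclidean signature available is $(2,2,2,2)$, since the other Euclidean signatures $(2,3,6)$, $(2,4,4)$ and $(3,3,3)$ each carry only three marked points. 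Both directions of the proposition thus reduce to showing that $\nu_f\equiv 2$ on $\pf$.

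For the $(\Leftarrow)$ direction, assume $\card(\pf)=4$, every critical point is simple, and $\cf\cap\pf=\emptyset$. Then $\nu_f(c)=1$ for every $c\in\cf$, so for each critical value $v=f(c)$ the orbifold equation gives $\nu_f(v)=\deg_c(f)\cdot\nu_f(c)=2$. Because no postcritical point is critical, $\deg_y(f)=1$ for every $y\in\pf$, whence $\nu_f(y)=\nu_f\bigl(f(y)\bigr)$; that is, $\nu_f$ is constant along forward orbits inside $\pf$. Since each point of $\pf$ is $f^{\circ k}(c)$ for some $c\in\cf$ and $k\geq 1$, propagation forward from the critical values forces $\nu_f\equiv 2$ on $\pf$, so $\cal O_f$ has signature $(2,2,2,2)$ and $f$ is Latt\`es.

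For the $(\Rightarrow)$ direction, assume $f$ is a Latt\`es map with $\card(\pf)=4$. By the observation above, $\cal O_f$ has signature $(2,2,2,2)$, i.e.\ $\nu_f(x)=2$ for $x\in\pf$ and $\nu_f(y)=1$ otherwise. For any critical point $c$, the orbifold equation reads $\deg_c(f)\cdot\nu_f(c)=\nu_f\bigl(f(c)\bigr)\leq 2$. Since $\deg_c(f)\geq 2$ and $\nu_f(c)\geq 1$, both factors achieve their minimum: $\deg_c(f)=2$, so $c$ is simple, and $\nu_f(c)=1$, so $c\notin\pf$.

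The main obstacle is the black box just invoked, namely Thurston's classification of postcritically finite rational maps with parabolic orbifold as Latt\`es maps; this packages substantial rigidity theory but is standard. Once it is in hand, the argument reduces to the elementary multiplicative identity $\deg_y(f)\cdot\nu_f(y)=\nu_f\bigl(f(y)\bigr)$, exploited by orbit propagation through $\pf$ in one direction, and by the bounds $\deg_c(f)\geq 2$ and $\nu_f(c)\geq 1$ in the other.
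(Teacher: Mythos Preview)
The paper does not actually prove this proposition; it only cites Milnor's survey \cite{jacklattes}, where the argument is precisely the orbifold-signature computation you give. So in strategy your approach matches the intended one.

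There is one genuine slip. The canonical ramification function $\nu_f$ is the minimal function with $\nu_f\equiv 1$ off $\pf$ satisfying the \emph{divisibility} $\deg_y(f)\cdot\nu_f(y)\mid\nu_f\bigl(f(y)\bigr)$, not the equality you wrote. The equality cannot hold at every point, even under your hypotheses: any non-critical, non-postcritical preimage $y$ of a point of $\pf$ gives $\deg_y(f)\cdot\nu_f(y)=1$ while $\nu_f\bigl(f(y)\bigr)=2$. This matters in your $(\Leftarrow)$ direction: forward propagation from the critical values only yields $2\mid\nu_f(x)$ for $x\in\pf$, not $\nu_f(x)=2$. To complete the argument, check directly that the assignment $\nu_f\equiv 2$ on $\pf$ and $\nu_f\equiv 1$ elsewhere satisfies the divisibility condition at every point (the three cases $y\in\pf$, $y\in\cf$, and $y\notin\pf\cup\cf$ are each one line, using $\cf\cap\pf=\emptyset$ and simplicity of the critical points); minimality then gives signature $(2,2,2,2)$. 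Your $(\Rightarrow)$ direction survives unchanged, since $\deg_c(f)\cdot\nu_f(c)\mid 2$ already forces $\deg_c(f)=2$ and $\nu_f(c)=1$.
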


Latt\`es maps are either {\em{flexible}} or {\em{rigid}}. The map $f$ is flexible if 
\begin{itemize}
\item for $L$ of the form $L:w\mapsto \alpha w+\beta $, we have $\alpha \in \Z$, and 
\item the map $\Theta$ has degree $2$. 
\end{itemize}
Equivalently, $f$ is flexible if it can be {\em{deformed}}, that is, if it is part of a one-parameter isospectral family that is nontrivial \cite{ctm}. The Latt\`es map $f$ is rigid if it is not flexible. Flexible Latt\`es maps have four postcritical points (this follows from the fact that $\Theta$ has degree $2$). Rigid Latt\`es maps may have three or four postcritical points. 

\begin{example}\label{rigidlattes}
The map $f:\Chat\to \Chat$ given by $\ds f:z\mapsto {(1-2/z)^2}$ is a rigid Latt\`es map with $\pf=\{0,1,\infty\}$ and has the following postcritical dynamics. 
\[
 \xymatrix{2\ar[r]^2 & 0\ar[r]^2 & \infty \ar[r] &1\ar@(ur,dr)}
\]
\end{example}
\begin{example}\label{flexlattes}
The family 
\[
\{f_t:\Chat\to\Chat\}_{t\in \C\sm \{0,1\}}\quad\text{given by}\quad  f_t:z\mapsto \frac{(z^2-t)^2}{4z(z-1)(z-t)}
\]
consists entirely of flexible Latt\`es maps. The postcritical set of $f_t$ is $\{0,1,t,\infty\}$, and $f_t$ has the following postcritical dynamics. 
\[
\xymatrix{
&\ast\ar[rd]^2 & &\ast\ar[ld]_2\\
\ast\ar[rd]^2 & &0\ar[d] & &\ast\ar[ld]_2\\
&1\ar[r] &\infty\ar@(dl,dr) &t\ar[l]\\
\ast\ar[ru]^2 & & & &  \ast\ar[lu]_2}
\]

\end{example}

\section{Quadratic differentials\label{QDs}}

\noindent Let $U$ be a Riemann surface. A  {\em  quadratic  differential} on $U$ is a section of the square of the cotangent bundle $\T^*U\otimes \T^*U$. We shall usually think of a quadratic differential $\q$ as a field of quadratic forms. In particular, if $\btheta$ is a vector field on $U$ and $\phi$ is a function on $U$, then $\q(\btheta)$ is a function on $U$ and $\q(\phi\btheta) = \phi^2\q(\btheta)$. 


If $\zeta:U\to \C$ is a coordinate, we shall use the notation $(\d\zeta)^2 = \d\zeta\otimes \d\zeta$ (not be confused with $1$-form $\d(\zeta^2)$).  
On $U$ (whose complex dimension is $1$), the  ratio of two quadratic differentials is a function.  In other words, any quadratic differential $\q$ on $U$ may be written as 
\[\q  = q\ (\d\zeta)^2\quad\text{for some function }q.\]

\subsection{Meromorphic quadratic differentials}
A quadratic differential $\q$ on $\Chat$ is meromorphic if $\q=q \ (\dz)^2$ for some meromorphic function $q:\Chat\to \Chat$. 
The quadratic differential $(\dz)^2$ has no zero and has a pole of order $4$ at $\infty$. Since the number of zeros of the function $q$ equals the number of poles of $q$, 
counting multiplicities, the number of poles minus the number of zeros of $\q$ is equal to four. In particular, $\q$  has at least four poles (counting multiplicities). 

Let  $ \cQ (\Chat)$  be the set of
meromorphic quadratic differentials with only simple poles.  
For $X\subset \Chat$, let $\cQ(\Chat;X)\subset \cQ(\Chat)$ be the subset of quadratic differentials whose poles are contained in $X$.  
For $k\geq 0$, let $\cQ_k(\C)$ be the set of meromorphic quadratic differentials whose poles in $\C$ are all simple and which have at worst a pole of order $k$ at $\infty$. 

\begin{example} The quadratic differential $(\dz)^2$ belongs to $\cQ_4(\C)$ and for any $x\in \C$, the quadratic differential 
$\ds \frac{(\dz)^2}{z-x}$ belongs to $\cQ_3(\C)\subset \cQ_4(\C)$. 
\end{example}

\subsection{Pullback}
The derivative $\D f : \T U \to \T V$ of a holomorphic map $f:U\to V$ naturally induces a pullback map $f^*$ from quadratic differentials on $V$ to quadratic differentials on $U$: 
\[f^*\q := \q\circ {\mathrm D}f.\]

\begin{lemma}\label{ordlemmapull}
Let $f:(U,x)\to (V,y)$ be holomorphic  at $x$, and let $q$ be meromorphic at $y=f(x)$. We have  
\[2+\ord_x (f^*\q) = \deg_x f\cdot (2+\ord_y\q).\]
\end{lemma}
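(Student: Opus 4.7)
The plan is to reduce the statement to a local coordinate computation after normalizing both $f$ and $\q$ by choosing adapted charts at $x$ and $y$.

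First, I would pick a local coordinate $w$ centered at $y$ so that, writing $k:=\ord_y\q$, the differential has the form $\q = w^k\,g(w)\,(\d w)^2$ with $g$ holomorphic and nonvanishing at $0$. This is just the definition of the order of a meromorphic quadratic differential: the factor $(\d w)^2$ contributes nothing to the order, and the remaining function has order exactly $k$ at $w=0$. Next, I would invoke the standard local normal form for a holomorphic map at a point of local degree $d:=\deg_xf$: there exists a local coordinate $z$ centered at $x$ such that $f$ is expressed as $w = z^d$. (This is just the local factorization $f(z) = u(z)^d$ with $u$ a biholomorphism, absorbed into the coordinate.)

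Second, I would compute $f^*\q$ directly in these coordinates. From $w=z^d$ one gets $\d w = d\,z^{d-1}\,\d z$, hence
\[
f^*\q \;=\; (z^d)^k\, g(z^d)\,\bigl(d\,z^{d-1}\,\d z\bigr)^2 \;=\; d^2\, z^{kd+2(d-1)}\, g(z^d)\,(\d z)^2.
\]
Since $g(z^d)$ is holomorphic and nonzero at $z=0$, the order of $f^*\q$ at $x$ is exactly the exponent of $z$, namely
\[
\ord_x(f^*\q) \;=\; kd + 2(d-1) \;=\; d(k+2) - 2.
\]
Adding $2$ to both sides yields $2+\ord_x(f^*\q) = d\,(2+\ord_y\q)$, which is the claimed identity.

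There is no real obstacle here beyond justifying the two local normalizations. The only thing to be a touch careful about is that the formula holds for any integer $k$ (positive, negative, or zero), i.e.\ that the coordinate form $\q = w^k g(w)(\d w)^2$ is available whether $y$ is a pole, a zero, or a regular point of $\q$; this follows from the fact that meromorphic sections of $\T^*V\otimes\T^*V$ have a well-defined order given in any chart by the order of the coefficient function. Once that is said, the computation above is essentially the entire proof.
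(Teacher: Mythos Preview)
Your proof is correct. Both your argument and the paper's are local coordinate computations, but they differ in one technical choice. You invoke the local normal form $w=z^d$ for a degree-$d$ holomorphic map and then compute $f^*\q$ directly; this is the most elementary route and makes the exponent count transparent. The paper instead works with \emph{arbitrary} coordinates $\zeta_x,\zeta_y$ at $x,y$, writes $\zeta_y\circ f=\zeta_x^\delta\cdot\phi$ with $\phi$ a nonvanishing unit, and observes that $f^*(\d\zeta_y/\zeta_y)=\delta\,\d\zeta_x/\zeta_x+\d\phi/\phi$ still has a simple pole, hence $f^*\bigl((\d\zeta_y)^2/\zeta_y^2\bigr)$ has order $-2$; writing $\q=\psi\cdot(\d\zeta_y)^2/\zeta_y^2$ then reduces the identity to $\ord_x(\psi\circ f)=\deg_xf\cdot\ord_y\psi$. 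Your approach trades the logarithmic-differential trick for the (equally standard) existence of the normal form; either way the content is the same.
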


\begin{proof}
Let  $\zeta_x:(U,x)\to (\C,0)$  and $\zeta_y:(V,y)\to (\C,0)$ be local coordinates vanishing at respectively $x$ and $y$.  
Set $\delta:=\deg_x f$ so that 
\[\zeta_y \circ f = \zeta_x^\delta\cdot \phi\]
for some function $\phi:U\to \C$ which is holomorphic and does not vanish at $x$. 
Then, 
\[ f^*\left(\frac{\d\zeta_y}{\zeta_y}\right) = \frac{\d(\zeta_y\circ f)}{\zeta_y\circ f} = \delta\frac{\d\zeta_x}{\zeta_x} + \frac{\d \phi}{\phi}.\]
Since $\d\phi/\phi$ is holomorphic near $x$, $f^*(\d\zeta_y/\zeta_y) $ has a simple pole at $x$. It follows that 
\[\ord_x  f^*\bigl((\d\zeta_y)^2/\zeta_y^2\bigr) = \ord_x \bigl(f^*(\d\zeta_y/\zeta_y)\bigr)^2 = 2\,\ord_x f^*(\d\zeta_y/\zeta_y) = -2.\]
Now, if $\q = \psi\cdot (\d\zeta_y)^2/\zeta_y^2$, then $f^*\q = (\psi\circ f)\cdot f^*\bigl((\d\zeta_y)^2/\zeta_y^2\bigr)$, so that 
\[2+\ord _x f^*\q = \ord_x(\psi\circ f) = \deg_x f\cdot \ord_y \psi =  \deg_x f\cdot (2+\ord_y\q).\qedhere\]
\end{proof}

\subsection{The Thurston pushforward operator}

If $f:U\to V$ is a covering map and $\q$ is a quadratic differential on $U$, then we can define a quadratic differential $f_*\q$ on $V$ by
\[f_*\q := \sum_{g \text{ inverse branch of }f} g^* \q.\]
If  $ f: \Chat \to \Chat $ is a nonconstant rational map and $\q=q\ (\dz)^2$ is a meromorphic quadratic differential on $\Chat$,  then the quadratic differential $f_*\q$, which is a priori defined on $\Chat\sm\vf$, is globally meromorphic on $\Chat$, and 
\[ f_* \q: = r\ (\dz)^ 2\quad \text{with}\quad  r (y) := \sum_{x \in f^{-1}(y)} \frac{q (x)}{f '(x) ^ 2}.\]

\begin{lemma}\label{ordlemmapush}
Let $\q$ be a meromorphic quadratic differential on $\Chat$, and let $f:\Chat\to\Chat$ be a rational map. For all $y\in \Chat$, we have
\[
2+\ord_{y} (f_*\q)  \geq \min_{x\in f^{-1}(y)} \frac{2+\ord_{x} \q}{\deg_x f}.
\]
\end{lemma}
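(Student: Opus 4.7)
\medskip
\noindent\textbf{Proof plan.} Fix $y\in \Chat$, enumerate its preimages $x_1,\ldots,x_k$, and set $d_i:=\deg_{x_i}f$. The plan is to reduce the inequality to a local computation at each $x_i$, working in coordinates that normalize $f$ to a pure power.  Concretely, I would choose a coordinate $w$ vanishing at $y$ and, for each $i$, a coordinate $\zeta_i$ vanishing at $x_i$ in which
\[w\circ f = \zeta_i^{d_i}.\]
Writing $\q = q_i(\zeta_i)\,(\d\zeta_i)^2$ locally near $x_i$, so that $n_i := \ord_0 q_i = \ord_{x_i}\q$, the bound to prove becomes
\[ 2+\ord_y(f_*\q)\ \geq\ \min_i \frac{2+n_i}{d_i}.\]

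\medskip
\noindent The next step is to compute the contribution of each $x_i$ to $f_*\q$ via local inverse branches. Near $y$ (on a slit disk) the $d_i$ local inverse branches of $f$ above $x_i$ are parametrized by the $d_i$-th roots of unity $\omega$ via $g_\omega(w):=\omega w^{1/d_i}$. A direct substitution yields
\[g_\omega^*\q\ =\ q_i(\omega w^{1/d_i})\cdot \frac{\omega^2}{d_i^2}\,w^{2/d_i-2}\,(\d w)^2.\]
Expanding $q_i$ as a Laurent series and summing over $\omega^{d_i}=1$ kills every monomial $w^{m/d_i}$ with $d_i\nmid m$, so only integer powers of $w$ survive; this is precisely the mechanism that makes $f_*\q$ globally meromorphic on $\Chat$. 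The surviving exponents of $w$ in the $(\d w)^2$-coefficient are of the form $(n+2)/d_i-2$ with $n\geq n_i$ and $d_i\mid(n+2)$, all bounded below by $(2+n_i)/d_i-2$. This gives
\[ 2+\ord_y\!\left(\sum_{\omega^{d_i}=1} g_\omega^*\q\right)\ \geq\ \frac{2+n_i}{d_i},\]
which can be viewed as the natural extension of Lemma \ref{ordlemmapull} to multi-valued inverse branches at a critical value.

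\medskip
\noindent Finally, since $f_*\q$ is the sum over $i=1,\dots,k$ of the local contributions just estimated, the order of a sum is at least the minimum of the orders, so
\[ \ord_y(f_*\q)\ \geq\ \min_i\left(\frac{2+n_i}{d_i}-2\right),\]
which is the claimed inequality. The only non-obvious point is the multi-valued bookkeeping in step two, namely verifying that the fractional-power terms cancel after summing over the $d_i$-th roots of unity; this is the key structural fact that lets us imitate Lemma \ref{ordlemmapull} at a critical value and is where I would spend most of the care in writing up the proof.
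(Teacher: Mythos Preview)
Your argument is correct and follows the same overall architecture as the paper's proof: decompose $f_*\q$ near $y$ as a sum of local pushforwards $(f_{x_i})_*\q$, bound the order of each summand from below by $(2+\ord_{x_i}\q)/d_i - 2$, and then use that the order of a sum is at least the minimum of the orders.

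The difference is purely in how the local estimate is obtained. You pass to normal-form coordinates $w\circ f=\zeta_i^{d_i}$, write out the $d_i$ inverse branches explicitly as $w\mapsto\omega w^{1/d_i}$, expand $q_i$ as a Laurent series, and observe that summing over $\omega^{d_i}=1$ annihilates the fractional powers. The paper instead stays coordinate-free: it introduces the cyclic deck group $\Gamma_{x_i}$ of $f_{x_i}:U_{x_i}\to V$, notes the identity $f_{x_i}^*\bigl((f_{x_i})_*\q\bigr)=\sum_{\gamma\in\Gamma_{x_i}}\gamma^*\q$, uses that each $\gamma$ fixes $x_i$ so $\ord_{x_i}(\gamma^*\q)=\ord_{x_i}\q$, and then applies Lemma~\ref{ordlemmapull} to convert the order at $x_i$ of the pullback into the order at $y$ of the pushforward. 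In your chosen coordinates the elements of $\Gamma_{x_i}$ are exactly the rotations $\zeta_i\mapsto\omega\zeta_i$, so your roots-of-unity cancellation is precisely the deck-group averaging, and your Laurent-series bound is the content of Lemma~\ref{ordlemmapull} re-derived in normal form. The paper's packaging is a bit slicker since it recycles Lemma~\ref{ordlemmapull} rather than repeating the underlying computation, but your explicit version has the virtue of making the mechanism (why only integer powers of $w$ survive) completely transparent.
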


\begin{proof}
Let $V\subset \Chat$ be a topological disk containing $y$, such that $\vf\cap V\subseteq\{y\}$. For each $x\in f^{-1}(y)$, let $U_x$ be the connected component of $f^{-1}(V)$ containing $x$ and denote by $f_x:U_x\to V$ the restriction of $f$ to $U_x$. Note that $U_x$ is a topological disk since $f_x:U_x\to V$ is ramified at most above $y$. Then, 
\[f_*\q = \sum_{x\in f^{-1}(y)} \q_x \quad\text{with}\quad \q_x:= (f_x)_* \q.\]
In particular,  
\[\ord_y(f_*\q) \geq \min_{x\in f^{-1}(y)} \ord_y \q_x.\]
Let $\Gamma_x$ be the group of automorphisms of $U_x$ lifting the identity via $f_x:U_x\to V$. Those automorphisms fix $x$ and $\Gamma_x$ is a cyclic group of order $\deg_x f$. 
Observe that  
\[f_x^*\q_x = \sum_{\gamma \in \Gamma_x} \gamma^* \q.\]
In addition, since all $\gamma\in \Gamma_x$ are automorphisms fixing $x$, we have
\[\ord_x (\gamma^*\q )= \ord_x \q.\]
According to Lemma \ref{ordlemmapull},
\[\deg_x f\cdot \bigl(2+\ord_y(\q_x)\bigr) = 2+\ord_x (f_x^* \q_x) \geq 2+\min_{\gamma\in \Gamma_x}\ord_x(\gamma^* \q) =2+ \ord_x \q.\]
We therefore have
\[2+\ord_y(f_*\q) \geq  \min_{x\in f^{-1}(y)} (2+ \ord_y \q_x)\geq \min_{x\in f^{-1}(y)} \frac{2+ \ord_x \q}{\deg_x f}.\qedhere\]
\end{proof}

\begin{corollary}\label{coro1}
Let $f:\Chat\to \Chat$ be a rational map, let $\q$ be a meromorphic quadratic differential on $\Chat$, and let $X$ be the set of poles of $\q$. The set of poles of $f_*\q$ is contained in $f(X)\cup \vf$. 
\end{corollary}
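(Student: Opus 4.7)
The plan is to argue by contrapositive: I want to show that if $y\in \Chat$ satisfies $y\notin f(X)\cup \vf$, then $\ord_y(f_*\q)\geq 0$, so that $y$ is not a pole of $f_*\q$.

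The key input is Lemma \ref{ordlemmapush}, which gives
\[2+\ord_y(f_*\q) \geq \min_{x\in f^{-1}(y)} \frac{2+\ord_x\q}{\deg_x f}.\]
I would then exploit the two hypotheses separately. Since $y\notin \vf$, the map $f$ is unramified over $y$, so every preimage $x\in f^{-1}(y)$ satisfies $\deg_x f=1$. Since $y\notin f(X)$, no preimage $x$ of $y$ lies in $X$, so $\ord_x\q\geq 0$ for every $x\in f^{-1}(y)$. Plugging this into the displayed inequality yields
\[2+\ord_y(f_*\q) \geq \min_{x\in f^{-1}(y)} (2+\ord_x\q)\geq 2,\]
hence $\ord_y(f_*\q)\geq 0$ as desired.

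There is essentially no obstacle here: the corollary is a direct unpacking of Lemma \ref{ordlemmapush} once one observes that the two conditions $y\notin \vf$ and $y\notin f(X)$ precisely kill the two ways that the right-hand side of the order estimate could become negative (namely, ramification over $y$ and poles of $\q$ among the preimages). The only minor care required is to keep track of the convention that a pole corresponds to a strictly negative order and a regular point to a nonnegative order, so that ``$y$ is not a pole of $f_*\q$'' is the same statement as ``$\ord_y(f_*\q)\geq 0$''.
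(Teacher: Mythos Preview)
Your proof is correct and follows essentially the same approach as the paper: both arguments invoke Lemma \ref{ordlemmapush} and use that $y\notin \vf$ forces $\deg_x f=1$ while $y\notin f(X)$ forces $\ord_x\q\geq 0$ for all preimages $x$. The only cosmetic difference is that the paper argues directly (assume $f_*\q$ has a pole at $y$, derive $y\in f(X)\cup\vf$) while you phrase it as the contrapositive.
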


\begin{proof}
If $f_*\q$ has a pole at $y\in \Chat$, then there is an $x\in f^{-1}(y)$ such that 
\[\frac{2+\ord_x \q}{\deg_x f} \leq 2+\ord_y(f_*\q)\leq 1.\]
Either $\deg_x f = 1$ and in that case, $\ord_x \q\leq 1-2 = -1$, so that $x\in X$ and $y\in f(X)$. 
Or $\deg_x f\geq 2$ and $y\in \vf$. 
\end{proof}

\begin{corollary}\label{coro2}
Let $f:\Chat\to \Chat$ be a rational map. If $\q\in \cQ(\Chat)$, then  $f_*\q\in \cQ(\Chat)$.
\end{corollary}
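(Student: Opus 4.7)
The plan is to deduce this directly from Lemma \ref{ordlemmapush} combined with Corollary \ref{coro1}. First, Corollary \ref{coro1} already tells us that the set of poles of $f_*\q$ is contained in the finite set $f(X) \cup \vf$, so $f_*\q$ is a meromorphic quadratic differential. What remains is to verify that every pole is simple, i.e.\ that $\ord_y(f_*\q) \geq -1$ for every $y \in \Chat$.

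The key observation is that $\q \in \cQ(\Chat)$ means $\ord_x \q \geq -1$ at every $x \in \Chat$ (since poles are all simple), so $2 + \ord_x \q \geq 1$ everywhere. Applying Lemma \ref{ordlemmapush}, we get
\[2 + \ord_y(f_*\q) \geq \min_{x \in f^{-1}(y)} \frac{2 + \ord_x \q}{\deg_x f} \geq \min_{x \in f^{-1}(y)} \frac{1}{\deg_x f} > 0.\]
Since $\ord_y(f_*\q)$ is an integer, a strictly positive lower bound for $2 + \ord_y(f_*\q)$ forces $2 + \ord_y(f_*\q) \geq 1$, i.e.\ $\ord_y(f_*\q) \geq -1$.

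There is essentially no obstacle here: the entire proof is a one-line invocation of the order inequality, where the point is that simple poles of $\q$ contribute a quantity $2 + \ord_x \q = 1$ which, when divided by the positive integer $\deg_x f$, yields a strictly positive rational number, and strict positivity is enough to conclude integrality of order $\geq -1$ at each $y$. Once this is noted, combining with Corollary \ref{coro1} completes the proof.
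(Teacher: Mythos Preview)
Your proof is correct and follows essentially the same approach as the paper: apply Lemma~\ref{ordlemmapush} to get $2+\ord_y(f_*\q)>0$, then use integrality to conclude $\ord_y(f_*\q)\geq -1$. The only difference is that you additionally invoke Corollary~\ref{coro1} to control the pole set, which is harmless but unnecessary---the order estimate at every $y$ already shows all poles are simple, so meromorphicity (established prior to these corollaries) plus the pointwise bound suffices.
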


\begin{proof}
If $\q\in \cQ(\Chat)$, then for all $y\in \Chat$, we have 
\[\ord_y (f_*\q) \geq -2+ \min_{x\in f^{-1}(y)} \frac{2+\ord_{x} \q}{\deg_x f}>-2\quad\text{since}\quad 2+\ord_x \q\geq 2-1>0.\]
Since $\ord_y (f_*\q)$ is an integer, it is at least equal to $-1$. 
Therefore, $f_*\q$ has at worst a simple pole at $y$. 
\end{proof}

\begin{corollary}\label{coro3}
Let $f:\Chat\to \Chat$ be a rational map fixing $\infty$. If  $\q\in \cQ_k(\C)$ for some $k\geq 0$, then $f_*\q\in \cQ_k(\C)$. 
\end{corollary}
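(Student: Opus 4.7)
The plan is to verify the two defining properties of $\cQ_k(\C)$ for $f_*\q$: every pole of $f_*\q$ lying in $\C$ is simple, and $\ord_\infty(f_*\q) \geq -k$. Both will come from Lemma \ref{ordlemmapush} applied at the appropriate base point, together with the integrality of the order function.

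For the first property, I would fix $y \in \C$. Because $f$ fixes $\infty$, every $x \in f^{-1}(y)$ lies in $\C$, where $\q$ has at worst a simple pole, so $2 + \ord_x \q \geq 1$. The argument from the proof of Corollary \ref{coro2} then applies verbatim: Lemma \ref{ordlemmapush} gives $2 + \ord_y(f_*\q) \geq \min_{x} 1/\deg_x f > 0$, and integrality of $\ord_y(f_*\q)$ forces $\ord_y(f_*\q) \geq -1$. Hence any pole of $f_*\q$ at a point of $\C$ is simple.

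For the second property, which is the main step, I would apply Lemma \ref{ordlemmapush} at $y = \infty$. The preimages of $\infty$ are $\infty$ itself, with local degree $D' := \deg_\infty f$, together with the poles $p_1, \ldots, p_m$ of $f$ lying in $\C$, with local degrees $d_j := \deg_{p_j} f$. At each $p_j$ the simple-pole hypothesis on $\q$ in $\C$ gives $(2+\ord_{p_j}\q)/d_j \geq 1/d_j$, and at $\infty$ the hypothesis $\q \in \cQ_k(\C)$ gives $(2+\ord_\infty\q)/D' \geq (2-k)/D'$. Lemma \ref{ordlemmapush} then yields
\[ 2 + \ord_\infty(f_*\q) \;\geq\; \min\Bigl(\tfrac{2-k}{D'},\ \min_j \tfrac{1}{d_j}\Bigr). \]

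The main obstacle is to pass from this fractional inequality to the integer bound $\ord_\infty(f_*\q) \geq -k$. For $k \geq 2$ this is automatic, since $(2-k)/D' \geq 2-k$ (as $D' \geq 1$ and $2-k \leq 0$) and $1/d_j \geq 2 - k$ (trivially, the right side being non-positive). For smaller $k$ the right-hand side of the displayed inequality is strictly positive, so one concludes via the integrality of $\ord_\infty(f_*\q)$, exactly as in Corollary \ref{coro2}. The crucial inputs are the hypothesis $f(\infty) = \infty$, which rules out any preimages of $\infty$ other than the listed ones, and the simple-pole condition on $\q$ at the finite preimages $p_j$.
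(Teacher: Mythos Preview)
Your argument follows the paper's approach---apply Lemma~\ref{ordlemmapush} at finite $y$ and then at $y=\infty$---but is more careful: you keep track of the finite preimages of $\infty$ (which the paper's displayed chain of inequalities suppresses) and you split into the regimes $k\geq 2$ versus $k\leq 1$. For $k\geq 1$ your proof is complete and correct.

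There is, however, a genuine gap at $k=0$. From ``the right-hand side is strictly positive'' integrality gives only $2+\ord_\infty(f_*\q)\geq 1$, i.e.\ $\ord_\infty(f_*\q)\geq -1$; this matches $-k$ when $k=1$ but not when $k=0$. This gap cannot be closed, because the statement as written is in fact false for $k=0$ whenever $\deg_\infty f\geq 2$. Take $f(z)=z^2$ and
\[
\q=\frac{(\dz)^2}{(z-1)(z-2)(z-3)(z-4)}\in\cQ_0(\C);
\]
a direct computation shows $f_*\q\sim\dfrac{(\dz)^2}{2z^3}$ near $\infty$, hence $\ord_\infty(f_*\q)=-1$ and $f_*\q\notin\cQ_0(\C)$. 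The paper's own middle inequality $\frac{2+\ord_\infty\q}{\deg_\infty f}\geq\frac{2-k}{1}$ breaks at the same place (here it would read $1\geq 2$). Since the corollary is only invoked in the paper for $k\geq 1$, the defect is cosmetic; but you should either restrict the statement to $k\geq 1$, or add the hypothesis $\deg_\infty f=1$ for the case $k=0$.
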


\begin{proof}
As in Lemma \ref{ordlemmapush}, if $y\in \C$, then $f_*\q$ has at worst a simple pole at $y$, and we have 
\[\ord_\infty (f_*\q)\geq -2 + \frac{2+ \ord_\infty \q}{\deg_\infty f} \geq -2+\frac{2-k}{1}\geq -k.\qedhere\]
\end{proof}

In particular, if $f:\Chat\to \Chat$ is a rational map, then $f_*$ is an endomorphism of $\cQ(\Chat)$, and when $f$ fixes $\infty$ (for example when $f$ is a polynomial), $f_*$ restricts to an endomorphism of $\cQ_k(\C)$ for all $k\geq 0$. 

\subsection{Transposition}

If $\q$ is a quadratic differential on $U$ and $\btheta$ is a vector field on $U$, we may consider the $1$-form $\q\otimes \btheta$ defined on $U$ by its action on vector fields $\bxi$:
\[\q\otimes\btheta(\bxi) = \frac{1}{4}\bigl(\q(\btheta+\bxi)-\q(\btheta-\bxi)\bigr).\]
If 
\[\q = q\ (\dz)^2\quad \text{and}\quad \btheta=\theta\ \frac{\d}{\dz},\quad\text{then}\quad \q\otimes \btheta = q \theta \ \dz.\]

We shall use the following lemma which, in some sense, asserts that the transpose of pushing forward a quadratic differential is pulling back a vector field. 

\begin{lemma}\label{lem:pushpull}
Let $f:\Chat\to\Chat$ be a rational map, let $\btheta$ be a meromorphic vector field on $\Chat$, and let $\q$ be a meromorphic quadratic differential on $\Chat$. Then 
\[
\res\bigl((f_*\q)\otimes \btheta,y\bigr)\;\; =\sum_{x\in f^{-1}(y)}\res\bigl(\q\otimes f^*\btheta,x\bigr).
\]
\end{lemma}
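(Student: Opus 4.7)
The plan is to derive the identity from a \emph{projection formula}
\[(f_*\q)\otimes \btheta = f_*\bigl(\q\otimes f^*\btheta\bigr)\]
combined with the classical residue formula for pushforwards of meromorphic $1$-forms, which asserts that for any nonconstant rational map $h:\Chat\to\Chat$ and any meromorphic $1$-form $\omega$ on $\Chat$, the pushforward $h_*\omega$ (defined away from the critical values of $h$ by $h_*\omega=\sum_g g^*\omega$ over local inverse branches $g$, and extended meromorphically across the critical values) satisfies
\[\res_y(h_*\omega)=\sum_{x\in h^{-1}(y)}\res_x\omega.\]
The residue formula follows from $\oint_\gamma h_*\omega=\oint_{h^{-1}(\gamma)}\omega$ for a small loop $\gamma$ around $y$, since $h^{-1}(\gamma)$ is a disjoint union of small loops winding around the preimages of $y$.

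For the projection formula, I would argue pointwise on $\Chat\sm \vf$. Any local inverse branch $g$ of $f$ satisfies $f\circ g=\id$, so $g^*(f^*\btheta)=(f\circ g)^*\btheta=\btheta$, giving
\[g^*(\q\otimes f^*\btheta)=(g^*\q)\otimes g^*(f^*\btheta)=(g^*\q)\otimes \btheta.\]
Summing over the inverse branches yields $f_*(\q\otimes f^*\btheta)=(f_*\q)\otimes \btheta$ off of $\vf$. Both sides are globally meromorphic $1$-forms on $\Chat$---the left side because $\q\otimes f^*\btheta$ is meromorphic on $\Chat$ and $f$ is proper, the right side because $f_*\q\in \cQ(\Chat)$ by Corollary~\ref{coro2}---so the identity theorem extends the equality to all of $\Chat$.

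Combining the two ingredients, applying $\res_y$ to the projection formula gives
\[\res_y\bigl((f_*\q)\otimes \btheta\bigr)=\res_y\bigl(f_*(\q\otimes f^*\btheta)\bigr)=\sum_{x\in f^{-1}(y)}\res_x\bigl(\q\otimes f^*\btheta\bigr),\]
which is the desired identity. The step requiring the most care is the meromorphic extension of $h_*\omega$ across the critical values of $h$; this can either be invoked as a classical fact about proper holomorphic maps of Riemann surfaces, or verified directly in normal-form coordinates $w\circ f=\zeta^{\deg_x f}$ near each $x\in f^{-1}(y)$, where the sum over the $\deg_x f$ sheets produces an explicit meromorphic $1$-form in $w$.
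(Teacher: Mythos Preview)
Your argument is correct and follows the same contour-integral idea as the paper's one-line proof, just repackaged as a projection formula $(f_*\q)\otimes\btheta = f_*(\q\otimes f^*\btheta)$ plus the residue formula for $f_*$ of a $1$-form. One minor inaccuracy: you cite Corollary~\ref{coro2} for the meromorphy of $f_*\q$, but the lemma's hypothesis is only that $\q$ is meromorphic (not that $\q\in\cQ(\Chat)$); the relevant fact is the more elementary observation from \S\ref{QDs} that the pushforward of any meromorphic quadratic differential extends meromorphically across $\vf$.
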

\begin{proof}
Let $\gamma$ be a small loop around $y$ with basepoint $a$. Then 
\[
\int_\gamma (f_* \q) \otimes \btheta=\sum_g \int_{\gamma\sm\{a\}} (g^*\q)\otimes \btheta=\sum_g\int_{g(\gamma\sm\{a\})} \q \otimes f^*\btheta=\int_{f^{-1}(\gamma)} \q\otimes f^*\btheta,
\]
where the sum ranges over the inverse branches $g$ of $f$ defined on $\gamma\sm\{a\}$. 
\end{proof}

\section{The contraction principle}\label{sec:contraction}

\noindent We shall now present a transcendental argument based on the fact that quadratic differentials in $\cQ(\Chat)$ are integrable. 

If $\q$ is a quadratic differential on $U$, we denote by $|\q|$ the positive $(1,1)$-form on $U$ defined by 
\[|\q|(\btheta_1,\btheta_2) := \frac{1}{2} \bigl|\q(\btheta_1-{\rm i}\btheta_2)\bigr| - \frac{1}{2} \bigl|\q(\btheta_1+{\rm i}\btheta_2)\bigr|.\]
If $\q = q\ (\d\zeta)^2$, then 
\[|\q| = |q|\cdot  \frac{\rm i}{2} \d\zeta\wedge \d\bar\zeta.\]
We shall say that $\q$ is {\em integrable} on $U$ if 
\[\|\q\|_{L^1(U)}:=\int_U |\q|<\infty.\]
Note that $\q$ is integrable in a neighborhood of a pole if and only if the pole is simple.

The following results due to Thurston will be crucial for our purposes. 

\begin{lemma}[Contraction Principle]
Let $f:U\to V$ be a covering map and let $\q$ be an integrable quadratic differential on $U$. Then, 
\[\|f_*\q\|_{L^1(V)}\leq \|\q\|_{L^1(U)}\]
and equality holds if and only if $f^*(f_*\q)=\phi \ \q$
with $\phi:U\to [0,+\infty)$
a real and positive function. 
\end{lemma}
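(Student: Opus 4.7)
The strategy is to reduce the global $L^1$ inequality to a pointwise triangle inequality at each $y\in V$. Since $f:U\to V$ is a covering, every $y_0\in V$ has a neighborhood $W$ evenly covered by sheets $W_x\subset U$, one for each $x\in f^{-1}(y_0)$. On $W$ I would write
\[f_*\q = \sum_{x\in f^{-1}(y_0)} g_x^*\q,\]
where $g_x:W\to W_x$ is the local inverse branch of $f$ sending $y_0$ to $x$. Working in a local coordinate $\zeta$ near $y_0$ and writing $g_x^*\q = c_x(y)\,(\d\zeta)^2$, the fact that at each $y\in W$ the summands live in a one-complex-dimensional fiber yields the pointwise triangle inequality
\[|f_*\q|(y) = \Bigl|\sum_x c_x(y)\Bigr|\cdot \frac{\rm i}{2}\d\zeta\wedge \d\bar\zeta \leq \sum_x |c_x(y)|\cdot \frac{\rm i}{2}\d\zeta\wedge \d\bar\zeta = \sum_x |g_x^*\q|(y).\]

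To obtain the global inequality, I would cover $V$ by countably many such trivializing neighborhoods, use a partition of unity, and apply the change of variables $\int_W |g_x^*\q| = \int_{W_x}|\q|$ (valid since each $g_x$ is a biholomorphism onto $W_x$). Summing over sheets reassembles the right-hand side into $\int_U |\q|$, each point of $U$ being counted exactly once, whence $\|f_*\q\|_{L^1(V)}\leq \|\q\|_{L^1(U)}$.

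For the equality case, I would first observe that pointwise equality in the scalar triangle inequality $\bigl|\sum_x c_x(y)\bigr| = \sum_x |c_x(y)|$ holds if and only if all the nonzero $c_x(y)$ share a common argument, equivalently $g_x^*\q(y) = \mu_x(y)\,f_*\q(y)$ with $\mu_x(y) \geq 0$. Pulling this back by $f$ at $x_0 \in f^{-1}(y)$, and using that $g_{x_0}\circ f = \id$ in a neighborhood of $x_0$ so that $f^*(g_{x_0}^*\q) = \q$ there, yields $\q(x_0) = \mu_{x_0}(y)\,f^*(f_*\q)(x_0)$, hence $f^*(f_*\q) = \phi\cdot \q$ with $\phi = 1/\mu_{x_0}$ wherever $\q(x_0)\neq 0$; on the zero locus of $\q$ the identity is automatic and $\phi$ may be set freely. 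Conversely, if $f^*(f_*\q) = \phi\cdot\q$ with $\phi\geq 0$, pulling back via each $g_x$ yields $f_*\q = (\phi\circ g_x)\,g_x^*\q$ on $W$, so every $g_x^*\q$ is a non-negative multiple of $f_*\q$, which reinstates equality in the pointwise triangle inequality and hence in the $L^1$ norm.

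The main obstacle, and really the only delicate point, is handling this equality case carefully: the triangle-inequality equality condition is naturally phrased in terms of complex-valued local coefficients sharing a common argument, whereas the conclusion is the coordinate-free identity $f^*(f_*\q)=\phi\,\q$ with the additional constraint that $\phi$ is real and non-negative. Verifying that the translation between these two formulations is valid, and that points where $\q$, $f_*\q$, or $\phi$ vanish cause no genuine obstruction, is where care is required.
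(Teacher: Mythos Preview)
Your proposal is correct and follows essentially the same approach as the paper: reduce to the pointwise triangle inequality on evenly covered neighborhoods via local inverse branches $g_x$, integrate, and identify the equality case with the condition that each $g_x^*\q$ is a non-negative multiple of $f_*\q$. The paper is more terse (it integrates over topological disks $V'\subset V$ rather than invoking a partition of unity) and defines $\phi$ directly by $\phi\bigl(g(y)\bigr):=\psi_g(y)$ from $g^*\q=\psi_g\,f_*\q$, but the substance is identical.
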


\begin{proof}
The proof is an immediate application of the triangle inequality: for any topological disk $V'\subset V$, we have 
\[\int_{V'}|f_*\q| = \int_{V'}\left|\sum g^*\q\right|
\leq \int_{V'}\sum  |g^* \q| = \sum \int_{V'} |g^* \q| = \int_{f^{-1}(V')} |\q|,
\]
where the sum ranges over the inverse branches of $f$ defined on $V'$. 
It follows that 
\[\int_V |f_*\q|\leq \int_{f^{-1}(V)}|\q| =  \int_{U}|\q| \]
with equality if and only if for all inverse branches $g$ of $f$, we have $g^*\q=\psi_g\ f_*\q$ for some positive function $\psi_g$. The result follows by setting $\phi(g(y)):=\psi_g(y)$. 
\end{proof}

\begin{corollary}\label{coro:lambdainD}
If $f\in \Rat_D$ is postcritically finite, then $f_*:\cQ(\Chat)\to \cQ(\Chat)$ is contracting. In particular, the eigenvalues of $f_*$ have modulus at most $1$. 
\end{corollary}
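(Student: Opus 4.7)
The plan is to reduce the statement to a direct application of the Contraction Principle, after excising the relevant postcritical and preimage sets so that $f$ restricts to a genuine covering map.

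First, I would observe that since $\pf$ is forward invariant, every $x\in\pf$ satisfies $f(x)\in\pf$, i.e.\ $x\in f^{-1}(\pf)$; hence $\pf\subset f^{-1}(\pf)$. Moreover $\cf\subset f^{-1}(\vf)\subset f^{-1}(\pf)$ because $\vf\subset\pf$. Set
\[
V:=\Chat\sm\pf\and U:=\Chat\sm f^{-1}(\pf)=f^{-1}(V).
\]
Then $U$ contains no critical points, so the restriction $f:U\to V$ is a (finite, unramified) covering map, and the Contraction Principle applies on this pair.

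Next, given $\q\in\cQ(\Chat)$, the fact that $\q$ has only simple poles means $|\q|$ is locally integrable on all of $\Chat$, so the finitely many points in $f^{-1}(\pf)$ are negligible and $\|\q\|_{L^1(\Chat)}=\|\q\|_{L^1(U)}$. By Corollary \ref{coro2}, $f_*\q\in\cQ(\Chat)$ too, so likewise $\|f_*\q\|_{L^1(\Chat)}=\|f_*\q\|_{L^1(V)}$. The Contraction Principle applied to $f:U\to V$ then yields
\[
\|f_*\q\|_{L^1(\Chat)}=\|f_*\q\|_{L^1(V)}\le \|\q\|_{L^1(U)}=\|\q\|_{L^1(\Chat)},
\]
which is exactly the statement that $f_*:\cQ(\Chat)\to\cQ(\Chat)$ is contracting for the $L^1$ norm.

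Finally, if $\lambda\in\C$ is an eigenvalue with a nonzero eigenvector $\q\in\cQ(\Chat)$, then $f_*\q=\lambda\q$ gives $|\lambda|\cdot\|\q\|_{L^1(\Chat)}=\|f_*\q\|_{L^1(\Chat)}\le\|\q\|_{L^1(\Chat)}$, and since $\|\q\|_{L^1(\Chat)}>0$ we conclude $|\lambda|\le 1$.

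The only subtle point, which I would address first, is the verification that $f:U\to V$ really is a covering map; everything else is a mechanical packaging of the Contraction Principle together with integrability of simple poles. There is no serious obstacle beyond that bookkeeping.
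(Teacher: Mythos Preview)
Your proof is correct and follows essentially the same route as the paper: apply the Contraction Principle to the covering obtained by excising a finite set containing $\vf$, and use integrability of simple poles to see that excising finitely many points does not change $L^1$ norms. The only cosmetic difference is that the paper (see the proof of Proposition~\ref{prop:noeigen1}) excises merely $\vf$ rather than all of $\pf$; your choice of $\pf$ works just as well since postcritical finiteness guarantees it is finite.
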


\begin{corollary}\label{coro:lambdainDforpoly}
Let $f:\C\to \C$ be a polynomial of degree $D$. Then for all $k\geq 0$, the eigenvalues of 
$f_*:\cQ_k(\C)\to \cQ_k(\C)$ have modulus less than $1$. 
\end{corollary}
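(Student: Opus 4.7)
The plan is to apply the Contraction Principle not over all of $\Chat$, but over a sufficiently large bounded disk in $\C$, exploiting the polynomial behavior of $f$ at infinity to force strict contraction. Let $\q\in\cQ_k(\C)$ be an eigenvector with $f_*\q=\lambda\q$; the case $\lambda=0$ is vacuous, so assume $\lambda\neq 0$. Although $\q$ need not be integrable on all of $\C$ (when its pole at $\infty$ has order $\geq 2$), it is perfectly integrable over any bounded region containing only finitely many of its simple poles, which is all we shall need.

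First I would choose $R>0$ large enough that the open disk $\Delta:=\{z\in\C:|z|<R\}$ contains $\vf$ together with every pole of $\q$ in $\C$, and moreover such that $f^{-1}(\overline{\Delta})\subsetneq\Delta$ with the complement $\Delta\sm\overline{f^{-1}(\Delta)}$ of positive area. This last condition can be met because $f$ is a polynomial of degree $D\geq 2$: since $|f(z)|$ grows like $|z|^D$ at infinity, $f^{-1}(\Delta)$ sits inside a disk of radius of order $R^{1/D}$, which is much smaller than $R$ when $R$ is large. With this $R$ fixed, the restriction $f\colon f^{-1}(\Delta\sm\vf)\to\Delta\sm\vf$ is a genuine covering map, and $\q$ is integrable on the bounded domain $f^{-1}(\Delta)$.

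Applying the Contraction Principle and noting that $\vf$ has measure zero, I obtain
\[
\int_\Delta |f_*\q|\;\leq\;\int_{f^{-1}(\Delta)}|\q|,
\]
and substituting the eigenvalue equation on the left yields $|\lambda|\int_\Delta|\q|\leq\int_{f^{-1}(\Delta)}|\q|$. To upgrade this to a strict bound I would observe that the zeros of the nonzero meromorphic differential $\q$ form a discrete set, so $|\q|$ is positive almost everywhere on the open set $\Delta\sm\overline{f^{-1}(\Delta)}$, which has positive area by the choice of $R$. Consequently $\int_{f^{-1}(\Delta)}|\q|<\int_\Delta|\q|$, while $\int_\Delta|\q|>0$ because $\q$ is nonzero. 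Combining these inequalities gives $|\lambda|<1$.

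The main obstacle is conceptual rather than technical: one must recognize that the Contraction Principle can still be deployed even when $\q$ fails to be globally integrable, simply by restricting to a large disk, and that the polynomial dynamics at infinity then guarantees $f^{-1}(\Delta)$ lies strictly inside $\Delta$. It is precisely this ``mass lost at infinity'' phenomenon---absent for general rational maps---that promotes the weak bound $|\lambda|\leq 1$ from Corollary~\ref{coro:lambdainD} to the strict inequality claimed here.
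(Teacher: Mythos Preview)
Your proof is correct and takes essentially the same approach as the paper: choose a large disk $V$ with $U:=f^{-1}(V)$ compactly contained in $V$, apply the Contraction Principle to the covering $f:U\sm f^{-1}(\vf)\to V\sm\vf$, and use the strict inclusion $U\subsetneq V$ together with $\q\neq 0$ to turn $\leq$ into $<$. Your write-up is somewhat more explicit than the paper's in justifying both the strict inclusion (via the growth $|f(z)|\sim|z|^D$) and the strict inequality of integrals (via the zeros of $\q$ being discrete), but the argument is the same.
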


\begin{proof}
Suppose $\lambda$ is an eigenvalue and $\q\in \cQ_k(\C)$ is an associated eigenvector; that is, $\q\neq 0$ and $f_*\q = \lambda \q$. 
Let $V$ be a sufficiently large disk so that $U:=f^{-1}(V)$ is compactly contained in $V$. Set $V':=V\sm \vf$ and $U':=f^{-1}(V')$. Then 
\[|\lambda| \cdot \| \q\|_{L^1(V)}=\|\lambda \q\|_{L^1(V')} = \|f_* \q\|_{L^1(V')}\leq \|\q\|_{L^1(U')} =\|\q\|_{L^1(U)}< \|\q\|_{L^1(V)}.\]
The first inequality is an application of the contraction principle. 
The last inequality is strict since $U$ is compactly contained in $V$ and $\q\neq 0$. In addition, $ \| \q\|_{L^1(V)}>0$, so $|\lambda|<1$. 
\end{proof}

\begin{proposition}[Thurston]\label{prop:noeigen1}
Let $f\in \mathrm{Rat}_d$, and suppose $\lambda$ is  an eigenvalue of $f_*:\cQ(\Chat)\to \cQ(\Chat)$. If $|\lambda|=1$, then $f$ is a Lattès map with four postcritical points. If $\lambda=1$, then $f$ is a flexible Lattès map. 
\end{proposition}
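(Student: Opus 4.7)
The plan is to promote equality in the Contraction Principle to an exact functional equation $f^{*}\q=c\,\q$, and then invoke Thurston rigidity. Suppose $\q\in\cQ(\Chat)\sm\{0\}$ satisfies $f_{*}\q=\lambda\,\q$ with $|\lambda|=1$, so $\|f_{*}\q\|_{L^{1}(\Chat)}=\|\q\|_{L^{1}(\Chat)}$. The equality case of the Contraction Principle produces $\phi:\Chat\to[0,+\infty)$ with $f^{*}(f_{*}\q)=\phi\,\q$; substituting $f_{*}\q=\lambda\,\q$ gives
\[
\frac{f^{*}\q}{\q}=\frac{\phi}{\lambda},
\]
an equality between a meromorphic (hence rational) function on $\Chat$ and a function taking values on the single ray $\lambda^{-1}\cdot[0,+\infty)$. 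Since a non-constant rational self-map of $\Chat$ is surjective, this ratio is a nonzero constant $c$, so $f^{*}\q=c\,\q$. The routine identity $f_{*}\circ f^{*}=D\cdot\id$ (immediate from $r(y)=\sum_{x\in f^{-1}(y)}q(x)/f'(x)^{2}$) then yields $D\,\q=f_{*}(c\,\q)=c\lambda\,\q$, so $c=D/\lambda$ and $|c|=D$.

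Next I would extract structural information from $f^{*}\q=c\,\q$ via Lemma \ref{ordlemmapull}. At a critical point $x$ of local degree $\delta\geq 2$,
\[
\ord_{x}\q=\delta\bigl(2+\ord_{f(x)}\q\bigr)-2,
\]
which is $\geq 2\delta-2\geq 2$ when $f(x)$ is not a pole of $\q$ and equals $\delta-2\geq 0$ when $f(x)$ is a simple pole; in particular no critical point of $f$ lies in the pole set of $\q$, and the pole set is forward-invariant. The equation $f^{*}\q=c\,\q$ also says that the measurable horizontal line field of $\q$ on $\Chat$ is preserved by $f$ up to rotation by $\arg(c)/2$, so the Beltrami differential $\bar{\q}/|\q|$ is $f$-equivariant and generates a non-trivial quasiconformal deformation of $f$. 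Thurston's rigidity theorem for postcritically finite rational maps then forces $f$ to be a Latt\`es map. The hypothesis that $\q$ has only simple poles rules out $\card(\pf)=3$: for those Latt\`es maps the natural flat structure produces poles of order $\geq 2$, whereas $\card(\pf)=4$ corresponds to the degree-$2$ projection $\Theta:\bE\to\Chat$ for which $\q=\Theta_{*}((\d w)^{2})$ has exactly four simple poles. Hence $\card(\pf)=4$.

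Finally, if $\lambda=1$ then $c=D>0$. In the Latt\`es description $f\circ\Theta=\Theta\circ L$ with $L(w)=\alpha w+\beta$ on $\bE=\C/\Lambda$ and $\deg\Theta=2$, pulling $\q=\Theta_{*}((\d w)^{2})$ through $\Theta^{*}f^{*}=L^{*}\Theta^{*}$ forces $\alpha^{2}=D$, so $\alpha=\pm\sqrt{D}$ is real; since any real scalar carrying a rank-$2$ lattice $\Lambda\subset\C$ into itself must be an integer, $\alpha\in\Z$, which by definition makes $f$ flexible. The main obstacle is the passage from the functional equation $f^{*}\q=c\,\q$ to Latt\`es-ness: this rests on Thurston's classification of postcritically finite rational maps admitting invariant line fields on $\Chat$, and it is the deep ingredient. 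The remaining steps are direct manipulations with the Contraction Principle and the order formulas of Section \ref{QDs}.
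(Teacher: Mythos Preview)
Your opening is correct and agrees with the paper: the equality case of the Contraction Principle gives $f^{*}(f_{*}\q)=\phi\,\q$ with $\phi$ constant, hence $f^{*}\q=c\,\q$ with $c=D/\lambda$. The divergence is in how you pass from this functional equation to ``$f$ is Latt\`es with four postcritical points.'' The paper does this by an elementary Riemann--Hurwitz count: from $f^{*}\q=c\,\q$ and Lemma~\ref{ordlemmapull} one gets $f^{-1}(Y)\subseteq Y\cup\cf$ for $Y$ the pole set of $\q$, and then
\[
D\cdot\card(Y)-N=\card\bigl(f^{-1}(Y)\bigr)\leq \card(Y)+\card(\cf)\leq \card(Y)+2D-2,
\]
together with $\card(Y)\geq 4$, forces $\card(Y)=4$, all critical points simple, and $f^{-1}(Y)=Y\sqcup\cf$; this is precisely the characterization in Proposition~\ref{prop:caraclattes}. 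No invariant-line-field theorem is needed.

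Your alternative route through line fields has a genuine gap. When $|\lambda|=1$ but $\lambda\neq 1$, the Beltrami form $\mu=\bar{\q}/|\q|$ satisfies $f^{*}\mu=(\bar c/|c|)\,\mu$ with $\bar c/|c|\neq 1$: it is \emph{not} $f$-invariant, so it does not generate a quasiconformal deformation of $f$, and the no-invariant-line-field theorem does not apply. Indeed that theorem concludes \emph{flexible} Latt\`es, yet rigid Latt\`es maps with four postcritical points do carry eigenvalues with $|\lambda|=1$, $\lambda\neq 1$ (the eigenvalue is $\alpha^{2}/D$ for an imaginary quadratic integer $\alpha$); so no argument producing ``flexible'' from $|\lambda|=1$ alone can be correct. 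Your exclusion of $\card(\pf)=3$ is also incomplete: you assert that the eigenvector $\q$ must be the pushforward of $(\d w)^{2}$, but you have not shown $\q\in\qf$, nor that $\qf$ is one-dimensional, without first knowing $\card(\pf)=4$. Your final paragraph, deducing $\alpha^{2}=D$ and hence $\alpha\in\Z$ when $\lambda=1$, is correct and matches the paper's conclusion for the second assertion.
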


\begin{proof}
Suppose $\q\in\cQ(\Chat)$ is an eigenvector associated to $\lambda$. Set $V:=\Chat\setminus \vf$ and $U:=f^{-1}(V)$, so that $f:U\to V$ is a covering map. Then,  
\[\|\q\|_{L^1(\Chat)}= \|\lambda \q\|_{L^1(V)} = \|f_*\q\|_{L^1(V)} \leq\|\q\|_{L^1(U)}=\|\q\|_{L^1(\Chat)}.\]
We therefore have the case of equality in the contraction principle, which shows that 
\[\lambda f^*\q =  f^*(f_*\q) = \phi\ \q\quad\text{with}\quad \phi:U\to [0,+\infty)\]
a real and positive function. Since $\q$ and $f^*\q$ are meromorphic, the function $\phi$ is meromorphic too, therefore constant. Since $\|f^*\q\|_{L^1(\Chat)} = D\ \|\q\|_{L^1(\Chat)}$, this constant is $\phi = 1/D$, and we have 
\[\q= D \lambda \ f^*\q.\]

As a consequence, the preimage of a pole of $\q$ by $f$ is either a pole of $\q$ or a critical point of $f$. In other words, if we denote by $Y$ the set of poles of $\q$, we have 
\[f^{-1}(Y)\subseteq Y\cup \cf.\]
Let $N$ be the number of critical points of $f$ contained in $f^{-1}(Y)$, counting multiplicities. On the one hand, 
\[D\cdot \card(Y) - N = \card\bigl(f^{-1}(Y)\bigr) \leq \card(Y) + \card(\cf)  \leq \card(Y)+2D-2.\]
On the other hand, $\q$ has at least four poles, so that $\card(Y)\geq 4$. 
Thus, 
\[ 4D-4 \leq (D-1)\cdot \card(Y)\leq N+\card(\cf)\leq 4D-4.\]
This shows that $\card(Y)=4$, $\card(\cf) = 2D-2$, so that all the critical points are simple, and $f^{-1}(Y) = Y\sqcup \cf$. 
This is a characterization of Lattès maps with four postcritical points (see Proposition \ref{prop:caraclattes}). 

There is a complex torus $\bE$, a ramified cover $\Theta :\bE\to \Chat$ ramifying at each point above $Y$ with local degree $2$, and an endomorphism $L:\bE\to \bE$ such that the following diagram commutes: 
\[\diagram
\bE\rto^L\dto_\Theta & \bE \dto^\Theta\\
\Chat\rto_f & \Chat
\enddiagram
\] 
Since $\Theta$ has local degree $2$ at each point above $Y$, the quadratic differential $\Theta^*\q$ has no pole, so it is globally holomorphic. 
In addition, 
\[L^*(\Theta^*\q) = \Theta^*(f^*\q) = \Theta^*\left(\frac{1}{D\lambda} \cdot \q\right)=\frac{1}{D\lambda} \Theta^*\q.\]
Thus, $L:\bE\to \bE$ is multiplication by $±\sqrt{D\lambda}$. In particular, if $\lambda=1$, then $f$ is a flexible Lattès map. 
\end{proof}
 
 \section{Infinitesimal rigidity}\label{sec:rigidity}
 
\noindent For completeness, we recall the proof by Thurston that with the exception of flexible Lattès maps, postcritically finite rational maps are infinitesimally rigid. Here and henceforth, we consider holomorphic families $t\mapsto \gamma_t$ defined near $t=0$ in $\C$. We shall employ the notation 
\[\gamma:=\gamma_0\and \dot\gamma:=\frac{\d \gamma_t}{\d t}\Big|_{t=0}.\]

If $t\mapsto f_t$ is a family of rational maps of degree $D$, then $\bxi:=\dot f\in \T_f\Rat_D$ is a section of the pullback bundle $f^\star \T\Chat$: for each $z\in \Chat$, $\bxi(z)\in T_{f(z)}\Chat$. Setting 
\[\btau(x):= (\D_x f)^{-1}\bigl( \bxi(x)\bigr)\quad\text{if}\quad x\not\in \cf,\]
we define a meromorphic vector field $\btau$ on $\Chat$, holomorphic outside $\cf$, with poles of order at most the multiplicity of $x$ as a critical point of $f$ when $x\in \cf$. This vector field satisfies
\[\bxi = \D f\circ \btau.\]

 \begin{theorem}[Thurston]\label{th:infrigidity}
 Let $t\mapsto f_t$ be a holomorphic family of postcritically finite rational maps of degree $D$, parameterized by a neighborhood of $0$ in $\C$. Then either the maps are flexible Lattès maps, or there is an analytic family of Möbius transformations $t\mapsto M_t$ such that $M_0={\rm id}$ and $f_t\circ M_t = M_t\circ f_0$. 
 \end{theorem}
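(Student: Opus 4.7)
The plan is to construct the infinitesimal generator $\bomega := \dot M$ of the desired conjugating family at $t=0$, and then integrate. Differentiating $f_t\circ M_t = M_t\circ f_0$ at $t = 0$, and recalling that the Lie algebra of $\mathrm{PSL}_2(\C) = \mathrm{Aut}(\Chat)$ is the three-dimensional space $\mathfrak{sl}_2(\C)$ of global holomorphic vector fields on $\Chat$, yields the linearized equation
\[\bxi = \bomega\circ f - \D f\cdot \bomega,\quad\text{equivalently}\quad \btau = f^*\bomega - \bomega,\]
with $\btau := \bxi/\D f$ as defined just before the theorem statement. Once a Möbius vector field $\bomega$ is produced at $t=0$, the same construction yields an analytic family $\bomega(t)\in\mathfrak{sl}_2(\C)$, and integrating the nonautonomous ODE $\dot M_t = \bomega(t)\circ M_t$ with $M_0 = \id$ in the finite-dimensional Lie group $\mathrm{PSL}_2(\C)$ produces $M_t$; the identity $f_t\circ M_t = M_t\circ f_0$ then follows by uniqueness of solutions to a first-order ODE in $\Rat_D$, both sides sharing the initial value $f_0$.

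To produce $\bomega$, I would first extract partial data on $\pf$ from the pcf hypothesis. Each postcritical point $p\in\pf$ admits a unique holomorphic continuation $p_t\in P_{f_t}$ with $p_0 = p$, and differentiating gives a vector field $\bomega_0$ on the finite set $\pf$. Differentiating the postcritical relations $f_t(q_t)=p_t$ for noncritical $q\in\pf$ and $f_t(c_t)=v_t$ for $c\in\cf$ (with $v = f(c) \in\pf$) shows that $\btau$ and $\bomega_0$ satisfy $\btau = f^*\bomega_0 - \bomega_0$ on $\pf$, suitably interpreted at critical points, where the pole of $\btau$ at $c$ is captured by the value $\bomega_0(f(c))$. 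The task thus reduces to extending $\bomega_0$ from $\pf$ to a global element $\bomega\in\mathfrak{sl}_2(\C)$.

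The obstruction is analyzed via the residue pairing with $\qf = \cQ(\Chat;\pf)$. The restriction map $\mathfrak{sl}_2(\C)\hookrightarrow V := \bigoplus_{p\in\pf}\T_p\Chat$ is injective (Möbius vector fields are determined by their values at any three points), and a dimension count shows that its image is precisely the annihilator of $\qf$ in $V$ under the pairing $(\q,\bomega)\mapsto \sum_{p\in\pf}\res(\q\otimes\bomega,p)$, which vanishes for global $\bomega\in\mathfrak{sl}_2(\C)$ since $\q\otimes\bomega$ then has poles only on $\pf$. Hence $\bomega_0$ extends if and only if $\sum_p\res(\q\otimes\bomega_0,p)=0$ for every $\q\in\qf$. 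Applying Lemma \ref{lem:pushpull} to the identity $\btau = f^*\bomega_0 - \bomega_0$ on $\pf$ shows that this residue sum depends on $\q$ only through $(f_*-\id)\q$, so the obstruction is precisely the pairing against $\ker(f_*-\id)\subset\qf$. Since $\qf$ is an $f_*$-invariant subspace of $\cQ(\Chat)$, Proposition \ref{prop:noeigen1} forces this kernel to be trivial unless $f$ is a flexible Lattès map, and $\bomega$ exists in all remaining cases.

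\textbf{Main obstacle.} The technically delicate step will be the residue computation via Lemma \ref{lem:pushpull}, identifying the cokernel of the extension problem with $\ker(f_*-\id)\subset\qf$ and carefully handling the poles of $\btau$ at $\cf$. Once this duality is in place, Proposition \ref{prop:noeigen1} closes the argument and the integration of $\bomega(t)$ in $\mathrm{PSL}_2(\C)$ is a routine ODE step in a finite-dimensional Lie group.
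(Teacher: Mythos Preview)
Your approach is essentially the paper's: both hinge on the same residue computation (via Lemma~\ref{lem:pushpull} and the global residue theorem applied to $\q\otimes\btau$) showing that the postcritical-motion vector annihilates $\mathrm{Im}(\id-f_*)\subset\qf$, and then invoke Proposition~\ref{prop:noeigen1} to make $\id-f_*$ surjective. The only packaging difference is that the paper first normalizes so that $f_t$ fixes $0,1,\infty$ (three of the $D+1$ fixed points, which move holomorphically) and then proves $\dot f\equiv 0$ directly; this replaces your ODE integration in $\mathrm{PSL}_2(\C)$ by a one-line conclusion and avoids having to check that your $\bomega(t)$ depends holomorphically on $t$ (which in your framing needs uniqueness of the extension, hence $\card(\pf)\geq 3$).

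One phrasing to fix in your delicate step: what the transposition lemma actually gives is $\sum_{p}\res\bigl((\id-f_*)\q\otimes\bomega_0,p\bigr)=0$, i.e.\ the pairing $\langle\,\cdot\,,\bomega_0\rangle$ vanishes on $\mathrm{Im}(\id-f_*)$ --- not that it ``depends on $\q$ only through $(f_*-\id)\q$'', which is the opposite direction. Your conclusion survives: the residual obstruction lives in $\qf/\mathrm{Im}(\id-f_*)$, whose dimension equals $\dim\ker(\id-f_*)$, and this vanishes in the non--flexible-Latt\`es case.
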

 
\begin{proof}
Without loss of generality, we may assume that $f$ is not a flexible Lattès map. The fixed points of $f_t$ are superattracting or repelling and depend holomorphically on $t$. There are $D+1\geq 3$ such fixed points. Conjugating the family $t\mapsto f_t$ with a holomorphic family $t\mapsto M_t$ of Möbius transformations, we may assume that $f_t$ fixes $0$, $1$ and $\infty$. We will show that in this case, the holomorphic family $t\mapsto f_t$ is constant. 
It is enough to show that $\ds\frac{\d f_t}{\d t}$ identically vanishes, and since $t=0$ plays no particular role, it is enough to prove that $\dot f\equiv 0$. 

Set $\bxi:=\dot f\in \T_f\Rat_D$ and let $\btau$ be the globally meromorphic vector field on $\Chat$ such that $\bxi= \D f\circ \btau$.  

As $t$ varies, the set $Y_t:={\cal P}_{f_t}\cup \{0,1,\infty\}$ moves holomorphically and $f_t(Y_t) = Y_t$. For each $y\in Y$, let $t\mapsto y_t$ be the holomorphic curve satisfying $y_0=y$ and $y_t\in Y_t$. Set 
\[\bvtheta(y):=\frac{\d y_t}{\d t}\Big|_{t=0} \in \T_y\Chat.\] 
If $y\in Y$ and $z:=f(y)\in Y$, then $z_t= f_t(y_t)$, so that 
\[\bvtheta\circ f = \bxi + \D f\circ \bvtheta\quad \text{on } Y\quad \text{and}\quad \bvtheta\circ f = \bxi \quad \text{on }\cf.\]
Let $\btheta$ be a vector field, defined and holomorphic near $Y$, with $\btheta|_{Y}= \bvtheta$. Then, 
$f^*\btheta- \btau$ is holomorphic near $f^{-1}(Y)$ and coincides with $\btheta$ on $Y$. Also note that since $f_t$ fixes $0$, $1$ and $\infty$, $\btheta$ vanishes at $0$, $1$ and $\infty$. 

Let $\nabla_f:=\id-f_*:\cQ(\Chat;Y)\to \cQ(\Chat;Y)$. Observe that for $\q\in \cQ(\Chat;Y)$, 
\begin{align*}
\sum_{y\in Y} \res\bigl((\nabla_f \q)\otimes \btheta,y\bigr)  
&  \underset{(1)}= \sum_{y\in Y} \res(\q\otimes \btheta,y) -  \sum_{y\in Y} \res\bigl((f_*\q)\otimes \btheta,y\bigr)\\
& \underset{(2)}= \sum_{y\in Y} \res(\q\otimes \btheta,y) -  \sum_{x\in f^{-1}(Y)} \res(\q\otimes f^*\btheta,x)\\
&  \underset{(3)}= \sum_{y\in Y} \res(\q\otimes \btheta,y) -  \sum_{x\in f^{-1}(Y)} \res(\q\otimes (f^*\btheta-\btau),x)\\
&  \underset{(4)}= 0.
\end{align*}
Equality (1) holds by definition of $\nabla_f$; Equality (2) follows from Lemma \ref{lem:pushpull}; Equality (3) follows from the fact that $\q\otimes \btau$ is globally meromorphic on $\Chat$ with poles contained in $Y\cup \cf\subseteq f^{-1}(Y)$, so that the sum of its residues on $f^{-1}(Y)$ is $0$; Equality (4) follows from the fact that $f^*\btheta- \btau$ is holomorphic near $f^{-1}(Y)$ and coincides with $\btheta$ on $Y$ which contains the set of poles of $\q$. 

According to Proposition \ref{prop:noeigen1}, since $f$ is not a flexible Lattès map, $\lambda=1$ is not an eigenvalue of $f_*:\cQ(\Chat)\to \cQ(\Chat)$.  The operator $\nabla_f:\cQ(\Chat;Y)\to \cQ(\Chat;Y)$ is therefore injective, thus surjective. It follows that for any $\q\in \cQ(\Chat;Y)$, 
\[\sum_{y\in Y} \res(\q\otimes \btheta,y) = 0.\]
Equivalently, $\bvtheta$ is the restriction to $Y$ of a globally holomorphic vector field $\btheta$. Since $\btheta$ vanishes at $0$, $1$ and $\infty$, we have  $\btheta= 0$. The vector field $-\btau = f^*\btheta-\btau$ is globally holomorphic and coincides with $\btheta=0$ on $Y$. So $\btau=0$  and $\bxi=\D f\circ \btheta=0$ as required. 
 \end{proof}

The following corollary of infinitesimal rigidity is well known. We include a proof for completeness.

\begin{proposition}\label{prop:algrep}
If $f\in \Rat_D$ is postcritically finite but not a flexible Lattès map, then the Möbius conjugacy class of $f$ contains a representative with algebraic coefficients.
\end{proposition}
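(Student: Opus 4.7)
The plan is to normalize $f$ by a M\"obius conjugation so that the resulting map becomes an isolated solution of a system of polynomial equations with rational coefficients; its coordinates will then automatically be algebraic.

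Since $f$ has $D+1\geq 3$ distinct fixed points (all superattracting or repelling for a postcritically finite map), I would pick three of them and let $M\in \mathrm{PGL}_2(\C)$ send them to $0$, $1$, $\infty$. Setting $f_0:=M^{-1}\circ f\circ M$, the proposition reduces to showing that $f_0\in \Rat_D$ has algebraic coefficients. Note that $f_0$ is not a flexible Latt\`es map either, since that property is M\"obius-invariant.

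Because $f_0$ is postcritically finite, there exist integers $n>m\geq 1$ such that $f_0^{\circ n}(c)=f_0^{\circ m}(c)$ for every critical point $c$ of $f_0$. I would fix such a pair $(m,n)$, then write a general $g\in\Rat_D$ as $g=P/Q$ with coprime homogeneous $P,Q$ of degree $D$, and define the locus $W\subset \Rat_D$ by the polynomial conditions
\[g(0)=0,\quad g(1)=1,\quad g(\infty)=\infty,\quad \Res_z\bigl(P'Q-PQ',\,N_{n,m}\bigr)=0,\]
where $N_{n,m}(z)$ is the numerator of $g^{\circ n}(z)-g^{\circ m}(z)$. All four conditions are polynomial identities over $\Q$ in the coefficients of $P$ and $Q$, and the last one is exactly the statement that every critical point $c$ of $g$ satisfies $g^{\circ n}(c)=g^{\circ m}(c)$. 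Thus $W$ is a $\Q$-subvariety of $\Rat_D$ that contains $f_0$.

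The heart of the argument is to show that $f_0$ is an isolated point of $W$. Let $t\mapsto g_t$ be a germ of holomorphic curve in $W$ through $f_0$. Each $g_t$ is then postcritically finite and not a flexible Latt\`es map, so Theorem \ref{th:infrigidity} furnishes a holomorphic family $t\mapsto M_t$ of M\"obius transformations with $M_0=\id$ and $g_t\circ M_t=M_t\circ f_0$. Since both $g_t$ and $f_0$ fix the triple $\{0,1,\infty\}$, the transformation $M_t$ must permute this triple; continuity from $M_0=\id$ forces $M_t=\id$, hence $g_t\equiv f_0$. So no analytic curve in $W$ passes through $f_0$, and $f_0$ is an isolated point of $W$. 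An isolated point of a $\Q$-subvariety of affine space has algebraic coordinates, so $f_0\in \Rat_D(\Qbar)$, giving the required algebraic representative of the conjugacy class.

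The hard part will be the elementary-but-fiddly verification that the condition ``every critical point satisfies $g^{\circ n}(c)=g^{\circ m}(c)$'' genuinely defines a $\Q$-algebraic subvariety of $\Rat_D$: one must clear denominators carefully in $g^{\circ n}-g^{\circ m}$ to produce the numerator polynomial $N_{n,m}$, and treat separately critical points or intermediate orbit points that pass through $\infty$. Once this is in hand, infinitesimal rigidity via Theorem \ref{th:infrigidity} does the rest.
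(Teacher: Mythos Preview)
Your overall strategy matches the paper's: normalize so that $0,1,\infty$ are fixed, exhibit $f_0$ as a point of a $\Q$-variety, and invoke infinitesimal rigidity to show it is isolated there. The genuine gap is in your algebraic condition. The vanishing of $\Res_z\bigl(P'Q-PQ',\,N_{n,m}\bigr)$ says only that the Wronskian and $N_{n,m}$ share \emph{some} root, i.e.\ that \emph{at least one} critical point of $g$ satisfies $g^{\circ n}(c)=g^{\circ m}(c)$; it does not force this for every critical point. Your locus $W$ is therefore far larger than intended, a generic $g_t\in W$ need not be postcritically finite, and the appeal to Theorem~\ref{th:infrigidity} is unjustified. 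This is not the ``fiddly'' clearing-of-denominators issue you flag at the end; it is simply the wrong condition.

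The paper sidesteps this by enlarging the ambient space rather than trying to encode everything in the coefficients of $g$ alone. It sets $Y:=\pf\cup\{0,1,\infty\}$ and $X:=f^{-1}(Y)$, works in $\Rat_D\times\Chat^X\times\Chat^Y$, and imposes, for each $y\in Y$, the homogeneous identity $B_y\circ G=\prod_{x\in f^{-1}(y)}A_x^{\deg_x f}$ together with $A_y=B_y$ for $y\in Y$. These equations are visibly algebraic over $\Q$, and any solution $(g,\alpha,\beta)$ automatically has $g$ postcritically finite with the prescribed portrait, so rigidity applies directly to give dimension~$0$ at $(f,\id,\id)$. You could repair your version by replacing the resultant with the divisibility condition that $P'Q-PQ'$ divide $N_{n,m}^{\,2D-2}$ (so that every root of the Wronskian, with whatever multiplicity, is a root of $N_{n,m}$), but tracking the marked points as explicit coordinates is both cleaner and avoids the degenerations at $\infty$ you were worried about.

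One minor point: you need not argue that each $g_t$ is non-Latt\`es. Theorem~\ref{th:infrigidity} already yields the conjugating family as soon as $g_0=f_0$ itself fails to be a flexible Latt\`es map.
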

 
\begin{proof}
As in the previous proof, conjugating $f$ with a Möbius transformation if necessary, we may assume that $f$ fixes $0$, $1$ and $\infty$ and set $Y:=\pf\cup \{0,1,\infty\}$. In addition, set $X:=f^{-1}(Y)$ and let $\delta:X\to \N$ be defined by
\[\delta(x) :=\deg_xf.\]

Let us identify $\Chat$ and $\P^1(\C)$ via the usual map $\P^1(\C)\ni [u:v]\mapsto u/v\in \Chat$. For $N\geq 1$, let us denote by $\H_N$ the vector space of homogeneous polynomials of degree $N$ from $\C^2$ to $\C$. There is a canonical isomorphism between $\P^1(\C)$ and $\P(\H_1)$: a point of $\P^1(\C)$, a $1$-dimensional linear subspace of $\C^2$, is identified with the space of forms on $\C^2$ vanishing on this linear subspace; that is, a $1$-dimensional linear subspace of $\H_1$. This subsequently yields an identification of $\Chat$ with $\P(\H_1)$.

Note that $\Rat_D$ may be identified with the open subset of $\P(\H_D\times \H_D)$ corresponding to pairs of coprime homogeneous polynomials of degree $D$. Such a pair of polynomials defines a nondegenerate homogeneous polynomial map $\C^2\to\C^2$ of degree $D$ which induces an endomorphism $\P^1(\C)\to \P^1(\C)$ of degree $D$. 

We shall denote as $\Chat_D$ the $D$-fold symmetric product of the Riemann sphere; that is, the quotient of $\Chat^D$ by the group of permutation of the coordinates. The map 
\[\H_1^D\ni (P_1,\ldots,P_D)\mapsto P_1\times \cdots\times P_D\in \H_D\]
induces an identification between $\Chat_D$ and $\P(\H_D)$. 

Set 
\[
{\cal X} := \Rat_D\times \Chat^X\times \Chat^Y\quad \text{and}\quad {\cal Y}:= ( \Chat_D\times \Chat)^Y.\]
A point $(g,\alpha,\beta)\in {\cal X}$ may be represented by a triple $\bigl(G,(A_x)_{x\in X},(B_y)_{y\in Y}\bigr)$ where
$G:=(G_1,G_2)\in \H_D\times \H_D$ is a nondegenerate homogeneous polynomial map $\C^2\to \C^2$ of degree $D$, and where $A_x\in \H_1$ and $B_y\in \H_1$ are linear forms $\C^2\to \C$. 
Recall that $Y\subset X$ and consider the algebraic map $\Phi:{\cal X}\to {\cal Y}$ induced by 
\[\bigl(G,(A_x)_{x\in X},(B_y)_{y\in Y}\bigr)\mapsto \left(\prod_{x\in f^{-1}(y)} A_x^{\delta(x)},A_y\right)_{y\in Y}\]
and the algebraic map $\Psi :{\cal X}\to {\cal Y}$ induced by 
\[\bigl(G,(A_x)_{x\in X},(B_y)_{y\in Y}\bigr)\mapsto \left(B_y\circ G,B_y\right)_{y\in Y}.\]
Let us consider the algebraic set ${\cal Z}\subset {\cal X}$ defined by the equation $\Phi=\Psi$.  

We claim that the triple $(g,\alpha,\beta)$ belongs to ${\cal Z}$ if and only if we have a commutative diagram 
\[\diagram
X\rto^\alpha \dto_f & \Chat\dto^g\\
Y\rto_\beta &\Chat
\enddiagram
\quad\text{with}\quad  \deg_{\alpha(x)}g = \deg_xf,\quad \text{and}\quad \alpha|_Y = \beta|_Y.\]
Indeed,
\[B_y\circ G = \prod_{x\in f^{-1}(y)} A_x^{\delta(x)} \]
if and only if for each $x\in f^{-1}(y)$, the point $\alpha(x)\in \Chat$ is a preimage of $\beta(y)\in \Chat$ by $g$ taken with multiplicity $\delta(x) = \deg_x f$. In addition, $A_y = B_y$ if and only if $\alpha(y) = \beta(y)$. 

As a consequence,  if $(g,\alpha,\beta)\in {\cal Z}$, then ${\cal C}_g = \alpha({\cal C}_f)\subseteq \alpha(X)$. In that case,  ${\cal P}_g = \beta (\pf)\subseteq \beta(Y)$, and $g$ is postcritically finite.

Set 
\[{\cal Z}_0:=\bigl\{(g,\alpha,\beta)\in {\cal Z}~|~\alpha(0)=0,~\alpha(1)=1,\text{ and }\alpha(\infty)=\infty\bigr\}.\]
Note that the triple $(f,\id,\id)$ belongs to ${\cal Z}_0$. 
According to Theorem \ref{th:infrigidity}, if $f$ is not a flexible Lattès map, then the algebraic set ${\cal Z}_0$ has dimension $0$ at $f$. This implies that $f$ has algebraic coefficients. 
\end{proof}
 
\section{The eigenvalues of $f_*:\cQ(\Chat)\to \cQ(\Chat)$}\label{sec:qchat}

\begin{proposition}\label{0inLambda}
If $f\in \Rat_D$, then $0$ is an eigenvalue of $f_*:\cQ(\Chat)\to \cQ(\Chat)$.
\end{proposition}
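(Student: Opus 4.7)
The plan is a dimension-count argument producing a nonzero $\q$ in the kernel of $f_*$. The key input is the formula
\[
\dim \cQ(\Chat;Z) = |Z|-3
\]
valid for every finite $Z\subseteq \Chat$ with $|Z|\geq 3$, which I would derive either from Riemann--Roch on $\Chat$ applied to $K_{\Chat}^{\otimes 2}$ twisted by the divisor $Z$, or directly: any $\q \in \cQ(\Chat;Z)$ has the shape $\q = \sum_{x\in Z\cap \C} r_x (\dz)^2/(z-x)$, and requiring $\q$ to have at worst a simple pole at $\infty$ amounts to three linear constraints on the residues (namely $\sum r_x = \sum r_x x = \sum r_x x^2 = 0$ when $\infty\notin Z$, and two of these when $\infty\in Z$).

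With this dimension formula in hand, Corollary~\ref{coro1} reduces the question to a linear map between finite-dimensional spaces: for any finite $X\subseteq \Chat$, $f_*$ restricts to
\[
f_*\colon \cQ(\Chat;X)\longrightarrow \cQ(\Chat;f(X)\cup \vf).
\]
I would then choose a finite set $S\subseteq \Chat\setminus \vf$ with $(D-1)|S| > |\vf|$---possible since $D\geq 2$ and $\vf$ is finite---and set $X:=f^{-1}(S)$. Because $S$ avoids $\vf$, every fiber $f^{-1}(s)$ for $s\in S$ consists of $D$ distinct points, so $|X|=D|S|$, while $|f(X)\cup \vf| = |S|+|\vf|$.

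Comparing dimensions then gives
\[
\dim \cQ(\Chat;X) - \dim \cQ(\Chat;f(X)\cup \vf) = (D-1)|S| - |\vf| > 0,
\]
so the restricted map has a nontrivial kernel. Any nonzero element of this kernel is a $\q \in \cQ(\Chat)$ with $f_*\q = 0$, establishing that $0$ is an eigenvalue of $f_*\colon \cQ(\Chat)\to \cQ(\Chat)$. The only step requiring real care is the dimension formula; everything else is formal, so I do not anticipate a serious obstacle.
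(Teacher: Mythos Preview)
Your proof is correct and follows the same dimension-count strategy as the paper, with one tactical difference worth noting. The paper takes $|S|=3$ (called $Y$ there), so that the intended target $\cQ(\Chat;Y)$ is already zero; to force $f_*\q$ to land there rather than in $\cQ(\Chat;Y\cup\vf)$, it imposes the additional linear conditions that $\q$ vanish on $\cf$ (to the appropriate multiplicity), and the count $\card(X)-3-(2D-2)=D-1\geq 1$ still leaves room for a nonzero $\q$. You instead let the target be the larger space $\cQ(\Chat;S\cup\vf)$ and simply take $|S|$ big enough that $(D-1)|S|>|\vf|$. Your version uses only Corollary~\ref{coro1} and avoids the extra appeal to Lemma~\ref{ordlemmapush} needed to suppress poles at $\vf$, so it is marginally cleaner; the paper's version is more economical in that three generic non-critical-value points already suffice.
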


\begin{proof}
Let $Y$ be a subset of $\Chat\sm \vf$ with $\card(Y)=3$. Set $X:=f^{-1}(Y)$. Note that $X$ is disjoint from $\cf$ and 
\[\card(X)-\card(\cf) \geq  3D - (2D-2)\geq D+2\geq 4.\]
So there is a nonzero quadratic differential $\q$ which vanishes on $\cf$ and whose poles are simple and contained in $X$. 
According to Lemma \ref{ordlemmapush}, the quadratic differential $f_*\q$ is holomorphic on $\Chat\sm Y$ and has at most simple poles along $Y$. Thus, it has at most $3$ poles counting multiplicities, which forces $f_*\q=0$. 
\end{proof}

Let us now assume $f\in \Rat_D$ is postcritically finite and set 
\[\qf:=\cQ(\Chat;\pf).\]
Note that $f_*(\qf)\subseteq \qf$. Indeed, according to Corollary \ref{coro1}, if $\q\in \qf$, then the poles of $f_*\q$ are contained in $f(\pf) \cup \vf= \pf$. So, the set of eigenvalues of $f_*:\cQ(\Chat)\to \cQ(\Chat)$ may be written as the union $\spec_f\cup \Lambda_f$ with 
\[\spec_f:=\{\lambda\in \C~|~\lambda\text{ is an eigenvalue of }f_*:\qf\to \qf\},\]
and 
\[\Lambda_f:=\{\lambda\in \C~|~\lambda\text{ is an eigenvalue of }f_*:\cQ(\Chat)/\qf\to\cQ(\Chat)/ \qf\}.\]
We postpone the study of $\spec_f$ and focus now on $\Lambda_f$. 

\begin{proposition}\label{prop:eigmult}
The elements of $\Lambda_f\sm\{0\}$ are the complex numbers $\lambda$ such that $1/\lambda^m$ is the multiplier of a cycle of $f$ of period $m$ which is not contained in $\pf$. 
\end{proposition}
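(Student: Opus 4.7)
\emph{Proof plan.} The plan is to reduce the eigenvalue problem on the infinite-dimensional quotient $\cQ(\Chat)/\qf$ to finite-dimensional linear algebra on invariant subspaces indexed by periodic cycles of $f$ disjoint from $\pf$. For each finite subset $X\subset \Chat\sm\pf$, I would let $V_X\subseteq \cQ(\Chat)/\qf$ denote the image of quadratic differentials whose poles outside $\pf$ lie in $X$. By partial fractions, $V_X$ is finite-dimensional with basis $\{[\q_x]\}_{x\in X}$ where $\q_x:=(\dz)^2/(z-x)$. By Corollary \ref{coro1}, together with the inclusions $\vf\subseteq \pf$ and $f(\pf)\subseteq \pf$, we have $f_*(V_X)\subseteq V_{f(X)}$.

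Next, I would establish the converse direction. Suppose $\lambda\neq 0$ is an eigenvalue with eigenvector $[\q]\neq 0$, and let $X$ be the set of poles of $\q$ outside $\pf$. The relation $f_*[\q]=\lambda[\q]$ together with the previous paragraph forces $X\subseteq f(X)$, hence $X=f(X)$ by finiteness. Therefore $X$ decomposes into periodic cycles $C_1,\ldots,C_k$ of $f$, each necessarily disjoint from $\pf$ (since $\pf$ is forward invariant). Each $V_{C_j}$ is $f_*$-invariant and these subspaces are independent inside $V_X$, so the eigenvector decomposes accordingly, and it suffices to compute the eigenvalues of $f_*$ restricted to $V_C$ for a single cycle $C$.

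The main calculation is the following: for a cycle $C=\{x_0,\ldots,x_{m-1}\}$ with $f(x_i)=x_{i+1\ \mathrm{mod}\ m}$, I would show
\[f_*[\q_{x_i}] \;=\; \frac{1}{f'(x_i)}\,[\q_{x_{i+1}}] \quad\text{in}\quad \cQ(\Chat)/\qf.\]
Since $C\cap \pf=\emptyset$ implies $C\cap \cf=\emptyset$ (if $x_i\in \cf$ then $x_{i+1}\in \vf\subseteq\pf$, contradicting disjointness), $f$ has $D$ distinct local inverse branches near $x_{i+1}$. Only the branch $g$ with $g(x_{i+1})=x_i$ contributes a pole at $x_{i+1}$; the other branches send $x_{i+1}$ to points where $\q_{x_i}$ is holomorphic. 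A local Laurent expansion using $g'(x_{i+1})=1/f'(x_i)$ then produces the claimed factor. Consequently, the matrix of $f_*|_{V_C}$ in the basis $\{[\q_{x_i}]\}$ is cyclic with characteristic polynomial $\lambda^m-\prod_i(1/f'(x_i))=\lambda^m-1/\mu$, where $\mu=\prod_i f'(x_i)$ is the multiplier of $C$. Its eigenvalues are precisely the $m$-th roots of $1/\mu$, which gives both directions of the claimed characterization.

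The main obstacle will be verifying the local pushforward formula $f_*[\q_{x_i}]\equiv (1/f'(x_i))\,[\q_{x_{i+1}}]$: this rests on careful bookkeeping of how simple residues of quadratic differentials transform under a holomorphic map and on the verification that no other inverse branch contributes to the pole at $x_{i+1}$. Once this local computation is established, everything else reduces to elementary linear algebra on a cyclic matrix.
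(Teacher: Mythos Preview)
Your strategy mirrors the paper's exactly: show that a nonzero eigenvector is supported on a finite $f$-invariant set outside $\pf$, decompose this set into cycles, and compute $f_*$ on each cycle as a cyclic matrix with characteristic polynomial $\lambda^m-1/\mu$. The paper packages the same computation via the residue isomorphism $\Res:\cQ(\Chat)/\qf\to\Omega_f=\bigoplus_{x\notin B}\T^*_x\Chat$, after which $f_*$ acts by $f_*\omega(y)=\sum_{x\in f^{-1}(y)}\omega\circ(\D_xf)^{-1}$; this is precisely the dual of your formula $f_*[\q_{x_i}]=\frac{1}{f'(x_i)}[\q_{x_{i+1}}]$.

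There is, however, a genuine technical slip you will need to repair. Your basis element $\q_x=(\dz)^2/(z-x)$ has a pole of order three at $\infty$, so $\q_x\notin\cQ(\Chat)$ and the class $[\q_x]\in\cQ(\Chat)/\qf$ is not defined. (Recall that a nonzero element of $\cQ(\Chat)$ has at least four simple poles.) The fix is easy once $\card(\pf)\geq 3$: replace $\q_x$ by any element of $\cQ(\Chat)$ with a simple pole of residue $\dz(x)$ at $x$ and all remaining simple poles in $\pf$; your local computation and the cyclic-matrix argument then go through verbatim. The paper's residue isomorphism sidesteps this bookkeeping entirely, which is its main advantage. A second omission: when $\card(\pf)=2$ (so $f$ is conjugate to $z\mapsto z^{\pm D}$) one has $\qf=0$ and there are not enough postcritical points to absorb extra poles; the paper handles this by enlarging $\pf$ to a three-point set $B$ and then checking at the end that the statement in terms of $\pf$ (rather than $B$) is still correct.
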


\begin{proof}
To begin with, let us describe the space $\cQ(\Chat)/\qf$. First, observe that if $\q\in \cQ(\Chat)$, then the residue  of $\q$ at a point $x\in \Chat$ is naturally a form on $\T_x\Chat$; that is, an element of $\T^*_x\Chat$. It may be defined as follows: if $\theta\in \T_x\Chat$ and $\btheta$ is a vector field defined and holomorphic near $x$ with $\btheta(x)  = \theta$, then 
\[\res(\q,x)(\theta):= \res(\q\otimes \btheta,x).\]
The result does not depend on the extension $\btheta$ since if $\btheta_1$ and $\btheta_2$ are two holomorphic vector fields which coincide at $x$, then $\btheta_1-\btheta_2$ vanishes at $x$, so that $\q\otimes (\btheta_1-\btheta_2)$ is holomorphic near $x$ and
$\res(\q\otimes \btheta_1,x) = \res(\q\otimes \btheta_2,x)$. 

Second, set 
\[B:=\begin{cases}
\pf&\text{ if }\card(\pf)\geq 3,\\
\pf\cup \{\alpha\}\text{ with }\alpha\text{ a repelling fixed point of }f&\text{ if } \card(\pf)=2.
\end{cases}\]
So $\card(B)\geq 3$, $f(B)=B$, and $\qf = \cQ(\Chat;B)$. Set 
\[\Omega_f:=\bigoplus_{x\in \Chat\sm B} \T^*_x\Chat.\]
Note that $\Omega_f$ is the space of $1$-forms on $\Chat\sm B$ which vanish outside a finite set. 
Consider the map $\Res:\cQ(\Chat)/\qf \to \Omega_f$ defined by 
\[\Res\bigl([\q]\bigr)(x):= \res(\q,x).\]
This map is well defined since if $\q_1\in \cQ(\Chat)$ and $\q_2\in \cQ(\Chat)$ satisfy $\q_1-\q_2\in \qf$, then  $\q_1-\q_2$ is holomorphic on $\Chat\sm \pf$, so that $\res(\q_1,x) = \res(\q_2,x)$ for all $x\in \Chat\sm B$.

\begin{lemma}
The map $\Res :\cQ(\Chat)/\qf \to \Omega_f$ is an isomorphism. 
\end{lemma}

\begin{proof}
First, if $\q\in \cQ(\Chat)$ and  $\res(\q,x) = 0$ for all $x\in \Chat\sm B$, then $\q$ is holomorphic outside $B$, so that $\q\in \cQ(\Chat;B) = \qf$. It follows that $\Res:\cQ(\Chat)/\qf \to \Omega_f$ is injective. 

Second, given two distinct points $x_1$ and $x_2$ in $\Chat$, let $\bomega_{x_1,x_2}$ be the meromorphic $1$-form on $\Chat$ which is holomorphic outside $\{x_1,x_2\}$, has residue $1$ at $x_1$ and residue $-1$ at $x_2$:
\[\bomega_{x_1,x_2} = \begin{cases} \dz/(z-x_1) - \dz/(z-x_2) & \text{ if } x_1\neq \infty\text{ and } x_2\neq \infty,\\
\dz/(z-x_1)&\text{ if }x_2=\infty,\\
-\dz/(z-x_2)&\text{ if }x_1=\infty.
\end{cases}
\]
Note that $\bomega_{x_1,x_2}$ does not vanish. 

Third, choose three distinct points $x_1$, $x_2$ and $x_3$ in $B$. Given $\omega\in \Omega_f$, we may define a function $\phi:\Chat\sm B\to \C$ by 
\[\omega(x) = \phi(x)\cdot \bomega_{x_2,x_3}(x).\]
Since $\omega$ vanishes outside a finite set, $\phi$ also vanishes outside a finite set.
Set 
\[\q:=\sum_{x\in \Chat\sm B}\phi(x)\cdot \bomega_{x,x_1}\otimes \bomega_{x_2,x_3}.\]
Since $x$, $x_1$, $x_2$, and $x_3$ are distinct, $\bomega_{x,x_1}\otimes \bomega_{x_2,x_3}\in \cQ(\Chat)$. Since  $\phi$ vanishes outside a finite set, the sum is finite and $\q\in \cQ(\Chat)$. By construction, for $x\in \Chat\sm B$, 
\[\res(\q,x) = \phi(x)\cdot \bomega_{x_2,x_3}(x) = \omega(x),\]
so that $\Res\bigl([\q]\bigr) = \omega$. 
It follows that $\Res:\cQ(\Chat)/\qf \to \Omega_f$ is surjective. 
\end{proof}

Observe that if $y\in \Chat\sm B\subseteq \Chat\sm \vf$, then ${\rm D}_xf:\T_x\Chat\to \T_y\Chat$ is invertible for any $x\in f^{-1}(y)$, and $f_*:\cQ(\Chat)/\qf\to\cQ(\Chat)/ \qf$ is conjugate to the linear map 
$f_*:\Omega_f\to \Omega_f$ defined by 
\[f_*\omega(y) :=\sum_{x\in f^{-1}(y)} \omega \circ ({\rm D}_xf)^{-1}.\]

Next, let $X\subset \Chat\sm B$ be a cycle of $f$ of period $m$ and multiplier $\mu$. Note that the space 
\[E_X:=\bigoplus_{x\in X} \T^*_x\Chat\subset \Omega_f\]
has dimension $m$ and is invariant by $f_*:\Omega_f\to \Omega_f$.

\begin{lemma}\label{lem:eigenvalscycle}
The endomorphism $f_*:E_X\to E_X$ is diagonalizable. Its eigenvalues are the $m$-th roots of $1/\mu$. 
\end{lemma}

\begin{proof}
Suppose $\lambda^m=1/\mu$, and let $x_0\mapsto x_1\mapsto \ldots\mapsto x_{m-1}\mapsto x_m=x_0$ be the points of $X$. Let $\omega_0\in\T^*_{x_0}\Chat$ be any nonzero form. For $1\leq j\leq m$, define recursively 
\[\omega_j:=\lambda \omega_{j-1}\circ ({\rm D}_{x_{j-1}}f)^{-1}\in \T^*_{x_j}\Chat.\] 
Then 
\[\omega_{m} = \lambda^m\omega_0\circ ({\rm D}_{x_0}f^{\circ m})^{-1} = \frac{\lambda^m}{\mu}\omega_0 = \omega_0\]
since ${\rm D}_{x_0}f^{\circ m}:\T_{x_0}\Chat\to \T_{x_0}\Chat$ is multiplication by $\mu$ and $\lambda^m=1/\mu$. It follows that the $1$-form $\omega\in \Omega_f$ defined by 
\[\omega(x) = \begin{cases}0&\text{ if } x\not\in X,\\
\omega_j&\text{ if } x = x_j\in X
\end{cases}\]
satisfies $f_*\omega = \lambda\omega$. 
\end{proof}

Finally,  assume $\lambda\neq 0$ is an eigenvalue of $f_*:\Omega_f\to \Omega_f$  and let $\omega\in \Omega_f$ be an eigenvector associated to $\lambda$. Set $X:=\{x\in \Chat~|~\omega(x)\neq 0\}$. If $\omega(y)\neq 0$, then there exists $x\in f^{-1}(y)$ such that $\omega(x)\neq 0$.  Thus, $X\subseteq f(X)$ and since the cardinality of $f(X)$ is always less than or equal to the cardinality of $X$, we necessarily have $X = f(X)$. So $X\subset \Chat\sm B$ is a union of cycles of $f$. It follows from Lemma \ref{lem:eigenvalscycle} that the eigenvalues of $f_*:\cQ(\Chat)/\qf\to\cQ(\Chat)/ \qf$ other than $\lambda=0$ are the complex numbers $\lambda$ such that $1/\lambda^m$ is the multiplier of a cycle of $f$ of period $m$ which is not contained in $B$. This completes the proof of Proposition \ref{prop:eigmult} when $B=\pf$; that is, when $\card(\pf)\geq 3$. 

To complete the proof of Proposition \ref{prop:eigmult} when $\card(\pf)=2$, observe that
\begin{itemize}
\item either $f$ is conjugate to $z\mapsto 1/z^D$, in which case there are $D+1$ repelling fixed points, each with multiplier $\mu=t-D$, so that the multipliers of the fixed points which are not contained in $B$ are the multipliers of the fixed points which are not contained in $\pf$; 
\item or $f$ is conjugate to $z\mapsto z^D$, in which case there may be a single repelling fixed point at $z=1$ when $D=2$; its multiplier is $2$ and we must show that $1/2$ is an eigenvalue of $f_*:\cQ(\Chat)/\qf\to \cQ(\Chat)/\qf$; in that case the multiplier of the cycle of period $m=2$ is $\mu = 4$, so that $1/2$ is a $m$-th root of $1/\mu$. 
\end{itemize}
This completes the proof of Proposition \ref{prop:eigmult}. 
\end{proof}

\begin{corollary}\label{coro:lambda}
Let $f\in \Rat_D$ be postcritically finite. Then, $\Lambda_f\subset \Disk$. 
\end{corollary}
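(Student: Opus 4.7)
The plan is to combine Proposition \ref{prop:eigmult} with the dichotomy that periodic cycles of a postcritically finite rational map are either superattracting or repelling. By Proposition \ref{prop:eigmult}, every nonzero element $\lambda\in\Lambda_f$ satisfies $\lambda^m = 1/\mu$ for some integer $m\geq 1$ and some multiplier $\mu$ of an $m$-cycle of $f$ that is \emph{not} contained in $\pf$. So the whole task reduces to bounding $|\mu|$ from below by showing that any such cycle must be repelling.

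Concretely, I would argue as follows. A superattracting cycle necessarily contains a critical point of $f$, hence lies in $\cf\subseteq\pf$; equivalently, any cycle disjoint from $\pf$ must be non-superattracting. Since $f$ is postcritically finite, the paper already records that every cycle of $f$ is either superattracting or repelling, so a cycle not contained in $\pf$ is automatically repelling, and its multiplier satisfies $|\mu|>1$. Consequently $|\lambda|^m = 1/|\mu| < 1$, which gives $|\lambda|<1$. Combined with the obvious fact that $0\in \Disk$, this yields $\Lambda_f\subset \Disk$.

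There is essentially no obstacle here beyond assembling the ingredients in the right order; the entire content is (i) invoke Proposition \ref{prop:eigmult} to translate eigenvalues into multipliers of cycles off the postcritical set, and (ii) use the PCF dichotomy to rule out the superattracting case. The only small subtlety worth mentioning explicitly is that ``cycle not contained in $\pf$'' is equivalent to ``no point of the cycle lies in $\pf$'' (because $\pf$ is forward invariant, so if one point of a cycle lies in $\pf$ then the whole cycle does), which is what lets us conclude the cycle contains no critical point and hence cannot be superattracting.
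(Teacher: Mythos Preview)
Your proof is correct and follows exactly the same approach as the paper: invoke Proposition \ref{prop:eigmult} and observe that a cycle not contained in $\pf$ must be repelling. The paper's proof is simply the one-line statement ``A cycle of $f$ not contained in $\pf$ is repelling,'' and you have spelled out the justification for this in full.
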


\begin{proof}
A cycle of $f$ not contained in $\pf$ is repelling. 
\end{proof}

\begin{example}
If $f(z) = z^{±D}$, then a cycle of period $m$ not contained in $\pf$ is  a repelling cycle of multiplier $(±D)^m$ and there is at least one such cycle for each period $m\geq 2$. It follows that 
\[
\Lambda_f = \{0\}\cup \left\{\frac{{\rm e}^{2\pi {\rm i}p/q}}{D}, ~p/q\in \Q/\Z\right\}.
\]
Note that $\card(\pf) = 2$, so that $\qf=\{0\}$ and $\spec_f = \emptyset$. 
\end{example}

\begin{example}
If $f\in \Rat_D$ is a flexible Lattès map, then a cycle of period $m$ not contained in $\pf$ is  a repelling cycle of multiplier $\sqrt{D}^m$ and there is at least one such cycle for each period $m\geq 1$. It follows that 
\[
\Lambda_f = \{0\}\cup \left\{\frac{{\rm e}^{2\pi {\rm i}p/q}}{\sqrt{D}}, ~p/q\in \Q/\Z\right\}.
\] 
\end{example}

\begin{proposition}\label{prop:algnum}
Let $f\in \Rat_D$ be postcritically finite. If $\lambda\in \Lambda_f\sm\{0\}$, then $\lambda$ is an algebraic number but not an algebraic integer. 
\end{proposition}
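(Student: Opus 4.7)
The plan is to use Proposition \ref{prop:eigmult}, which gives $\lambda^m=1/\mu$ where $\mu$ is the multiplier of a cycle $Z$ of period $m$ with $Z\not\subset\pf$. Since $f$ is postcritically finite, such a cycle must be repelling, so $|\mu|>1$. Algebraicity of $\lambda$ follows by reducing to the case of algebraic coefficients: in the flexible Latt\`es case the preceding example records $\mu=\sqrt{D}^{\,m}\in\Z$ directly, while otherwise Proposition \ref{prop:algrep} allows us to replace $f$ by a M\"obius conjugate with coefficients in $\overline{\Q}$ (without altering the multipliers or the eigenvalues of $f_*$). The periodic points then solve a polynomial equation over $\overline{\Q}$, and the multipliers lie in $\overline{\Q}$ as well, being rational expressions in those points and the coefficients of $f$.

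To prove that $\lambda$ is not an algebraic integer it suffices to show that $\lambda^m=1/\mu$ is not one. Let $K\subset\overline{\Q}$ be the Galois closure of $\Q(\mu)$. My strategy is to argue that \emph{every} Galois conjugate $\mu^\sigma$ is itself the multiplier of a repelling cycle of a postcritically finite rational map---namely the cycle $Z^\sigma=\sigma(Z)$ of the map $f^\sigma$ obtained by applying $\sigma$ to the coefficients of $f$---and hence satisfies $|\mu^\sigma|>1$ for every $\sigma\in\mathrm{Gal}(K/\Q)$. Taking the product over $\sigma$ gives
\[ \bigl|N_{K/\Q}(\mu)\bigr| \;=\; \prod_\sigma |\mu^\sigma| \;>\; 1, \qquad\text{so}\qquad \bigl|N_{K/\Q}(1/\mu)\bigr| \;<\; 1. \]
If $1/\mu$ were a (necessarily nonzero) algebraic integer, its norm would be a nonzero rational integer of absolute value at least $1$, a contradiction.

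The principal technical point is verifying that Galois conjugation faithfully transports the dynamical data. Since $\cf$ is cut out by the vanishing of the formal derivative of $f$, we have $\mathcal{C}_{f^\sigma}=\sigma(\cf)$ and, by iteration, $\mathcal{P}_{f^\sigma}=\sigma(\pf)$; in particular $f^\sigma$ is postcritically finite. The cycle $Z^\sigma$ has period $m$ and multiplier $\mu^\sigma$, and $Z\cap\pf=\varnothing$ forces $Z^\sigma\cap\mathcal{P}_{f^\sigma}=\varnothing$; since $\mu\neq 0$ we have $\mu^\sigma\neq 0$, so $Z^\sigma$ is not superattracting, and being a cycle of a postcritically finite map disjoint from its postcritical set it is therefore repelling. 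In the flexible Latt\`es case not covered by Proposition \ref{prop:algrep}, $\mu\in\Z$ is already Galois-invariant with $|\mu|\geq 2^m>1$, so the norm estimate applies trivially.
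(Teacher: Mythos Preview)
Your proof is correct and follows essentially the same approach as the paper's: reduce to algebraic coefficients via Proposition \ref{prop:algrep} (treating flexible Latt\`es maps separately), observe that Galois conjugation carries the dynamical picture to another postcritically finite map, and conclude by the norm argument that a nonzero $\lambda$ with all conjugates in the open unit disk cannot be an algebraic integer. The only difference is presentational: the paper phrases the Galois step at the operator level (noting $\sigma(\lambda)\in\Lambda_{f^\sigma}\subset\Disk$ via Corollary \ref{coro:lambda}), whereas you unwind it through Proposition \ref{prop:eigmult} into the equivalent statement $|\mu^\sigma|>1$ for every conjugate of the multiplier.
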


\begin{proof}
As discussed in the previous example, the proposition holds for flexible Lattès maps, so we may assume $f$ is not a flexible Lattès map in this proof. 

If $f\in \Rat_D$ and $g\in \Rat_D$ are conjugate by a Möbius transformation $M$; that is, $M\circ f = g\circ M$, then the linear map $M_*:\cQ_f\to \cQ_g$ conjugates $f_*:\qf\to \qf$ to $g_*:\cQ_g\to \cQ_g$: 
\[g_*(M_* \q) = M_*( f_*\q )= M_*(\lambda \q) = \lambda M_*\q.\]
 Thus, $\Lambda_f = \Lambda_g$. 

According to Proposition \ref{prop:algrep}, the conjugacy class of $f$ contains a representative with algebraic coefficients. Without loss of generality, we may therefore assume that this is the case for $f$ and consider $f$ as a rational map $f:\P^1(\overline \Q)\to \P^1(\overline \Q)$. Working over the algebraically closed field $\Qbar$, we deduce that the multipliers of cycles of $f$ are algebraic numbers, so $\Lambda_f$ consists of algebraic numbers. 

If $\sigma:\Qbar\to \Qbar$ is a Galois automorphism, then $g:=\sigma\circ f\circ \sigma^{-1}:\P^1(\Qbar)\to \P^1(\Qbar)$ is postcritically finite. In addition, $f_*\q = \lambda \q$ if and only 
\[g_*(\sigma_* \q) = \sigma_*( f_*\q )= \sigma_*(\lambda \q) = \sigma(\lambda)\  \sigma_*\q.\]
Thus, $\lambda\in \Lambda_f$ if and only if $\sigma(\lambda)\in \Lambda_g$. 
According to Corollary \ref{coro:lambda}, if $\lambda\in \Lambda_f$, then $\bigl|\sigma(\lambda)\bigr|<1$ for any Galois automorphism $\sigma:\Qbar\to \Qbar$. Thus, if $\lambda$ is an algebraic integer, the product of $\lambda$ with its Galois conjugates is an integer of modulus less than $1$, which forces $\lambda=0$.
\end{proof}

\section{The eigenvalues of $f_*:\qf\to \qf$}\label{sec:eigenqf}

\noindent From now on, we assume that $\card(\pf)\geq 4$ so that $\qf$ is not reduced to $\{0\}$. In that case, the dimension of $\qf$ is $\card(\pf)-3$, so $f_*:\qf\to \qf$ has at most $\card(\pf)-3$ eigenvalues. 

\subsection{Lattès maps with four postcritical points}

According to Proposition \ref{prop:noeigen1}, if $\spec_f$ contains an eigenvalue of modulus $1$, then $f$ is a Lattès map with $\card(\pf)=4$. The converse is also true. 

\begin{proposition}
Suppose $f\in \Rat_D$ is a Lattès map with $\card(\pf)=4$. Then $f_*:\qf\to \qf$ is multiplication by $\lambda$ with $|\lambda|=1$.  In addition, $\lambda =\pm 1$, or $\lambda$ is a quadratic number. Any imaginary quadratic number of modulus $1$ may arise for some Lattès map.  If $\lambda$ is an algebraic integer, then $\lambda$ is a root of unity of order 1, 2, 3, 4, or 6.
\end{proposition}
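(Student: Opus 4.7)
The approach is to exploit the one-dimensionality of $\qf$ when $\card(\pf)=4$ and use the Lattès covering to pin down $\lambda$ in terms of the linear part of $L$; the claimed algebraic properties then follow from the fact that this linear part lives in an imaginary quadratic order.

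Since $\dim\qf=\card(\pf)-3=1$, $f_*$ acts on $\qf$ as multiplication by a scalar $\lambda$. Fix a nonzero generator $\q$; it must have a simple pole at every point of $\pf$, since otherwise $\q$ would lie in $\cQ(\Chat;\pf\setminus\{p\})$, which has dimension zero. By Proposition \ref{prop:caraclattes}, every critical point of $f$ is simple (local degree $2$) and $\cf\cap\pf=\emptyset$; counting gives $\card f^{-1}(\pf)=4D-(2D-2)=\card(\pf)+\card(\cf)$, forcing $f^{-1}(\pf)=\pf\sqcup\cf$. By Lemma \ref{ordlemmapull}, the poles of $f^*\q$ lie in $f^{-1}(\pf)\setminus\cf=\pf$, so $f^*\q\in\qf$, say $f^*\q=\mu\q$. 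Meanwhile, applying Lemma \ref{ordlemmapull} to $\Theta$: at each branch point of $\Theta$ over a simple pole of $\q$, the order of $\Theta^*\q$ is nonnegative, so $\Theta^*\q$ is holomorphic on $\bE$, hence equals $c(\d w)^2$ with $c\neq 0$ (the space of holomorphic quadratic differentials on a torus being one-dimensional). With $L(w)=\alpha w+\beta$ we have $L^*(\d w)^2=\alpha^2(\d w)^2$, so
\[\mu\,\Theta^*\q=\Theta^*f^*\q=L^*\Theta^*\q=\alpha^2\,\Theta^*\q,\]
giving $\mu=\alpha^2$. Applying $f_*$ to $f^*\q=\alpha^2\q$ and using $f_*f^*=D\cdot\id$ yields $\alpha^2\lambda=D$. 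Since $L$ has topological degree $D$ one has $|\alpha|^2=\alpha\bar\alpha=D$, hence $\lambda=D/\alpha^2=\bar\alpha/\alpha$ and $|\lambda|=1$.

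Because $L$ preserves $\Lambda$, $\alpha\in\mathrm{End}(\Lambda)$, which is either $\Z$ or an order in an imaginary quadratic field $K$; correspondingly, $\lambda=\bar\alpha/\alpha$ is either rational or lies in $K$. If $\lambda\in\Q$ with $|\lambda|=1$, then $\lambda=\pm 1$; otherwise $\lambda$ is imaginary quadratic. For the converse, given any imaginary quadratic $\lambda$ with $|\lambda|=1$, set $K=\Q(\lambda)$ and apply Hilbert's Theorem 90 to the cyclic extension $K/\Q$: there exists $\alpha\in K^\times$ with $\bar\alpha/\alpha=\lambda$. Clearing denominators and scaling by a positive integer, we may arrange $\alpha\in\mathcal{O}_K$ with $|\alpha|^2\geq 2$. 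The Lattès map built from $\Lambda=\mathcal{O}_K$, $L(w)=\alpha w$ (which intertwines $w\mapsto -w$ with itself), and the Weierstrass quotient $\Theta:\bE\to\bE/\{\pm 1\}\cong\Chat$ then has degree $|\alpha|^2\geq 2$ and four postcritical points, and the calculation above shows it realizes $\lambda$.

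Finally, if $\lambda$ is an algebraic integer, its Galois conjugate over $\Q$ is $\bar\lambda=1/\lambda$, so its minimal polynomial is $x^2-(\lambda+1/\lambda)x+1$ (or linear if $\lambda\in\Q$). Hence $2\,\mathrm{Re}(\lambda)=\lambda+1/\lambda\in\Z\cap[-2,2]=\{-2,-1,0,1,2\}$, yielding $\lambda\in\{\pm 1,\pm\mathrm{i},(\pm 1\pm\mathrm{i}\sqrt{3})/2\}$---the roots of unity of orders $1,2,3,4$, and $6$. The main obstacle is establishing $f^*\q\in\qf$ so that $\mu$ is scalar; this is precisely where Proposition \ref{prop:caraclattes} is indispensable, since without simple critical points and $\cf\cap\pf=\emptyset$ the count $\card f^{-1}(\pf)=\card(\pf\sqcup\cf)$ fails. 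Once $f^*\q=\alpha^2\q$ is in hand, the formula $\lambda=\bar\alpha/\alpha$, the Hilbert 90 realization, and the enumeration in the integer case are all short and standard.
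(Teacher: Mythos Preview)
Your argument is correct and follows the same overall strategy as the paper: pull a generator $\q\in\qf$ back to the torus, where it becomes a nonzero multiple of $(\d w)^2$, and read off the relation $\lambda=D/\alpha^2=\bar\alpha/\alpha$ from the linear part $\alpha$ of $L$. Two differences are worth recording.

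First, to make sense of the scalar $\mu$ with $f^*\q=\mu\q$, you give a direct combinatorial argument that $f^*\q\in\qf$, using Proposition~\ref{prop:caraclattes} and the count $\card f^{-1}(\pf)=4D-(2D-2)=\card(\pf\sqcup\cf)$. The paper instead points back to the calculation in Proposition~\ref{prop:noeigen1}. Your route is more self-contained and avoids any appearance of circularity (that earlier calculation was carried out under the hypothesis $|\lambda|=1$, which is what one is trying to prove here; one can of course extract the relevant non-circular part, but you sidestep the issue entirely).

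Second, and more substantively, your realization argument via Hilbert~90 genuinely improves on the paper's. The paper proposes taking $\alpha:=k\sqrt{\lambda}$ for a sufficiently large integer $k$, but this does not work in general: for $\lambda=\mathrm{i}$ one has $\sqrt{\lambda}=\mathrm{e}^{\mathrm{i}\pi/4}$, a primitive $8$th root of unity of degree $4$ over $\Q$, so $k\sqrt{\lambda}$ lies in no imaginary quadratic field for any $k\in\Z$. Your use of Hilbert~90 for the cyclic extension $K/\Q$ with $K=\Q(\lambda)$ produces $\alpha\in K^\times$ with $\bar\alpha/\alpha=\lambda$ directly from $N_{K/\Q}(\lambda)=|\lambda|^2=1$; clearing denominators and scaling by a positive integer keeps $\bar\alpha/\alpha$ fixed while arranging $\alpha\in\mathcal{O}_K$ with $|\alpha|^2\ge 2$, and the Weierstrass-quotient Latt\`es map built from $L(w)=\alpha w$ on $\C/\mathcal{O}_K$ then realizes $\lambda$. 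This is the right fix.
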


\begin{proof}
Since $\card(\pf)=4$, the dimension of $\qf$ is $1$, so $f_*:\qf\to \qf$ has a unique eigenvalue, and $f_*$ is multiplication by this eigenvalue. By assumption, there is a complex torus $\bE$, a ramified cover $\Theta :\bE\to \Chat$ ramifying at each point above $\pf$ with local degree $2$, and an endomorphism $L:\bE\to \bE$ such that the following diagram commutes: 
\[\diagram
\bE\rto^L\dto_\Theta & \bE \dto^\Theta\\
\Chat\rto_f & \Chat.
\enddiagram
\] 
As mentioned in Proposition \ref{prop:noeigen1}, if $\q\in \qf$, then $\Theta^*\q$ is a multiple of $(\dz)^2$ and if $\lambda$ is the eigenvalue of $f_*:\qf\to \qf$, then $\ds L_*(\Theta^*\q) = \frac{1}{D\lambda} \Theta^*\q$. Thus, if $L(w) = \alpha w + \beta$, then $ \alpha^2= D\lambda$. 

According to \cite[\S5]{jacklattes}, we have $|\alpha|^2=D$, so that $|\lambda|=1$. In addition, either
\begin{itemize}
\item $\alpha\in \Z$ in which case $\lambda=1$, and $f$ is a flexible Lattès map, or
\item $\alpha$ is an imaginary quadratic integer; that is, $\alpha^2-C\alpha+D =0$ with $C\in \Z$ and $C^2<4D$.  
\end{itemize}
In the latter case, $\lambda=\alpha^2/D\in \Q[\alpha]$ is either $-1$ or an imaginary quadratic number of modulus $1$. 

Conversely, suppose $\lambda=-1$ or $\lambda$ is an imaginary quadratic number of modulus $1$. Let $k\geq 2$ be a sufficiently large integer so that $\alpha:=k\sqrt{\lambda}$ is an imaginary quadratic integer and set $D:=k^2$. 
According to \cite[\S5]{jacklattes}, there exists a Lattès map $f\in \Rat_D$ with $L(w) = \alpha w$. According to the previous discussion, the eigenvalue of $f_*:\qf\to \qf$ is $\lambda$.

Finally, if $\lambda$ is a quadratic integer, then it is a unit since $|\lambda|=1$. Thus, it is a root of unity of order 1, 2, 3, 4, or 6.
\end{proof}

\subsection{Non Lattès maps}

We now assume that $f$ is not a Lattès map. In that case, according to  Corollary \ref{coro:lambdainD} and Proposition \ref{prop:noeigen1}, the eigenvalues of $f_*:\qf\to \qf$ are contained in the unit disk. 

\begin{proposition}
Let $f\in \Rat_D$ be postcritically finite with $\card(\pf)\geq 4$, and suppose that $f$ is not a Lattès map. If $\lambda\in \spec_f\sm \{0\}$, then $\lambda$ is an algebraic number but not an algebraic integer.  
\end{proposition}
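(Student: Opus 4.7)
The plan is to adapt the Galois-theoretic strategy of Proposition \ref{prop:algnum}. First, I would observe that $\spec_f$ is invariant under M\"obius conjugacy: if $M\circ f = g\circ M$, then $M_*:\qf\to \cQ_g$ is an isomorphism conjugating $f_*$ to $g_*$. Since $f$ is not a Latt\`es map, in particular not a flexible Latt\`es map, Proposition \ref{prop:algrep} allows me to assume that $f$ has algebraic coefficients, and hence $\pf\subset \P^1(\Qbar)$.

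Next, I would exhibit an explicit $\Qbar$-basis of $\qf$. Fixing three distinct points $x_1,x_2,x_3\in \pf$, for each $y\in \pf\sm\{x_1,x_2,x_3\}$ the quadratic differential $\q_y := \bomega_{y,x_1}\otimes \bomega_{x_2,x_3}$ lies in $\qf$ and has coefficients in $\Qbar$. These $\card(\pf)-3$ differentials are linearly independent, since taking residues at the points of $\pf\sm\{x_1,x_2,x_3\}$ yields a diagonal system, so they form a $\Qbar$-basis of $\qf$ of the right dimension. The formula $f_*\bigl(q\,(\dz)^2\bigr) = r\,(\dz)^2$ with $r(y) = \sum_{x\in f^{-1}(y)} q(x)/f'(x)^2$ shows that $f_*$ has a matrix with entries in $\Qbar$ in this basis, and consequently its eigenvalues are algebraic numbers.

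For the second claim, let $\sigma:\Qbar\to \Qbar$ be a Galois automorphism and set $g:=\sigma\circ f\circ \sigma^{-1}$. Then $g$ is postcritically finite with $\cal P_g = \sigma(\pf)$, so $\card(\cal P_g) = \card(\pf)\geq 4$; moreover, $f_*\q=\lambda\q$ implies $g_*(\sigma_*\q) = \sigma(\lambda)\cdot \sigma_*\q$, so $\sigma(\lambda)\in \spec_g$. The key point is that $g$ is also not a Latt\`es map: if $\card(\pf)\geq 5$ this is automatic since Latt\`es maps have at most four postcritical points; and if $\card(\pf)=4$, the characterization of Proposition \ref{prop:caraclattes} by the algebraic conditions ``every critical point is simple'' and ``no critical point is postcritical'' is manifestly preserved by Galois conjugation. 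Corollary \ref{coro:lambdainD} combined with Proposition \ref{prop:noeigen1} then yields $|\sigma(\lambda)|<1$ for every Galois automorphism $\sigma$. If $\lambda$ were a nonzero algebraic integer, the product of $\lambda$ with its Galois conjugates would be a nonzero rational integer of absolute value less than $1$, a contradiction.

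The step I expect to require the most care is verifying that the matrix of $f_*:\qf\to \qf$ has entries in $\Qbar$ in a convenient basis---this is the only new ingredient relative to Proposition \ref{prop:algnum}, which could appeal directly to multipliers of cycles. Once that algebraicity is in hand, the rest is a straightforward adaptation of the Galois argument, together with the additional but elementary check that the non-Latt\`es hypothesis persists under Galois conjugation.
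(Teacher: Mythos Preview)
Your proposal is correct and follows essentially the same Galois-theoretic argument as the paper's proof. You supply two details the paper leaves implicit: an explicit $\Qbar$-basis of $\qf$ to witness algebraicity of the eigenvalues, and the verification (via Proposition~\ref{prop:caraclattes}) that the non-Latt\`es hypothesis is Galois-invariant, which is needed to conclude $|\sigma(\lambda)|<1$ for the conjugate map $g$.
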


\begin{proof}
We proceed as in the proof of Proposition \ref{prop:algnum}. Conjugating $f$ with a Möbius transformation if necessary, we may assume that $f$ is a rational map $f:\P^1(\Qbar)\to \P^1(\Qbar)$. Working over $\Qbar$, we deduce that the eigenvalues of $f_*:\qf\to \qf$ are algebraic numbers. 

Let $\lambda\in \spec_f$, and let $\sigma:\Qbar\to \Qbar$ be a Galois automorphism. Then, $\sigma(\lambda)\in \spec_g$ with 
$g:=\sigma \circ f\circ \sigma^{-1}:\P^1(\Qbar)\to \P^1(\Qbar)$, so $\bigl|\sigma(\lambda)\bigr|<1$. 
Thus, if $\lambda$ is an algebraic integer, then $\lambda=0$. 
\end{proof}

\subsection{An example where $\spec_f=\{0\}$}

Proposition \ref{0inLambda} establishes that $0\in \Lambda_f$. However, $0$ does not necessarily belong to $\spec_f$. For example, if $f:\C\to \C$ is a polynomial with periodic critical points, then $0\notin\spec_f$ (see \cite{bekp}). 

We now present an example of a postcritically finite rational map $f$ for which $0\in \spec_f$; this example appears in \cite{bekp}. 

\begin{proposition}
Let $f:\Chat\to \Chat$ be the rational map given by
$\ds
f:z\mapsto \frac{3z^2}{2z^3+1}.
$
Then $\spec_f=\{0\}$. 
\end{proposition}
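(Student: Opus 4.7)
The plan is to exploit a rotational symmetry of $f$ to force the unique eigenvalue to vanish.

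First I would identify the postcritical set. A direct computation gives
\[ f'(z) = \frac{6z(1-z^3)}{(2z^3+1)^2}, \]
so $\cf = \{0,1,\omega,\omega^2\}$ with $\omega = e^{2\pi i/3}$, each a simple critical point (accounting for all $2D-2=4$ critical points). Since $f(0)=0$, $f(1)=1$, $f(\omega)=\omega^2$, $f(\omega^2)=\omega$, we get $\pf = \{0,1,\omega,\omega^2\}$. In particular $\cf \subseteq \pf$, so by Proposition~\ref{prop:caraclattes} the map $f$ is \emph{not} Latt\`es.

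Next I would write down $\qf$ explicitly. Since $\card(\pf)=4$, the space $\qf$ has dimension $4-3=1$, generated by the unique quadratic differential with simple poles at the postcritical set and no other poles:
\[
\q_0 \;:=\; \frac{(\dz)^2}{z(z-1)(z-\omega)(z-\omega^2)} \;=\; \frac{(\dz)^2}{z(z^3-1)}.
\]
Thus $f_*:\qf\to\qf$ is multiplication by a single scalar $\lambda$, and the task reduces to showing $\lambda = 0$.

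The key observation is a symmetry. Setting $\alpha(z):=\omega z$, one checks directly that $\alpha\circ f\circ \alpha = f$, because $f(\omega z) = \frac{3\omega^2 z^2}{2z^3+1} = \omega^{-1} f(z)$. Taking pushforwards gives the operator identity $\alpha_*\circ f_*\circ \alpha_* = f_*$ on $\qf$. I would then compute how $\alpha$ acts on $\q_0$: since $\alpha$ permutes $\{1,\omega,\omega^2\}$ cyclically and fixes $0$, and $\alpha^*(\dz)^2=\omega^2(\dz)^2$, one gets $\alpha^*\q_0 = \omega\,\q_0$, whence (as $\alpha$ is an automorphism) $\alpha_*\q_0 = \omega^{-1}\q_0 = \omega^2\,\q_0$.

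Applying the identity $f_* = \alpha_* f_* \alpha_*$ to $\q_0$ now yields
\[
\lambda\,\q_0 \;=\; f_*\q_0 \;=\; \alpha_* f_* \alpha_* \q_0 \;=\; \omega^2\,\alpha_*(\lambda\,\q_0) \;=\; \omega^4\lambda\,\q_0 \;=\; \omega\lambda\,\q_0.
\]
Since $\omega \neq 1$, this forces $\lambda = 0$, and hence $\spec_f = \{0\}$. The main ``obstacle'' is really just spotting the symmetry $\alpha$ and tracking its twist on $\q_0$; once the character $\alpha^*\q_0 = \omega\,\q_0$ is in hand, the conclusion is automatic, and the argument sidesteps any direct computation of the three preimages of a generic point under the cubic $f$.
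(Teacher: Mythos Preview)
Your proof is correct and follows essentially the same approach as the paper: both exploit the order-$3$ rotational symmetry $z\mapsto\omega z$, the intertwining relation $f(\omega z)=\omega^2 f(z)$, and the fact that the generator $\q_0=(\dz)^2/\bigl(z(z^3-1)\bigr)$ of the one-dimensional space $\qf$ transforms by a nontrivial character under this rotation, forcing the sole eigenvalue to vanish. Your added observation that $f$ is not Latt\`es (via Proposition~\ref{prop:caraclattes}) is not needed for the argument but is a harmless consistency check.
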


\begin{proof}
The critical set of $f$ is 
$\cf=\{0,1,\omega,\bar{\omega}\}$, where 
\[\omega:=
-1/2+{\rm i}\sqrt3/2\and \bar{\omega}:= -1/2-{\rm i}\sqrt3/2\] are cube roots
of unity. The postcritical set of $f$ is $\pf=\{0,1,\omega,\bar{\omega}\}$, and $f$ has the following postcritical dynamics. 
\[
\xymatrix{0\ar@(ur,dr)^2}\qquad\xymatrix{1\ar@(ur,dr)^2}\qquad
\xymatrix{\omega\ar@/^1.1pc/[r]^2 & \bar{\omega}\ar@/^1.1pc/[l]^2}
\]
Since $\card(\pf)=4$, the space $\qf$ is $1$-dimensional, and there is a single eigenvalue $\lambda$. 
Consider 
\[\q := \frac{(\dz)^2}{z(z^3-1)}\in \pf, \quad\text{so that}\quad f_*\q = \lambda \q.\]
Let $g:\Chat\to \Chat$ be the rotation $z\mapsto \omega z$. Then, 
\[f\circ g (z) = f(\omega z) = \omega^2 f(z) = g^{\circ  2}\circ f(z).\]
Setting $u=g(z) = \omega z$, we have that
\[g_*\q=g_*\left(\frac{(\dz)^2}{z(z^3-1)}\right) = \frac{(\d u)^2/\omega^2}{u/\omega \cdot (u^3-1)} = \frac{\q}{\omega}.\]
As a consequence, 
\[f_*(g_*\q) = f_*\left(\frac{\q}{\omega}\right) = \frac{f_*\q}{\omega}\quad \text{and}\quad 
g^{\circ 2}_*(f_*\q) = \frac{f_*\q}{\omega^2}.\]
It follows that
\[\frac{f_*\q}{\omega} = \frac{f_*\q}{\omega^2}\]
and since $\omega\neq \omega^2$, we necessarily have $f_*\q = 0$. 
\end{proof}

\section{Characteristic polynomials}\label{sec:charac}

\noindent In this section, the map $f:\Chat\to\Chat$ is postcritically finite with postcritical set $\pf$ and $f(\infty)=\infty$.
Let $d_\infty$ be the local degree of $f$ at $\infty$, and let $\mu_\infty$ be the multiplier of $f$ at $\infty$. Note that $d_\infty\geq 2$ if and only if $\mu_\infty=0$. 

Our goal is to compute the characteristic polynomial $\chi_f$ of $f_*:\qf\to \qf$:
\[\chi_f(\lambda):=\det(\lambda\cdot \id - f_*).\]
Set $X:=\pf\sm \{\infty\}$ and consider the square matrix $A_f$ whose coefficients $a_{y,x}$, indexed by $X\times X$, are defined by: 
\[a_{y,x}:=\sum_{ {w\in  f^{-1}(y)\cap (\cf\cup\{x\})}} \res\left(\frac{\d z}{(z-x)f'(z)},w\right).\]

\begin{proposition}\label{prop:charac}
We have that  $\det(\lambda\cdot {\rm I}-A_f) = \xi_f(\lambda)\cdot \chi_f(\lambda)$ with 
\[\xi_f(\lambda):=\begin{cases}
(\lambda - \mu_\infty)(\lambda-1/d_\infty)&\text{if }\infty\in \pf,\\
(\lambda - \mu_\infty)(\lambda-1/d_\infty)(\lambda-1/\mu_\infty)&\text{if }\infty\not \in \pf.
\end{cases}
\]
\end{proposition}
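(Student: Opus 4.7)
The plan is to exhibit an $f_*$-invariant subspace $E$ of $\cQ(\Chat)$ containing $\qf$ such that $A_f$ is the matrix of $f_*|_E$ in an explicit basis, and then show that the induced operator on the quotient $E/\qf$ has characteristic polynomial exactly $\xi_f$. The short exact sequence $0 \to \qf \to E \to E/\qf \to 0$ then yields the factorization $\det(\lambda \cdot \mathrm{I} - A_f) = \xi_f(\lambda) \chi_f(\lambda)$.

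The natural candidate is $E := \cQ_3(\C) \cap \{\q : \text{poles of } \q \text{ lie in } X\}$, i.e., the span of $\{\q_x := (\dz)^2/(z-x)\}_{x \in X}$, which has dimension $|X|$. Invariance of $E$ under $f_*$ follows from Corollary \ref{coro1} (for the location of poles in $X \cup \{\infty\}$) and Lemma \ref{ordlemmapush} applied at $\infty$, which gives $\ord_\infty(f_*\q) \geq -1/d_\infty - 2 > -3$. For the matrix entries, the residue pairing $\q \mapsto \bigl(\res(\q \otimes \d/\dz, y)\bigr)_{y \in X}$ is inverse to the basis $\{\q_x\}$, and Lemma \ref{lem:pushpull} with $\btheta = \d/\dz$ (noting $f^*(\d/\dz) = (1/f'(z))\d/\dz$) gives
\[\res\bigl((f_*\q_x)\otimes \d/\dz,y\bigr) = \sum_{w \in f^{-1}(y)} \res\!\left(\frac{\dz}{(z-x)f'(z)},w\right),\]
where the residue is supported on $w \in \cf \cup \{x\}$. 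This is precisely $a_{y,x}$, so $A_f$ is the matrix of $f_*|_E$.

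The quotient $E/\qf$ is parametrized by the leading Laurent coefficients at $\infty$. Writing $q(z) = s_0/z + s_1/z^2 + s_2/z^3 + \cdots$, one has $s_j = -\res_\infty(\q \otimes z^j \d/\dz)$; the map $\q \mapsto (s_0,s_1)$ (if $\infty \in \pf$) or $\q \mapsto (s_0,s_1,s_2)$ (if $\infty \notin \pf$) identifies $E/\qf$ with $\C^2$ or $\C^3$. Applying Lemma \ref{lem:pushpull} with $\btheta = z^j \d/\dz$ and using $f^*(z^j \d/\dz) = (f(z)^j/f'(z))\d/\dz$ gives
\[s_j'(f_*\q) = -\sum_{w\in f^{-1}(\infty)} \res_w\!\left(\frac{q(z) f(z)^j}{f'(z)}\dz\right).\]
A case analysis at each finite preimage $w_0$ of $\infty$ (i.e., pole of $f$) shows that these residues vanish: if $w_0$ is a pole of order $d$ of $f$, then $1/f'$ vanishes to order $d+1$ at $w_0$ and $q$ has at most a simple pole there, and one checks $j - d \geq -1$ fails in all relevant sub-cases. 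Thus only $w = \infty$ contributes, and a local computation in the coordinate $u = 1/z$ using $F(u) := 1/f(1/u) = au^{d_\infty} + \cdots$ (with $a = \mu_\infty$ when $d_\infty = 1$) yields the asymptotics
\[\frac{1}{f'(z)} = \mu_\infty + O(1/z^2),\quad \frac{f(z)}{f'(z)} = \frac{z}{d_\infty} + O(1),\quad \frac{f(z)^2}{f'(z)} = \frac{z^2}{\mu_\infty} + O(z),\]
valid in the relevant ranges. Substituting, one reads off that $s_j'$ depends only on $s_0,\ldots,s_j$ (so the matrix is lower triangular) with diagonal entries $\mu_\infty$, $1/d_\infty$, $1/\mu_\infty$ respectively. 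This gives exactly $\det(\lambda \cdot \mathrm{I} - f_*|_{E/\qf}) = \xi_f(\lambda)$.

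The main obstacle is the bookkeeping in the last step: verifying both the lower-triangular structure and the precise diagonal entries requires expanding $f$, $f'$, $f^2/f'$, etc. at $\infty$ to enough orders in $u$ to extract the $1/u$ coefficient of $q(z)f(z)^j/f'(z)\,\dz$. A mild subtlety is the sub-case $\infty \notin \pf$: one must argue that all preimages of $\infty$ in $\C$ are then simple poles of $f$ (otherwise they would be critical points and hence postcritical, forcing $\infty \in \pf$ by forward invariance), which secures the vanishing of the contribution from those points, including for $j=2$.
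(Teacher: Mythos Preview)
Your proof is correct and follows essentially the same route as the paper: your space $E$ is the paper's $\qf^3$, your verification that $A_f$ represents $f_*|_E$ is the paper's Lemma~\ref{lem:ayx}, and your triangular computation on $E/\qf$ via the forms $\q\mapsto\res(\q\otimes z^j\,\d/\dz,\infty)$ is exactly the paper's analysis of the one-dimensional quotients $\qf^k/\qf^{k-1}$ using vector fields $\btheta_k$ vanishing to order $k-1$ at~$\infty$ (note $z^j\,\d/\dz$ vanishes to order $2-j$ there). Two cosmetic slips to clean up: the bound $-1/d_\infty-2>-3$ is an equality when $d_\infty=1$ (the integrality of $\ord_\infty$ still gives what you need), and the expansion $1/f'(z)=\mu_\infty+O(1/z^2)$ should in general only be $\mu_\infty+O(1/z)$, which is all your residue extraction uses.
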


The remainder of \S\ref{sec:charac} is devoted to the proof of this proposition. We first outline a sketch of the proof. 

\medskip
\noindent{\bf Step 1.} 
Instead of working in $\qf$, we introduce the following vector spaces of meromorphic quadratic differentials:
\begin{itemize}
\item $\qf^1$ for those with at worst simple poles at the points in $\pf\cup\{\infty\}$, 
\item $\qf^2$ for those with at worst simple poles at points in $\pf$, and at worst a double pole at $\infty$, and
\item $\qf^3$ for those with at worst simple poles at points in $\pf$, and at worst a triple pole at $\infty$.
\end{itemize}
We will show that each of these spaces is invariant under $f_*$. As subspaces,
\[
\qf\subseteq\qf^1\subset\qf^2\subset\qf^3.
\]
If $\infty\in\pf$, then $\qf=\qf^1$. Otherwise $\dim \qf^1/\qf=1$. In all cases 
\[\dim \qf^3/\qf^2=\dim\qf^2/\qf^1=1.\]

\medskip
\noindent{\bf Step 2.} 
We will show that the eigenvalues of the induced endomorphisms 
\[
\qf^3/\qf^2\to \qf^3/\qf^2,\quad \qf^2/\qf^1 \to \qf^2/\qf^1\and  \qf^1/\qf \to \qf^1/\qf
\]
are given in Table \ref{extraeigentable}. 
\begin{table}[h]\begin{center}
\begin{tabular}{|c |c |c|c|}
\cline{2-4}
\multicolumn{1}{c|}{} & $\qf^3/\qf^2$ & $\qf^2/\qf^1$ & $\qf^1/\qf$ \\
\hline
$\infty\notin\pf$, $d_\infty=1$ & $\mu_\infty$ &1 &1$/\mu_\infty$ \\
\hline
$\infty\in\pf$, $d_\infty=1$  & $\mu_\infty$ & $1$  & none \\
\hline
$\infty\in\pf$, $d_\infty\geq 2$  & $0$ & $1/d_\infty$  & none \\
\hline
\end{tabular}
\end{center}\caption{The eigenvalues of the quotient maps induced by $f_*$ where $\mu_\infty$ is the multiplier of $f$ at $\infty$, and $d_\infty$ is the local degree of $f$ at $\infty$.\label{extraeigentable}}
\end{table}

\medskip
\noindent{\bf Step 3.} We will then compute the eigenvalues of $f_*:\qf^3\to \qf^3$ as follows. The quadratic differentials 
\[
\left\{\q_x:= \frac{\d z^2}{z-x}\right\}_{x\in\pf\sm\{\infty\}}
\]
form a basis of $\qf^3$. According to Lemma \ref{lem:ayx} below, the matrix of $f_*:\qf^3\to \qf^3$ in the basis $\{\q_x\}_{x\in\pf\sm\{\infty\}}$ is the matrix $A_f$.

\medskip
\noindent{\bf Step 4.}
For $k\in \{1,2,3\}$, let $\chi_f^k$ be the characteristic polynomial of $f_*:\qf^k\to \qf^k$, and let $\xi_f^k$ be the characteristic polynomial of $f_*:\qf^k/\qf^{k-1}\to \qf^k/\qf^{k-1}$, with the convention that $\qf^0 := \qf$ and $\xi_f^1=1$ if $\qf^1 = \qf$; that is, if $\infty\not\in \pf$. 

Since
\[
\qf\subseteq\qf^1\subset\qf^2\subset\qf^3 
\]
are invariant by $f_*$, we have 
\[\chi_f^3 = \xi_f^3\cdot \chi_f^2 =   \xi_f^3\cdot\xi_f^2\cdot \chi_f^1 = \xi_f^3\cdot\xi_f^2\cdot \xi_f^1\cdot \chi_f.\]
According to Step 2, 
\[\xi_f^3(\lambda) = \lambda - \mu_\infty,\quad  \xi_f^2(\lambda) = \lambda - \frac{1}{d_\infty},\quad \text{and when }\infty\not\in \pf,\quad  \xi_f^1(\lambda) = \lambda - \frac{1}{\mu_\infty}.\]
Therefore, $\xi_f^3\cdot\xi_f^2\cdot \xi_f^1 = \xi_f$ and $\chi_f^3 = \xi_f\cdot \chi_f$. 
Proposition \ref{prop:charac} follows from Step 3: 
\[ \det(\lambda\cdot {\rm I}-A_f) = \chi_f^3(\lambda) =  \xi_f(\lambda)\cdot \chi_f(\lambda) .\]

We now proceed with the proof working step by step. 

\subsection{Invariant subspaces}\label{invariantsection}

\begin{lemma}
The vector spaces $\qf^1,\qf^2$, and $\qf^3$ are invariant by $f_*$. 
\end{lemma}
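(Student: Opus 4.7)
My plan is to combine Corollary~\ref{coro1}, which controls where a pushforward may have poles, with the quantitative order estimate of Lemma~\ref{ordlemmapush}. Fix $k\in\{1,2,3\}$ and $\q\in\qf^k$. Since the pole set of $\q$ is contained in $\pf\cup\{\infty\}$, Corollary~\ref{coro1} ensures the poles of $f_*\q$ lie in $f(\pf\cup\{\infty\})\cup\vf=\pf\cup\{\infty\}$, using $f(\pf)\subseteq\pf$, $f(\infty)=\infty$, and $\vf\subseteq\pf$. Thus $f_*\q$ is holomorphic outside $\pf\cup\{\infty\}$, and it suffices to bound $\ord_y(f_*\q)$ at the finite points of $\pf$ and at $\infty$.

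For $y\in\pf\setminus\{\infty\}$, every preimage $x\in f^{-1}(y)$ lies in $\C$ (since $f(\infty)=\infty\neq y$), and satisfies $\ord_x\q\geq -1$ — a simple pole if $x\in\pf$, holomorphic otherwise. Hence $(2+\ord_x\q)/\deg_xf>0$, and Lemma~\ref{ordlemmapush} gives $\ord_y(f_*\q)>-2$. Since the order is an integer, $\ord_y(f_*\q)\geq -1$, a simple pole, which is allowed in each of $\qf^1$, $\qf^2$, and $\qf^3$.

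The substantive point is the behavior at $\infty$. For $x\in f^{-1}(\infty)\setminus\{\infty\}$, exactly as above $(2+\ord_x\q)/\deg_xf>0$, so such preimages only push the minimum in Lemma~\ref{ordlemmapush} upward. The one possibly negative contribution is from $x=\infty$ itself, where $\ord_\infty\q\geq -k$ yields $(2+\ord_\infty\q)/\deg_\infty f\geq(2-k)/d_\infty$. Lemma~\ref{ordlemmapush} therefore gives
\[\ord_\infty(f_*\q)\geq -2+\frac{2-k}{d_\infty}.\]
For $k=1$ the right-hand side strictly exceeds $-2$, forcing $\ord_\infty(f_*\q)\geq -1$. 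For $k=2$ it equals $-2$ exactly. For $k=3$ it is at least $-3$ since $d_\infty\geq 1$. Each of these is the bound required for membership in $\qf^k$, so $f_*\q\in\qf^k$ in every case.

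I do not expect a genuine obstacle here: the argument is essentially bookkeeping. The only subtlety is remembering that $\infty$ may have preimages besides itself and checking that these are harmless — which follows at once from the fact that any such preimage is either a simple pole of $\q$ or a point of holomorphy, both of which contribute strictly positively to the minimum in Lemma~\ref{ordlemmapush}.
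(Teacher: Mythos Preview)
Your proof is correct and follows essentially the same approach as the paper's: use Corollary~\ref{coro1} to locate the poles of $f_*\q$ inside $\pf\cup\{\infty\}$, then invoke Lemma~\ref{ordlemmapush} to bound the pole orders at finite postcritical points and at $\infty$. The only cosmetic difference is that the paper dispatches the case $k=1$ by citing Corollary~\ref{coro2} directly, whereas you treat all three values of $k$ uniformly via the same order estimate.
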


\begin{proof}
Suppose $\q\in \qf^k$ with $k\in \{1,2,3\}$. According to Corollary \ref{coro1}, the poles of $f_*\q$ are contained in $f\bigl(\pf\cup \{\infty\}\bigr)\cup \vf = \pf\cup \{\infty\}$. 

Assume $k =1$. Then $\qf^1\subset \cQ(\Chat)$ and by Corollary \ref{coro2}, $f_*\q\in \cQ(\Chat)$, so that the poles of $f_*\q$ are simple. Thus, $f_*\q\in \qf^1$. In other words, $f_*(\qf^1)\subseteq \qf^1$. 

Assume $k\in \{2,3\}$ and $y\in \pf$.  On the one hand, if $x\in f^{-1}(y)\setminus\{\infty\}$, then $2+\ord_x \q\geq 1$ and
\[\frac{2+\ord_x\q}{\deg_x f}-2\geq \frac{1}{\deg_xf}-2>-2.\]
In particular, if $y\neq \infty$, then according to Lemma \ref{ordlemmapush}, $f_*\q$ has at worst a simple pole at $y$. 
On the other hand, if $x=\infty$, then $2+\ord_\infty\q\geq 2-k$ and since $2-k\leq 0$, 
\[\frac{2+\ord_\infty\q}{\deg_\infty f}-2\geq 2-k-2=-k.\]
It follows that $f_* \q$ has at worst a pole of order $k$ at $\infty$.
\end{proof}

\subsection{Extra eigenvalues} \label{subsec:extraeig}
Here, we identify the eigenvalues arising from the induced operators $f_*:\qf^k/\qf^{k-1}\to\qf^k/\qf^{k-1}$ for $k\in \{1,2,3\}$ (see Table \ref{extraeigentable}). The case of $\qf^1/\qf$ is covered by \S\ref{sec:qchat}, more precisely by Lemma  \ref{lem:eigenvalscycle}: if $\infty$ is a fixed point with multiplier $\mu_\infty$ not contained in $\pf$ then $1/\mu_\infty$ is an eigenvalue of $f_*:\qf^1/\qf\to \qf^1/\qf$. We therefore only need to deal with $\qf^2/\qf^1$ and $\qf^3/\qf^2$. 

Fix two vector fields $\btheta_2$ and $\btheta_3$, where $\btheta_k$ is holomorphic near $\infty$ and vanishes to order $k-1$ at $\infty$.  Let $\alpha_k:\qf^k\to \C$ be the form defined by
\[
\alpha_k(\q):= \res(\q\otimes \btheta_k,\infty).
\]
This form is in the annihilator of $\qf^{k-1}$, and as such, $\alpha_k$ may be canonically identified with an element in the dual of the quotient $\qf^k/\qf^{k-1}$. Therefore, if $\lambda_k$ is the eigenvalue of the induced operator  $f_*:\qf^k/\qf^{k-1}\to\qf^k/\qf^{k-1}$, then 
\[
\alpha_k(f_*\q)=\lambda_k\ \alpha_k(\q).
\]
The data presented in Table \ref{extraeigentable} is a consequence of the following lemma. 
\begin{lemma} If $\q\in \qf^3$, then 
\[\alpha_3(f_*\q)=\mu_\infty \ \alpha_3(\q).\] 
If  $\q\in \qf^2$, then 
\[\alpha_2(f_*\q)=\frac{1}{d_\infty}\ \alpha_2(\q).\]
\end{lemma}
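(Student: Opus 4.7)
The plan is to apply Lemma \ref{lem:pushpull} at $y=\infty$, which gives
\[
\alpha_k(f_*\q) \;=\; \sum_{x\in f^{-1}(\infty)} \res\bigl(\q\otimes f^*\btheta_k,\,x\bigr),
\]
and then to show that only the term at $x=\infty$ contributes and produces the claimed scaling factor. The essential local tool is the elementary identity
\[
\ord_x(f^*\btheta) \;=\; \delta(\ell-1)+1,
\qquad \delta := \deg_x f,\quad \ell := \ord_{f(x)}\btheta,
\]
obtained by expanding $f^*(\theta\,\d/\d w) = (\theta\circ f)/f' \cdot \d/\dz$ in local coordinates.

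Applied to $\btheta_k$ (so $\ell = k-1$) at any finite preimage $x\in f^{-1}(\infty)\cap\C$, this gives $\ord_x(f^*\btheta_k) = \delta(k-2)+1 \geq 1$ for $k\in\{2,3\}$. Since $\q$ has at worst a simple pole at $x$, the $1$-form $\q\otimes f^*\btheta_k$ is holomorphic there with zero residue, so only the preimage $\infty$ contributes. At that preimage, fix the natural representatives $\btheta_2 = z\,\d/\dz$ and $\btheta_3 = \d/\dz$, and write $f$ in the coordinate $w = 1/z$ as $\tilde f(w) = \mu_\infty w + O(w^2)$ when $d_\infty = 1$ or $\tilde f(w) = c w^{d_\infty} + O(w^{d_\infty+1})$ when $d_\infty\geq 2$. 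Expanding $\tilde f/\tilde f'$ and $\tilde f^2/\tilde f'$ to low order yields
\[
f^*\btheta_2 \;=\; \frac{1}{d_\infty}\btheta_2 + \btheta_2',
\qquad
f^*\btheta_3 \;=\; \mu_\infty\,\btheta_3 + \btheta_3',
\]
where $\btheta_2'$ vanishes to order $\geq 2$ and $\btheta_3'$ vanishes to order $\geq 3$ at $\infty$; when $d_\infty\geq 2$ one has $\mu_\infty = 0$ and in fact $f^*\btheta_3$ itself vanishes to order $d_\infty + 1\geq 3$.

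Finally, since $\q\in\qf^k$ has $\ord_\infty\q \geq -k$, the correction terms $\q\otimes\btheta_2'$ and $\q\otimes\btheta_3'$ are holomorphic at $\infty$, hence contribute zero residue, and the two asserted identities follow at once. The main obstacle is simply careful bookkeeping: choosing the vector field representatives, expanding $\tilde f$ to just the right order, and verifying uniformly that the $d_\infty = 1$ and $d_\infty\geq 2$ cases both yield the stated formulas via the $\delta(\ell-1)+1$ identity.
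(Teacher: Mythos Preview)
Your proof is correct and follows essentially the same strategy as the paper: apply Lemma~\ref{lem:pushpull} at $\infty$, observe that finite preimages contribute no residue because $f^*\btheta_k$ vanishes there while $\q$ has at worst a simple pole, and then analyze the leading-order behavior of $f^*\btheta_k$ at $\infty$. The only difference is that the paper packages the local computation at $\infty$ into a separate lemma (Lemma~\ref{multiplierlemma}) valid for an arbitrary holomorphic vector field vanishing to a given order, whereas you choose the specific representatives $\btheta_2 = z\,\d/\dz$ and $\btheta_3 = \d/\dz$ and expand explicitly; since $\alpha_k$ is defined via a fixed but arbitrary $\btheta_k$, you should note (as is easy) that the identity is independent of that choice, but this is a minor point of presentation rather than a gap.
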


\begin{proof}
Assume $\q\in\qf^k$, where $k\in \{2,3\}$. Observe that for $x\in f^{-1}(\infty)\sm\{\infty\}$, the $1$-form $\q\otimes f^*\btheta_k$ is holomorphic at $x$. Indeed,  $\btheta_k$ vanishes at $\infty$, $f^*\btheta_k$ vanishes at $x$, and $\q$ has at worst a simple pole at $x$. Therefore, 
\begin{eqnarray*}
\alpha_k(f_*\q )&:=&\res\bigl((f_*\q)\otimes \btheta_k,\infty\bigr)\\
&\underset{\text{Lemma \ref{lem:pushpull}}}=&\sum_{x\in f^{-1}(\infty)} \res(\q\otimes f^*\btheta_k,x)=\res(\q\otimes f^*\btheta_k,\infty).
\end{eqnarray*}

\noindent{\bf Case 1.} If $k=3$, then $\btheta_3$ vanishes to order $2$ at $\infty$. It follows from Lemma \ref{multiplierlemma} below that $f^*\btheta_3- \mu_\infty \btheta_3$ vanishes to order $3$ at $\infty$. 
Since $\q$ has at worst a triple pole at $\infty$, we have 
\[\alpha_3(f_*\q)=\res(\q\otimes f^*\btheta_3,\infty)= \res(\q\otimes \mu_\infty \btheta_3,\infty)=\mu_\infty\alpha_3(\q).\]

\noindent{\bf Case 2.} If $k=2$, then $\btheta_2$ vanishes to order $1$ at $\infty$. It follows from Lemma \ref{multiplierlemma} below that $\ds f^*\btheta_2 - \frac{1}{d_\infty} \btheta_2$
vanishes to order $2$ at $\infty$. Since $\q$ has at worst a double pole at $\infty$, we have 
\[\alpha_2(f_*\q)=\res(\q\otimes f^*\btheta_2,\infty)= \res\left(\q\otimes \frac{1}{d_\infty}\btheta_2,\infty\right)=\frac{1}{d_\infty}\alpha_2(\q).\qedhere\]
\end{proof}

\begin{lemma}\label{multiplierlemma}
Let $f$ be a holomorphic map fixing a point $x$ with multiplier $\mu$ and local degree $d$. Let $\btheta$ be a holomorphic vector field vanishing at $x$ with order $m$.
\begin{itemize}
\item  If $d=1$, then $f^*\btheta-\mu^{m-1}\btheta$ vanishes to order $m+1$ at $x$. 
\item If $m=1$, then $f^*\btheta-\frac{1}{d}\btheta$ vanishes to order $m+1$ at $x$.
\item  If $d\geq 2$ and $m\geq 2$, then $f^*\btheta$ vanishes to order $m+1$ at $x$. 
\end{itemize}
\end{lemma}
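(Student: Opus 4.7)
The plan is to work in local coordinates and compute the leading Taylor terms of $f^*\btheta$ in each case, then subtract off the claimed principal part.

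First, choose a holomorphic coordinate $z$ vanishing at $x$, so $\btheta = \theta(z)\,\d/\dz$ with $\theta(z) = c_m z^m + O(z^{m+1})$ and $c_m \ne 0$. The pullback of a vector field by a locally invertible holomorphic map is
\[
f^*\btheta \;=\; \frac{\theta\circ f}{f'}\cdot \frac{\d}{\dz},
\]
and this formula extends meromorphically through any critical point of $f$; it is the formula I will use throughout. Write $f(z)=\mu z + O(z^2)$ when $d=1$, or $f(z)=c z^d + O(z^{d+1})$ with $c\ne 0$ when $d\ge 2$; note that $d\ge 2$ forces $\mu=0$.

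For the first bullet ($d=1$), one has $f'(z)=\mu + O(z)$, and $\theta\circ f(z) = c_m \mu^m z^m + O(z^{m+1})$. Dividing gives
\[
f^*\btheta \;=\; \bigl(c_m \mu^{m-1} z^m + O(z^{m+1})\bigr)\,\frac{\d}{\dz},
\]
which matches $\mu^{m-1}\btheta$ through order $m$; hence the difference vanishes to order $\ge m+1$. For the second bullet ($m=1$, $d\ge 2$), $\theta\circ f(z) = c_1 c z^d + O(z^{d+1})$ while $f'(z) = cd\,z^{d-1} + O(z^d)$, so
\[
f^*\btheta \;=\; \Bigl(\frac{c_1}{d}\, z + O(z^2)\Bigr)\,\frac{\d}{\dz},
\]
matching $\tfrac{1}{d}\btheta$ to order $1$ and differing at order $\ge 2=m+1$. (When $m=1$ and $d=1$, the first bullet already gives $\mu^{m-1}=1=1/d$, so the two statements agree.) For the third bullet ($d\ge 2$, $m\ge 2$), $\theta\circ f(z) = c_m c^m z^{dm} + O(z^{dm+1})$, so
\[
f^*\btheta \;=\; \Bigl(\frac{c_m c^{m-1}}{d}\, z^{d(m-1)+1} + O(z^{d(m-1)+2})\Bigr)\,\frac{\d}{\dz},
\]
and the elementary inequality $d(m-1)+1 \ge 2(m-1)+1 = 2m-1 \ge m+1$ (valid precisely because $m\ge 2$) shows $f^*\btheta$ vanishes to order $\ge m+1$.

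There is no real obstacle here — the content is just a careful Taylor expansion. The only subtlety is checking coherence across the three cases (noting that the case $d=1$, $m=1$ is covered by both of the first two bullets with $\mu^{0}=1/1$), and using $m\ge 2$ to guarantee $d(m-1)+1\ge m+1$ in the third bullet. I would organize the write-up as the three calculations above, making explicit that the choice of local coordinate does not affect the statement since vanishing orders of vector fields are coordinate-invariant.
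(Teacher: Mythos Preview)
Your proof is correct and follows essentially the same approach as the paper: a local-coordinate Taylor expansion of $f^*\btheta = (\theta\circ f)/f'\cdot \d/\dz$. The only organizational difference is that the paper packages all three cases into the single formula $f^*\btheta = \dfrac{a^{m-1}}{d}\,\zeta^{(d-1)(m-1)}\,\btheta\cdot\bigl(1+O(\zeta)\bigr)$ (with $f(\zeta)=a\zeta^d(1+O(\zeta))$), from which each bullet is read off, whereas you treat the three cases separately; the content is identical.
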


\begin{proof}
Let $\zeta$ be a local coordinate vanishing at $x$. We may write 
\[\zeta\circ f = a\zeta^d\cdot \bigl(1+{\cal O}(\zeta)\bigr)\and \btheta =  b\zeta^m \frac{\d}{\d\zeta}\cdot \bigl(1+ {\cal O}(\zeta)\bigr)\]
with $a\neq 0$ and $b\neq 0$. In addition, if $d=1$, then $a=\mu$. 
Then, 
\[f^*\btheta =  \frac{b a^m \zeta^{dm}}{da\zeta^{d-1}} \frac{\d}{\d \zeta} \cdot \bigl(1+{\cal O}(\zeta)\bigr) = 
\frac{a^{m-1}}{d} \zeta^{(d-1)(m-1)} \btheta\cdot \bigl(1+{\cal O}(\zeta)\bigr).\qedhere\] 
\end{proof}

\subsection{The matrix $A_f$}\label{coefficients}

\begin{lemma}\label{lem:ayx}
The matrix of $f_*:\qf^3\to \qf^3$ in the basis $\{\q_x\}_{x\in\pf\sm\{\infty\}}$ is  $A_f$.
\end{lemma}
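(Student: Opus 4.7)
The plan is to identify the matrix entry of $f_*:\qf^3\to \qf^3$ in the basis $\{\q_x\}_{x\in\pf\sm\{\infty\}}$ by a residue computation, applying the transposition formula of Lemma \ref{lem:pushpull} with the constant vector field $\btheta := \d/\dz$.

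First I would recover coefficients in the basis $\{\q_y\}_{y\in\pf\sm\{\infty\}}$ via residues. Since $\btheta$ is holomorphic at each $y_0\in \pf\sm\{\infty\}$, one has $\q_{y_0}\otimes \btheta = \dz/(z-y_0)$ with residue $1$ at $y_0$, while $\q_{y'}\otimes \btheta$ is holomorphic at $y_0$ for $y'\neq y_0$. Therefore, for any $\q = \sum_{y}c_y \q_y \in \qf^3$, we have $c_{y_0} = \res(\q\otimes \btheta, y_0)$. Applied to $\q = f_*\q_x$, this shows that the $(y,x)$ entry of the matrix of $f_*$ equals $\res\bigl((f_*\q_x)\otimes \btheta, y\bigr)$.

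Second, I would invoke Lemma \ref{lem:pushpull} (applicable since $\btheta$ is globally meromorphic on $\Chat$, holomorphic away from $\infty$) to obtain
\[
\res\bigl((f_*\q_x)\otimes \btheta,\, y\bigr) \;=\; \sum_{w\in f^{-1}(y)}\res\bigl(\q_x\otimes f^*\btheta,\, w\bigr),
\]
where the fiber lies in $\C$ because $y\neq \infty$ and $f(\infty)=\infty$. A short local computation in the coordinate $z$ gives $f^*\btheta = \bigl(1/f'(z)\bigr)\,\d/\dz$, so that
\[
\q_x\otimes f^*\btheta \;=\; \frac{\dz}{(z-x)\,f'(z)}.
\]

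Third, this $1$-form is meromorphic on $\C$ with poles only at $z=x$ and at the critical points of $f$. Hence only the summands with $w\in f^{-1}(y)\cap (\cf\cup\{x\})$ can be nonzero, and their sum coincides with the definition of $a_{y,x}$. I do not expect a substantive obstacle; the only delicate point is the local pullback identity for $\btheta$ near critical points (where $f^*\btheta$ acquires a pole), but these contributions are exactly the summands that build up $a_{y,x}$, not spurious ones.
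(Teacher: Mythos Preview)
Your proposal is correct and follows essentially the same approach as the paper: you use the constant vector field $\btheta=\d/\dz$ to read off the coefficients of $f_*\q_x$ as residues, apply Lemma~\ref{lem:pushpull} to transfer the residue at $y$ to a sum over $f^{-1}(y)$, compute $\q_x\otimes f^*\btheta = \dz/\bigl((z-x)f'(z)\bigr)$, and then discard the regular summands. Your explicit remark that $f^{-1}(y)\subset\C$ because $f(\infty)=\infty$ and $y\neq\infty$ is a nice clarification the paper leaves implicit.
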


\begin{proof}
Since $\{\q_y\}_{y\in \pf\sm\{\infty\}}$ forms a basis of $\qf^3$, we may write
\[
f_*\q_x=\sum_{y\in\pf\sm\{\infty\}} f_{y,x}\cdot \q_y.
\]
We need to show that $f_{y,x} = a_{y,x}$ for all $x,y\in \pf\sm\{\infty\}$. We shall apply Lemma \ref{lem:pushpull} with 
\[
\q:=\q_x\quad  \text{and}\quad  \btheta:=\frac{\d }{\d z}.
\]
Fix $y_0\in\pf\sm\{\infty\}$. Note that $\btheta$ is holomorphic at $y_0$, and for $y\neq y_0$, $\q_y$ is holomorphic at $y_0$. In addition, $\ds \q_{y_0}\otimes\btheta = \frac{\d z}{z-y_0}$. Therefore 
\begin{eqnarray*}
f_{y_0,x}&=&\res\bigl((f_*\q_x)\otimes\btheta,y_0\bigr)\\
&\underset{\text{Lemma \ref{lem:pushpull}}}=&\sum_{w\in f^{-1}(y_0)}\res(\q_x\otimes f^*\btheta,w)\\
&=&\sum_{ {w\in  f^{-1}(y_0)\cap (\cf\cup\{x\})}} \res\left(\frac{\d z}{(z-x)f'(z)},w\right) = a_{y_0,x}.
\end{eqnarray*}
For the third equality, we use that 
\[
\q_x\otimes f^*\btheta=\frac{\d z}{(z-x)f'(z)}
\]
is holomorphic outside of $\cf\cup\{x\}$.
\end{proof}

\section{Periodic unicritical polynomials}\label{sec:unicritical}

\noindent From now on, we shall restrict our study to the case of unicritical polynomials. 
Any such polynomial is conjugate by an affine map to a polynomial of the form 
\[f_c(z):=z^D+c\quad\text{with}\quad c\in \C.\]
The map $f_c$ has a critical point at $z_0=0$ and a critical value at $f_c(0)=c$. 

Such a polynomial is postcritically finite if and only if the critical point $0$ is either periodic or preperiodic. We will restrict our study to the periodic case, and abusing terminology, we shall say that {\em $f_c$ is periodic of period $m$}  if $0$ is periodic of period $m$ for $f_c$. 

\subsection{Families of unicritical polynomials}

Before studying the corresponding sets $\spec_{f_c}$, we introduce some subsets of parameter space. 
The Multibrot set $\M_D$ is defined as 
\[\M_D:=\bigl\{c\in \C~|~\text{the sequence }\bigl(f_c^{\circ n}(0)\bigr)_{n\geq 1}\text{ is bounded}\bigr\}.\]
The set $\M_2$ the set is called the {\em Mandelbrot set}. 

\begin{figure}[htbp]
\centerline{
\includegraphics[width=5cm]{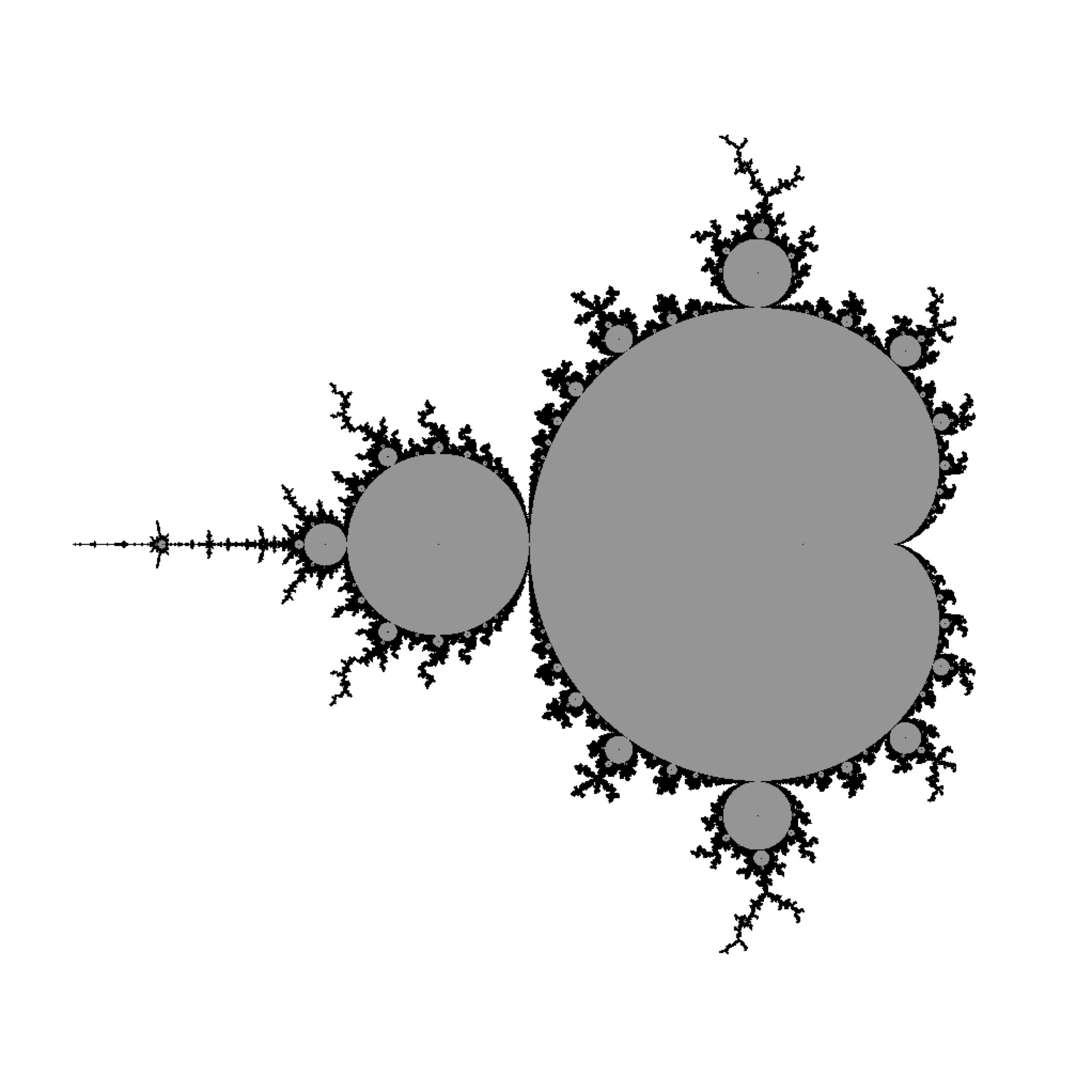}
\includegraphics[width=5cm]{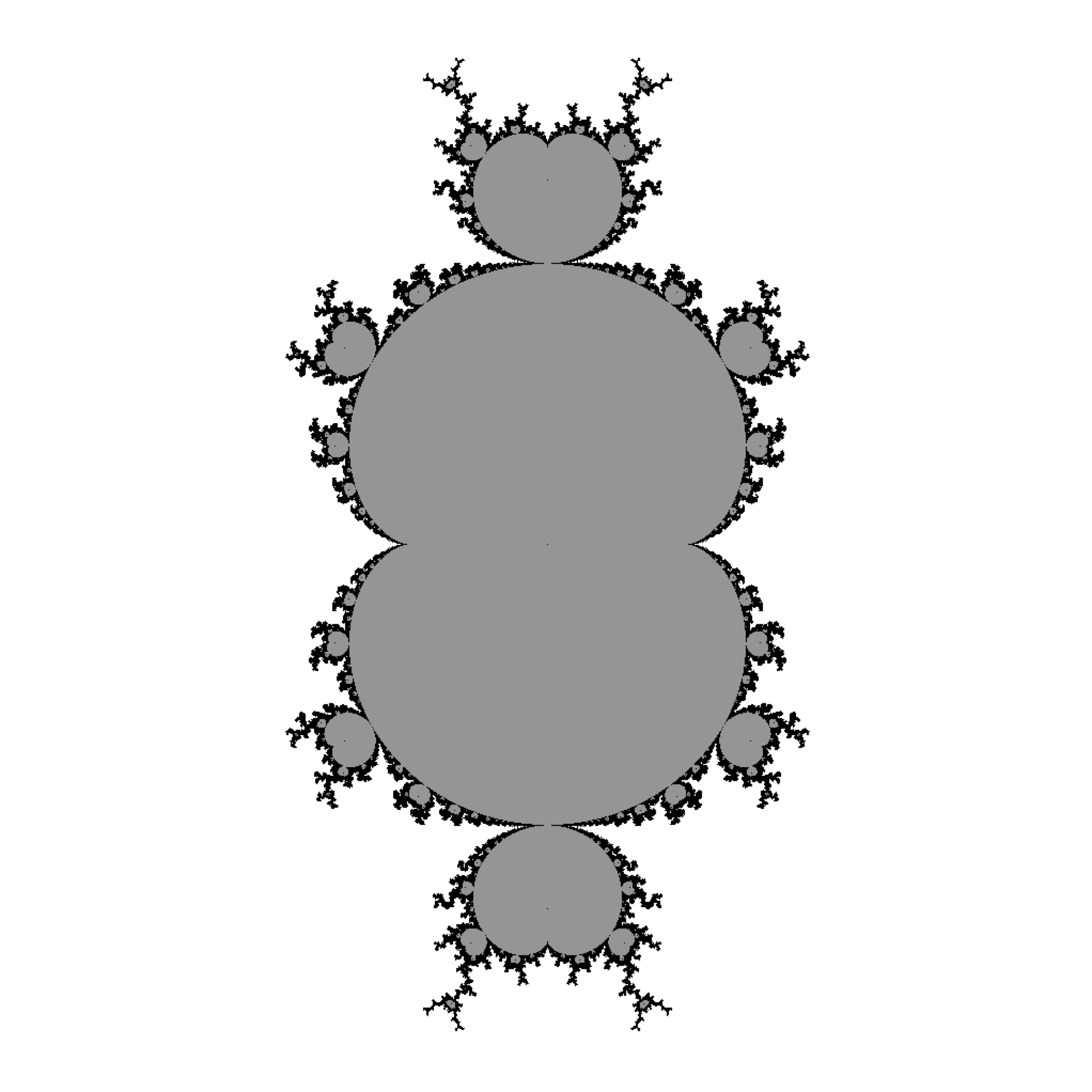}
}
\caption{Left: the Mandelbrot set. Right: the Multibrot set $\M_3$.\label{fig:mandels}}
\end{figure}

Note that the set of parameters $c\in \C$ such that $f_c$ has a periodic critical point is contained in the interior of $\M_D$. 
In addition, each component $U$ of the interior of $\M_D$ contains at most one parameter $c$ such that $f_c$ is periodic; in that case, $c$ is called the center of $U$. The boundary of $\M_D$ is contained in the closure of the set of centers. 

Observe that $\M_D$ has a symmetry of order $D-1$. Indeed, for $c\neq 0$, the linear map $z\mapsto w :=z/c$ conjugates the polynomial $f_c$ to the polynomial 
\[g_b: w\mapsto b w^D + 1\quad\text{with} \quad b:=c^{D-1}.\]
The polynomial $g_b$ has a critical point at $0$ and a critical value at $g_b(0)=1$. 
We set 
\[\MM_D:= \{b\in \C~|~\text{the sequence }\bigl(g_b^{\circ n}(0)\bigr)_{n\geq 1}\text{ is bounded}\bigr\}.\]
Then, $c\in \M_D$ if and only if $c^{D-1}\in \MM_D$. 

\begin{figure}[htbp]
\centerline{
\includegraphics[width=6cm]{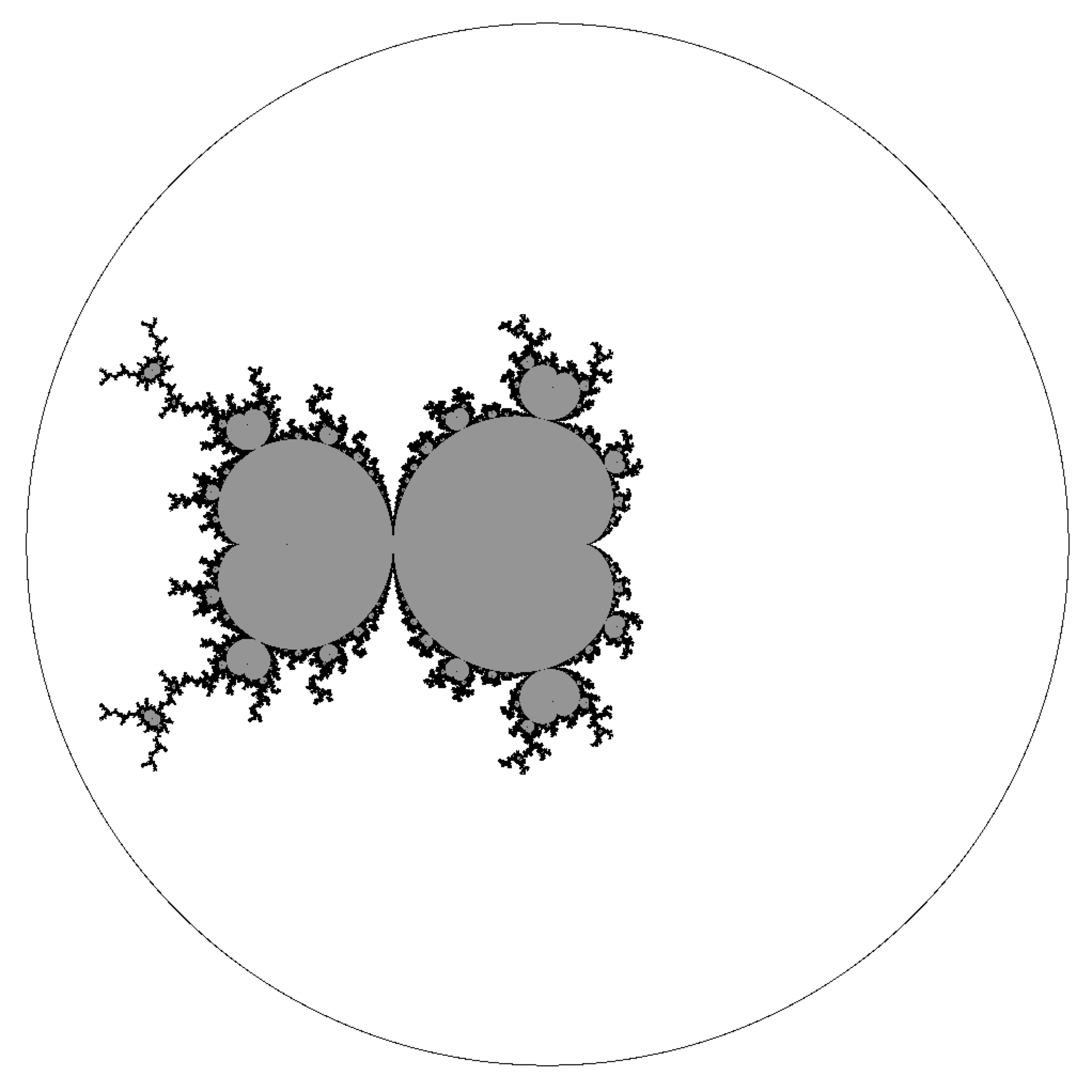}\qquad
\includegraphics[width=6cm]{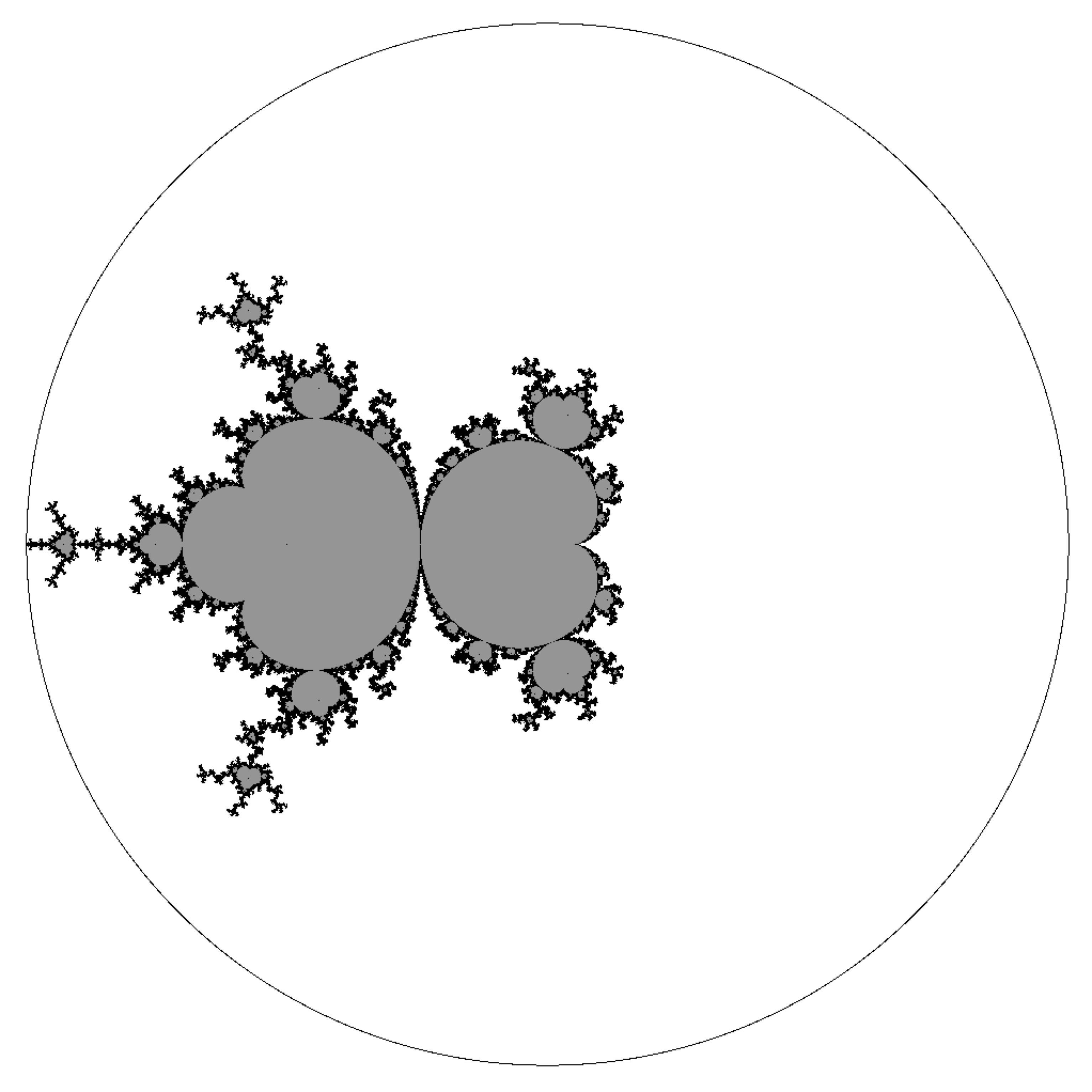}
}
\caption{Left: the set $\MM_3$. Right: the set $\MM_4$. Both are contained in the closed disk $|b|\leq 2$.  \label{fig:nandels}}
\end{figure}

\begin{lemma}
If $b\in \MM_D$, then $|b|<2$.
\end{lemma}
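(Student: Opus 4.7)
The plan is to analyze the orbit $w_n := g_b^{\circ n}(0)$, defined by $w_0 = 0$, $w_1 = 1$, and the recursion $w_{n+1} = bw_n^D + 1$, and to show that large $|b|$ forces the iterates to escape. Assume $b \in \MM_D$ so that the orbit stays bounded, and set $M := \sup_{n\geq 1}|w_n|$. Note that $M \geq 1$ since $|w_1| = 1$.

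The key estimate comes from the reverse triangle inequality applied to the recursion: for every $n$,
\[
|w_{n+1}| \;\geq\; |b|\cdot |w_n|^D - 1.
\]
Since the left-hand side is bounded by $M$, one obtains $|b|\,|w_n|^D \leq M+1$ for every $n$, and then taking the supremum over $n$ yields $|b|M^D \leq M + 1$, equivalently
\[
|b| \;\leq\; \frac{M+1}{M^D} \;=\; M^{1-D} + M^{-D}.
\]
Because $D \geq 2$ and $M \geq 1$, each of $M^{1-D}$ and $M^{-D}$ is at most $1$, so $|b| \leq 2$.

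To sharpen to a strict inequality, I would inspect the equality case. The value $|b| = 2$ would force $M = 1$, hence $|w_n| = 1$ for every $n \geq 1$, and saturation of the reverse triangle inequality would require $bw_n^D$ to be a negative real of modulus $2$; then $w_{n+1} = bw_n^D + 1 = -1$ for all $n \geq 1$, and from $w_2 = b + 1 = -1$ one concludes $b = -2$. For odd $D$ the orbit of $0$ under $g_{-2}$ escapes since $g_{-2}(-1) = 3$, producing a contradiction and yielding the strict bound. The main obstacle is the even $D$ case, where $b = -2$ is a genuine boundary point of $\MM_D$ (the orbit is $0, 1, -1, -1, \ldots$), so the strict statement really requires a further argument beyond the basic escape-radius estimate, or an implicit restriction to the setting in which the lemma is later applied.
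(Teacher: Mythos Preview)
Your argument for the non-strict bound $|b|\leq 2$ is correct and clean. The paper takes a different route: it argues by contrapositive, assuming $|b|>2$, setting $\kappa:=|b|-1>1$, and proving by induction that $|z_{n+1}|\geq \kappa|z_n|>1$ for all $n\geq 1$, so that the orbit escapes geometrically. Your supremum argument is slightly more direct and avoids the induction; both are elementary and yield the same conclusion.

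You have also correctly identified that the strict inequality, as stated, is false: for even $D$ the parameter $b=-2$ lies in $\MM_D$, since the orbit $0\mapsto 1\mapsto -1\mapsto -1\mapsto\cdots$ is bounded. In fact the paper's own proof only establishes $|b|\leq 2$ (it assumes $|b|>2$ and derives a contradiction), and the applications that follow---the corollary bounding $|c|^{D-1}$ and the lemma bounding the filled Julia set---are both stated with the non-strict inequality. So the strict ``$<$'' in the lemma is a minor misstatement in the paper; your analysis of the equality case is accurate, and no further argument is possible for the strict version when $D$ is even.
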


\begin{proof}
Assume $|b|>2$ and set $\kappa := |b|-1>1$. Set $z_n:=g_b^{\circ n}(0)$. Let us prove by induction on $n\geq 1$ 
that $|z_{n+1}|\geq \kappa|z_n|>1$. 
This is true for $n=1$ since $z_1=1$ and 
\[|z_2| = |b+1|\geq |b|-1 = \kappa |z_1|> 1.\]
If the property holds for some $n\geq 1$, then 
\[|z_{n+1}| = | bz_n^D+1|\geq |b| |z_n|^D-1 \geq |b||z_n|-|z_n| = \kappa|z_n|>1.\]
As a consequence, when $|b|>1$, the sequence $(z_n)_{n\geq 1}$ diverges and $b\not\in \MM_D$.
\end{proof}

\begin{corollary}\label{coro:MD}
 If $c\in \M_D$, then $|c|^{D-1}\leq 2$. 
 \end{corollary}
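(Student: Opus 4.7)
The plan is to invoke the preceding lemma directly through the explicit correspondence between $\M_D$ and $\MM_D$ that was established in the paragraph just before the lemma. Recall that for $c\neq 0$, the affine change of variable $w:=z/c$ conjugates $f_c$ to $g_b$ with $b:=c^{D-1}$; from this the equivalence $c\in\M_D \iff c^{D-1}\in \MM_D$ is immediate.

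Given $c\in\M_D$, I would split into two cases. If $c=0$, then $|c|^{D-1}=0\leq 2$, so there is nothing to prove. If $c\neq 0$, set $b:=c^{D-1}$; by the conjugacy above, $b\in \MM_D$, so the lemma yields $|b|<2$, hence $|c|^{D-1}=|b|<2$.

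There is no genuine obstacle here: the corollary is just a translation of the previous lemma through the $(D-1)$-fold symmetry of $\M_D$. The only point to be careful about is that the change of variable $w=z/c$ is undefined at $c=0$, so the case $c=0$ must be handled separately (trivially). Observe also that the lemma in fact gives the strict inequality $|c|^{D-1}<2$ whenever $c\neq 0$; the non-strict statement in the corollary is weaker and therefore not an issue.
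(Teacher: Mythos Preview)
Your proof is correct and is exactly the intended deduction: the paper states this as an immediate corollary of the preceding lemma via the correspondence $c\in\M_D\iff c^{D-1}\in\MM_D$, without spelling out the argument. Your care in treating $c=0$ separately (where the conjugacy $w=z/c$ is undefined) is appropriate, and your observation that the lemma actually yields the strict inequality for $c\neq 0$ is also correct.
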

 
If $f:\C\to \C$ is a polynomial, the filled-in Julia set $K_f$ is defined as 
\[K_f:=\bigl\{z\in \C~|~\text{the sequence }\bigl(f^{\circ n}(z)\bigr)_{n\geq 1}\text{ is bounded}\bigr\}.\]

\begin{lemma}\label{lem:boundkf}
If $c\in \M_D$ and $z\in K_{f_c}$, then $|z|^{D-1}\leq 2$.  
\end{lemma}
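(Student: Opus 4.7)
The plan is to mimic the argument used in the preceding lemma (the bound $|b|<2$ on $\MM_D$), now applied to the dynamics on the filled-in Julia set rather than to the critical orbit. The key input is Corollary \ref{coro:MD}, which tells us $|c|^{D-1}\leq 2$, so in particular $|c|\leq 2^{1/(D-1)}$.

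First I would argue by contradiction: suppose $|z|^{D-1}>2$ for some $z\in K_{f_c}$. Since $|c|^{D-1}\leq 2<|z|^{D-1}$, we have $|c|<|z|$. Set $\kappa:=|z|^{D-1}-1>1$, and write $z_n:=f_c^{\circ n}(z)$ with $z_0=z$.

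The heart of the proof is an induction showing $|z_{n+1}|\geq \kappa|z_n|$ and $|z_n|\geq |z_0|$ for all $n\geq 0$. For the base case,
\[
|z_1|\geq |z_0|^D-|c|\geq |z_0|^D-|z_0|=|z_0|\bigl(|z_0|^{D-1}-1\bigr)=\kappa|z_0|>|z_0|.
\]
For the inductive step, if $|z_n|\geq|z_0|>|c|$, then
\[
|z_{n+1}|\geq |z_n|^D-|c|\geq |z_n|^D-|z_n|=|z_n|\bigl(|z_n|^{D-1}-1\bigr)\geq \kappa|z_n|>|z_n|.
\]
Iterating gives $|z_n|\geq \kappa^n|z_0|\to\infty$, contradicting $z\in K_{f_c}$. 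Therefore $|z|^{D-1}\leq 2$.

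There is no real obstacle here: the estimate is routine once one has the preceding corollary, and the only point requiring a brief check is that one may replace $|c|$ by $|z|$ in the lower bound on $|z_{n+1}|$, which is exactly where $|c|^{D-1}\leq 2<|z|^{D-1}$ is used.
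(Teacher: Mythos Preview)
Your proof is correct and essentially identical to the paper's own argument: both set $\kappa:=|z|^{D-1}-1>1$, use Corollary~\ref{coro:MD} to get $|c|<|z_0|$, and run the same induction to obtain $|z_n|\geq \kappa^n|z_0|\to\infty$. The only cosmetic difference is that you carry the auxiliary hypothesis $|z_n|\geq |z_0|$ while the paper carries $|z_n|>|c|$; since $|z_0|>|c|$, yours is slightly stronger but the mechanics are the same.
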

 
\begin{proof}
Assume $c\in \M_D$ and $|z|^{D-1}>2$. For $n\geq 0$, set 
\[z_n:=f_c^{\circ n}(z)\and \kappa:=|z|^{D-1}-1>1.\] 
We will prove by induction on $n\geq 0$ that  $|z_n|>|c|$ and $|z_{n+1}|\geq \kappa |z_n|$.

According to the previous corollary, $|z_0|^{D-1}>2\geq |c|^{D-1}$. Thus, $|z_0|>|c|$. 
In addition, 
\[|z_{1}| = |z_0^{D}+c|\geq |z_0|^D-|c|>|z_0|\cdot |z_0|^{D-1}-|z_0| = \kappa|z_0|.\]
So, the property holds for $n=0$. 

Now, if the property holds for some $n\geq 0$, we may write 
\[|z_{n+1}|\geq \kappa|z_n|\geq \kappa|c|>|c|\]
and
\[|z_{n+2}| = |z_{n+1}^{D}+c|\geq |z_{n+1}|^D-|c|>|z_{n+1}|\cdot |z_{n+1}|^{D-1}-|z_{n+1}|\geq \kappa |z_{n+1}|,\]
so that the property holds for $n+1$. 

Thus, $|z_n|\geq \kappa^n|z_0|$, the sequence $(z_n)_{n\geq 1}$ is not bounded and $z\not\in K_{f_c}$. 
 \end{proof}

\subsection{Gleason polynomials}\label{sec:gleason}

For $n\geq 0$, let $G_n\in \Z[c]$ be defined by 
\[G_n(c) := f_c^{\circ n}(0).\]
Alternatively, the polynomials $G_n\in \Z[c]$ may be defined recursively by: 
\[G_0:=0\and G_n(c):=G_{n-1}^D(c)+c.\] 
Then, $G_n(c)=0$ if and only if $c$ is a center of period $p$ dividing $n$. 

\begin{example}
For $D=2$, we have 
\[G_1(c) = c,\quad G_2 (c)= c^2+c, \quad G_3(c) = c^4+2c^3+c^2+c.\]
\end{example}

Note that $G_n$ is a monic polynomial. 

\begin{lemma}[Gleason]\label{lem:gleason}
For each $n\geq 0$, the polynomial $G_n$ has simple roots. 
\end{lemma}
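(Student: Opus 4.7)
My plan is to run Gleason's classical $p$-adic argument, exploiting the fact that $G_n'$ is congruent to $1$ modulo $D$ in $\Z[c]$. The key identity comes from differentiating the recursion $G_n(c) = G_{n-1}(c)^D + c$, which gives
\[G_n'(c) = D\,G_{n-1}(c)^{D-1}\,G_{n-1}'(c) + 1.\]
Since the first term on the right has all coefficients divisible by $D$, an easy induction (with base $G_1(c) = c$, $G_1'(c) = 1$) yields $G_n'(c) \equiv 1 \pmod{D\,\Z[c]}$ for every $n \geq 1$.

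To finish, I would proceed as follows. The recursion also shows by induction that $G_n$ is monic in $\Z[c]$, so any root $c_0 \in \C$ of $G_n$ is an algebraic integer; let $\cal O$ denote the ring of integers of the number field $\Q(c_0)$. Because $D \geq 2$, I can pick a rational prime $p$ dividing $D$ together with a prime ideal $\m \subset \cal O$ lying above $p$. Evaluating the congruence above at $c_0$ gives
\[G_n'(c_0) = 1 + D\,\beta, \qquad \beta := G_{n-1}(c_0)^{D-1}\,G_{n-1}'(c_0) \in \cal O,\]
so that $G_n'(c_0) \equiv 1 \pmod{\m}$; in particular $G_n'(c_0) \neq 0$, and $c_0$ is a simple root of $G_n$.

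There is no real obstacle: the whole proof hinges on the elementary observation that the ``$+c$'' in the recursion contributes a ``$+1$'' to $G_n'$ while the other term is automatically divisible by $D$, combined with the fact that $1$ is a unit in the ring of algebraic integers and thus cannot be congruent to $0$ modulo any prime ideal. The only minor point worth noting is the degenerate case $n = 0$, for which $G_0 \equiv 0$ and the statement is vacuous.
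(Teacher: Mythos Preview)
Your argument is correct and follows the same core idea as the paper: differentiate the recursion to obtain $G_n' = D\,G_{n-1}^{D-1}G_{n-1}' + 1 \equiv 1 \pmod{D}$ in $\Z[c]$. The only difference is in the final step---the paper observes that since $G_n$ is monic, the resultant of $G_n$ and $G_n'$ is $\equiv 1 \pmod D$ and hence nonzero, whereas you evaluate at an individual root and reduce modulo a prime ideal above a prime divisor of $D$; these are two phrasings of the same content.
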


\begin{proof}
For each $n\geq 1$,
\[G'_n = DG_{n-1}^{D-1}G'_{n-1} + 1 \equiv 1 \mod D.\] 
In addition, $G_n$ is monic, and so, the resultant of $G_n$ and $G'_n$ is equal to $1 \mod D$. In particular, it does not vanish. 
\end{proof}

As a corollary, we deduce that for $m\geq 1$, there exists a (unique) monic 
polynomial $H_m\in \Z[c]$, 
such that for $n\geq 1$
\[G_n = \prod_{m|n} H_m.\]
The roots of $H_m$ are exactly the centers of period $m$.
The monomial of least degree of $G_n(c)$ is $c$ with coefficient $1$. Since $H_1(c)=c$, we see that for $m\geq 2$ the constant coefficient of $H_m$ is $1$.  In particular, with the exception of $c=0$, centers are algebraic units. 

\begin{example}
For $D=2$, we have $G_4 = H_1H_2H_4$ with 
\[H_1 (c)= c, \quad H_2 (c)= c+1\and H_4 (c)= c^6+3c^5+3c^4+3c^3+2c^2+1.\]
\end{example}

We shall also use the following result due to Bjorn Poonen. 

\begin{lemma}[Poonen]\label{lem:resgleason}
For $m\neq n$, $\result(H_m,H_n)=\pm 1$. 
\end{lemma}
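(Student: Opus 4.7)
The plan is to prove $\result(H_m,H_n)=\pm 1$ by strong induction on $N:=\max(m,n)$. The base case is direct: for $m\geq 2$, $\result(H_1,H_m)=\pm H_m(0)=\pm 1$ since $H_1=c$ and $H_m$ has constant term $1$.

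For the inductive step, assume $m<n$. Since both $H_m$ and $H_n$ are monic, $\result(H_m,H_n)=\pm\prod_{c_0}H_n(c_0)$ where $c_0$ ranges over the roots of $H_m$, so it suffices to express each $H_n(c_0)$ as a product of values $H_d(c_0)^{\pm 1}$ with $d<n$ and $d\neq m$: multiplying over $c_0$ will then express $\result(H_m,H_n)$ as a signed product of smaller resultants, each $\pm 1$ by induction. Starting from $H_n=G_n/\prod_{d\mid n,\,d<n}H_d$ and using the substitution $G_n(c_0)=G_r(c_0)$ for $r=n\bmod m$, which is valid because $f_{c_0}^{\circ m}(0)=0$, the case $m\nmid n$ is straightforward. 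Then $0<r<m$, every factor in sight is nonzero (the only $H_d$ vanishing at $c_0$ is $H_m$, which is absent since $m\nmid n$), and $G_r(c_0)=\prod_{e\mid r}H_e(c_0)$ is a product over $e\leq r<m$, each satisfying $e\neq m$.

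The main obstacle is the case $m\mid n$, where $r=0$, both $G_n(c_0)=0$ and $H_m(c_0)=0$, and the naive substitution is $0/0$. To resolve it, I would analyse leading orders at $c_0$. Writing $z_k(c):=G_k(c)$, the recursion $z_{k+1}=z_k^D+c$ gives $z_{k+1}'(c_0)=\mu_k z_k'(c_0)+1$ with $\mu_k:=Dz_k(c_0)^{D-1}$, so the difference $A_k:=z_{m+k}'(c_0)-z_k'(c_0)$ obeys $A_{k+1}=\mu_k A_k$. Since $z_0(c_0)=0$ forces $\mu_0=0$, induction yields $A_k=0$ for all $k\geq 1$; therefore $G_{mk}'(c_0)=G_m'(c_0)$ for every $k\geq 1$, so $G_n$ and $G_m$ share the leading behaviour $G_m'(c_0)(c-c_0)+O((c-c_0)^2)$ at $c_0$. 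Combining this with $G_m'(c_0)=H_m'(c_0)\prod_{d\mid m,\,d<m}H_d(c_0)$, obtained by differentiating $G_m=\prod_{d\mid m}H_d$ at $c_0$ where only $H_m$ vanishes, and cancelling the common factors indexed by divisors of $m$ smaller than $m$, yields
\[
H_n(c_0)=\prod_{d\mid n,\,d<n,\,d\nmid m}H_d(c_0)^{-1}.
\]

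Taking the product over the roots $c_0$ of $H_m$ gives $\result(H_m,H_n)=\pm\prod_{d\mid n,\,d<n,\,d\nmid m}\result(H_m,H_d)^{-1}$. Every index on the right satisfies $d<n$ and $d\neq m$ (since $d\nmid m$ rules out $d=m$), so $\max(m,d)<n$ and the inductive hypothesis gives $\result(H_m,H_d)=\pm 1$. The analogous formula in the $m\nmid n$ case also reduces to smaller resultants, so the induction closes and the lemma follows.
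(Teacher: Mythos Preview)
Your proof is correct, but it takes a genuinely different route from the paper's. The paper works entirely in $\Z[c]$: it proves by an easy induction that $G_{m+k}\equiv G_k\pmod{G_m^D}$, deduces $G_{mn}\equiv G_m\pmod{G_m^D}$, and combines this with the divisibility $G_mH_n\mid G_{mn}$ (valid since $n>m$ forces $H_n$ to be a factor of $G_{mn}$ disjoint from those of $G_m$) to obtain a B\'ezout identity $AH_n-BG_m^{D-1}=1$ in $\Z[c]$. Coprimality of $H_m$ and $H_n$ in $\Z[c]$, hence $\result(H_m,H_n)=\pm1$, is then immediate. No induction on resultants is needed, and the cases $m\mid n$ and $m\nmid n$ are handled uniformly.

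Your argument instead evaluates at the roots $c_0$ of $H_m$ and runs a strong induction on $\max(m,n)$, reducing $\result(H_m,H_n)$ to a product of resultants $\result(H_m,H_d)$ with $d<n$. The case split $m\mid n$ versus $m\nmid n$ is unavoidable in your setup, and the derivative identity $G_n'(c_0)=G_m'(c_0)$ you prove to resolve the $0/0$ in the divisible case is in fact a consequence of the paper's stronger congruence $G_n\equiv G_m\pmod{G_m^D}$ (which gives agreement to order $D\geq 2$ at each root). Your approach is longer but entirely self-contained and has the modest advantage of making the recursive structure among the resultants explicit; the paper's approach is slicker and avoids any case analysis. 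One small point of exposition: when you cancel the factors $\prod_{d\mid m,\,d<m}H_d(c_0)$, you are implicitly using the inductive hypothesis (or Gleason's lemma) to know these are nonzero; this is fine but worth stating.
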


\begin{proof} Assume $n>m$. 
It is not hard to see by induction on $k\geq 1$, that 
\[G_{m+k}\equiv G_k\mod G_m^D.\]
This implies that, $G_{m n}\equiv G_m \mod G_m^D$. Since $m<n$,  $G_m H_n$ divides $G_{mn}$. So, there are polynomials $A\in \Z[c]$ and $B\in \Z[c]$ such that 
\[ AG_m H_n = G_{mn} = G_m + B G_m^D.\]
Dividing by $G_m$ yields $AH_n-BG_m^{D-1} = 1$. It follows that $H_n$ and $H_m$ are relatively prime in $\Z[c]$ and $\result(H_m,H_n)=\pm 1$. 
\end{proof}

\begin{remark}
For $m\geq 2$, we have $H_m(c) = J_m(c^{D-1})$ for some polynomial $J_m\in \Z[b]$. It might be tempting to conjecture that for all $D\geq 2$ and all $m\geq 2$, the polynomial $J_m$ is irreducible. However for $D=7$, $J_3$ is reducible: 
\[J_3(b) = (b^2+b+1)(b^6+6b^5+14b^4+15b^3+6b^2+1).\]
This is further discussed in \cite{Xnote}. 
\end{remark}

\subsection{Periodic points}

\begin{proposition}\label{prop:largeeig}
Let $f$ be a periodic unicritical polynomial of degree $D$. If $\lambda\in \Lambda_f\sm\{0\}$, then $D\lambda$ is an algebraic unit and $\frac{1}{2D} \leq |\lambda|< 1$. 
\end{proposition}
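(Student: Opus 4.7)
The starting point is Proposition \ref{prop:eigmult}, which identifies $\lambda\in\Lambda_f\sm\{0\}$ with a cycle $\{z_0,\ldots,z_{m-1}\}$ of $f=f_c$ not contained in $\pf$, via $1/\lambda^m=\mu$ where $\mu$ is its multiplier. For $f_c(z)=z^D+c$ one has $f_c'(z)=Dz^{D-1}$, so $\mu=D^m P^{D-1}$ with $P:=\prod_{j=0}^{m-1} z_j$; rearranging yields the key relation
\[(D\lambda)^m\cdot P^{D-1}=1.\]

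The bounds on $|\lambda|$ are the easier part. The upper bound $|\lambda|<1$ is immediate from Corollary \ref{coro:lambda}. For the lower bound, each $z_j$ belongs to the filled Julia set $K_{f_c}$, so by Lemma \ref{lem:boundkf}, $|z_j|^{D-1}\leq 2$. Hence $|\mu|\leq D^m\cdot 2^m=(2D)^m$, giving $|\lambda|\geq 1/(2D)$.

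The substantial task is to show that $D\lambda$ is an algebraic unit. Since the critical period is some $p$, the parameter $c$ is a root of the monic integer polynomial $H_p$, hence an algebraic integer (and, for $p\geq 2$, even a unit since $H_p$ has constant term $\pm 1$; the case $p=1$, which is $z\mapsto z^D$, is trivial because every non-critical periodic point is a root of unity). Each $z_j$ is then a root of the monic polynomial $f_c^{\circ m}(z)-z\in\Z[c][z]$, hence an algebraic integer; so is $P$. By the key relation, $D\lambda$ is a unit if and only if $P$ is a unit.

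To show $P$ is a unit, I would group it with the other non-critical cycles. Let $\Pi$ denote the product of $\prod_{z\in\gamma}z$ over all cycles $\gamma$ of $f_c$ of period dividing $m$ that are not contained in $\pf$; then $P$ is one of the factors. Since each factor is an algebraic integer and any product of algebraic integers equal to a unit forces each factor to be a unit, it suffices to prove $\Pi$ is a unit. The quantity $\Pi$ can be read off from a suitable factor of $f_c^{\circ m}(z)-z$. If $p\nmid m$ then no critical-orbit point is periodic of period dividing $m$, and $\Pi=\pm G_m(c)$. If $p\mid m$, then a direct derivative computation using $f_c'(0)=0$ shows $z=0$ is a simple root of $f_c^{\circ m}(z)-z$, so $f_c^{\circ m}(z)-z=z\tilde P(z)$ with $\tilde P(0)=-1$; dividing out the critical orbit contribution $\prod_{k=1}^{p-1}G_k(c)$ yields $\Pi=\pm\bigl(\prod_{k=1}^{p-1}G_k(c)\bigr)^{-1}$.

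The main obstacle is then proving that these $G$-products are units, and here I would invoke Poonen's Lemma \ref{lem:resgleason}: the identity $\result(H_k,H_p)=\pm 1$ for $k\neq p$, combined with monicity, gives $\prod_{c':\,H_p(c')=0}H_k(c')=\pm 1$, and each factor being an algebraic integer forces each $H_k(c)$ (for $k\neq p$) to be a unit. The factorization $G_n=\prod_{k\mid n}H_k$ then shows $G_n(c)$ is a unit whenever $p\nmid n$, covering both $G_m(c)$ (when $p\nmid m$) and each $G_k(c)$ for $1\leq k\leq p-1$ (since $k<p$ implies $p\nmid k$). This would conclude that $\Pi$, hence $P$, hence $D\lambda$, is an algebraic unit.
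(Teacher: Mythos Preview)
Your argument is correct. The bound $\frac{1}{2D}\le|\lambda|<1$ is obtained exactly as in the paper (via Corollary~\ref{coro:lambda} and Lemma~\ref{lem:boundkf}).

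For the unit claim the two proofs diverge in organization, though both ultimately rest on Poonen's Lemma~\ref{lem:resgleason}. The paper forms the resultant over the parameter: with $c$ varying over the roots of $H_p$, it sets $S(z)=\result\bigl(H_p,\,f_c^{\circ m}(z)-z\bigr)$ (respectively $\result\bigl(H_p,\,(f_c^{\circ m}(z)-z)/z\bigr)$ when $p\mid m$), observes that $S\in\Z[z]$ is monic with $S(0)=\result(H_p,G_m)=\pm 1$ (respectively $\pm 1$ trivially), and concludes that each periodic point $z_j$, being a root of $S$, is a unit. You instead stay at the single parameter $c$, pool all non-postcritical cycles of period dividing $m$ into a single product $\Pi$, identify $\Pi$ with $\pm G_m(c)$ (or $\pm\bigl(\prod_{k=1}^{p-1}G_k(c)\bigr)^{-1}$), and only then invoke Poonen's lemma to see that each $H_k(c)$ with $k\neq p$, hence each relevant $G_n(c)$, is a unit. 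The paper's route is slightly shorter and yields the stronger conclusion that every individual $z_j$ is a unit; your route is pleasantly intrinsic to the fixed dynamical plane and isolates exactly where the arithmetic input enters.

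Two minor remarks. First, when $p\mid m$ you check that $z=0$ is a simple root of $f_c^{\circ m}(z)-z$, but when you ``divide out the critical orbit contribution'' you are also using that each $G_k(c)$, $1\le k\le p-1$, is a simple root; this follows by the same chain-rule computation, since the critical cycle has multiplier $0$ and hence $(f_c^{\circ m})'=0\neq 1$ at every point of it. Second, in that same case you do not actually need Poonen's lemma: once you know $\Pi\cdot\prod_{k=1}^{p-1}G_k(c)=\pm 1$ with both factors algebraic integers, both are units automatically. Poonen's lemma is genuinely needed only in the case $p\nmid m$, to see that $G_m(c)$ is a unit.
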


\begin{proof}
According to Proposition \ref{prop:eigmult},  $\lambda\in \Lambda_f\sm\{0\}$ if and only if $1/\lambda^n$ is the 
multiplier of a cycle of period $n$ not contained in $\pf$, i.e. a repelling cycle of $f_c$. In particular, $|\lambda|<1$. 

Assume the critical point of $f$ is periodic of period $m$. Conjugating $f$ with an affine map, we may assume that 
\[f(z) = f_c(z) := z^D+c\quad\text{with}\quad H_m(c)=0.\] 
Now, let $z_1\mapsto z_2\mapsto \cdots \mapsto z_n\mapsto z_1$ be a cycle of period $n$. The multiplier of the cycle is
\[\mu = D^n (z_1z_2\cdots z_n)^{D-1}.\]
According to Lemma \ref{lem:boundkf}, $|z_j|^{D-1}\leq 2$, so that $|\mu|\leq (2D)^n$. Thus, it suffices to prove that the points $z_j$ are algebraic units. 

Let us first assume that $m$ does not divide $n$, so that $f_c^{\circ n}(0)\neq 0$. 
The points $z_j$ are roots of the polynomial $f_c^{\circ n}(z)-z\in \Z[c,z]$. 
Denote by $R_z$ the polynomial $f_c^{\circ n}(z)-z$ considered as a polynomial of the variable $c$ with coefficients 
in $\Z[z]$ and set 
\[S(z):=\result\bigl(H_m,R_z\bigr) = \prod_{H_m(c)=0} \bigl(f_c^{\circ n}(z)-z\bigr).\]
Note that, as a product of monic polynomials, $S$ is a monic polynomial. In addition, $f_c^{\circ n}(0) = G_n(c)$ and the constant coefficient of $S$ is 
\[S(0) =  \prod_{H_m(c)=0} \bigl(f_c^{\circ n}(0)-0\bigr) =  \prod_{H_m(c)=0} G_n(c) =   \result (H_m,G_n).\] 
According to Lemma \ref{lem:resgleason}, since $m$ does not divide $n$, this resultant  is equal to $±1$.
This shows that the points $z_j$ are algebraic units. 

Let us now assume that  $m$ divides $n$. Then, the constant coefficient of $f_c^{\circ n}(z)-z$ vanishes and 
\[g_c(z):=\frac{f_c^{\circ n}(z) - z }{z}\]
is a monic polynomial  with constant coefficient $g_c(0) = -1$. 
Denote by $R_z$ the polynomial $g_c(z)$ considered as a polynomial of the variable $c$ with coefficients 
in $\Z[z]$ and set 
\[S(z):=\result\bigl(H_m,R_z\bigr) = \prod_{H_m(c)=0} g_c(z).\]
Again, $S$ is a monic polynomial and its constant coefficient is 
\[S(0) =  \prod_{H_m(c)=0} g_c(0) = ±1.\]
Thus, the points $z_j$ are algebraic units. 
\end{proof}

\subsection{Characteristic polynomials}

We assume that $c$ is a center of period $m\geq 3$ and  $f:=f_c$, so that $\dim(\qf) = m-2\geq 1$. 

Note that $f(\infty) = \infty\in \pf$. We denote by $\chi_f$ the characteristic polynomial of $f_*:\qf\to \qf$ and for $k\in \{2,3\}$, we denote by $\chi_f^k$ the characteristic polynomial of $f_*:\qf^k\to \qf^k$. The local degree of $f$ at $\infty$ is $D$ and the multiplier of $f$ at $\infty$ is $0$. 
According to \S\ref{sec:charac}, we have 
\[\chi_f^3(\lambda) = \lambda\cdot \chi_f^2 (\lambda)= \lambda\left(\lambda-\frac{1}{D}\right)\chi_f(\lambda).\]

For $n\in \Z/m\Z$, set 
\[\zeta_n:=f^{\circ n}(0),\quad \delta_n:= f'(\zeta_n) = D \zeta_n^{D-1}.\]
Set 
\[\Delta_1 := \delta_1,\quad\Delta_2:=\delta_1\delta_2,\quad\ldots,\quad  \Delta_{m-1} := \delta_1\delta_2\cdots \delta_{m-1}.\]

\begin{proposition}
We have that 
\[\chi_f^2(\lambda) = \lambda^{m-1} + \frac{1}{\Delta_1}\lambda^{m-2} + \cdots + \frac{1}{\Delta_{m-2}} \lambda + \frac{1}{\Delta_{m-1}}.\]
\end{proposition}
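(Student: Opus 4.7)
The plan is to compute the $m\times m$ matrix $A_f$ of $f_*\colon \qf^3\to\qf^3$ in the basis $\{\q_{\zeta_n}\}_{n=0}^{m-1}$ provided by Lemma~\ref{lem:ayx}, observe that the column indexed by $\zeta_0$ vanishes, and reduce the statement to the evaluation of an $(m-1)\times(m-1)$ determinant via a single column operation. Throughout, I use the relation $\chi_f^3(\lambda)=\lambda\,\chi_f^2(\lambda)$ recorded just before the statement, which is valid because $\mu_\infty=0$ by the factorization established in \S\ref{sec:charac}.

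First I would compute the entries $a_{y,x}$. Since $f(z)=z^D+c$ gives $f'(z)=Dz^{D-1}$ and $\cf=\{0,\infty\}$, the only finite critical point is $0=\zeta_0$, and its image is $c=\zeta_1$. For $x=\zeta_n$ with $n\geq 1$, the defining sum receives a contribution $1/\delta_n$ from $w=\zeta_n$ precisely when $y=\zeta_{(n+1)\bmod m}$, and a contribution from $w=0$ precisely when $y=\zeta_1$. To evaluate the latter, I would use that the meromorphic $1$-form $\frac{\d z}{(z-\zeta_n)f'(z)}$ on $\Chat$ has only two finite poles (at $0$ and $\zeta_n$) and vanishes at $\infty$ to order $D-2\geq 0$, so the residue theorem gives $\res_0=-1/f'(\zeta_n)=-1/\delta_n$. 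For $n=0$, the unique preimage of any $\zeta_j\in \pf\sm\{\infty\}$ lying in $\cf$ is $0\in f^{-1}(\zeta_1)$, and a direct computation shows $\res_0\frac{\d z}{z\,f'(z)}=\res_0\frac{\d z}{Dz^D}=0$ for $D\geq 2$; hence the column of $A_f$ indexed by $\zeta_0$ is identically zero.

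Since the $\zeta_0$-column of $\lambda I-A_f$ has $\lambda$ on the diagonal and zeros elsewhere, cofactor expansion along it yields $\chi_f^3(\lambda)=\lambda\,\det(\lambda I-B)$, so $\chi_f^2(\lambda)=\det(\lambda I-B)$, where $B$ is the $(m-1)\times(m-1)$ submatrix of $A_f$ indexed by $\{\zeta_1,\dots,\zeta_{m-1}\}$: it has entry $-1/\delta_n$ in position $(1,n)$ for $n=1,\dots,m-1$, entry $1/\delta_n$ in position $(n+1,n)$ for $n=1,\dots,m-2$, and zeros elsewhere. I then apply the column operation
\[
C_1\;\longleftarrow\;C_1+\sum_{k=2}^{m-1}\frac{1}{\lambda^{k-1}\Delta_{k-1}}\,C_k
\]
to $\lambda I-B$. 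Using $\Delta_k=\delta_k\Delta_{k-1}$, the contributions from $C_{k-1}$ and $C_k$ cancel pairwise in every row $i\geq 2$, leaving the new $(1,1)$-entry equal to $a:=\lambda+\sum_{k=1}^{m-1}\frac{1}{\lambda^{k-1}\Delta_k}$ and zeros below it. The lower-right $(m-2)\times(m-2)$ block (rows and columns $\geq 2$) is lower triangular with $\lambda$ on the diagonal, so has determinant $\lambda^{m-2}$, whence $\det(\lambda I-B)=a\,\lambda^{m-2}=\lambda^{m-1}+\sum_{k=1}^{m-1}\frac{\lambda^{m-1-k}}{\Delta_k}$, which is the stated formula.

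The main bookkeeping hurdle is the residue calculation at the critical point $0$, confirming the sign in $a_{\zeta_1,\zeta_n}=-1/\delta_n$ and verifying that the $\zeta_0$-column of $A_f$ vanishes identically. Once these residues are in hand, the rest of the argument is elementary linear algebra.
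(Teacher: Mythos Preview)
Your proof is correct and follows essentially the same route as the paper: compute the matrix $A_f$ of $f_*$ on $\qf^3$ in the basis $\{\q_{\zeta_n}\}$, observe that the $\zeta_0$-column vanishes so that $\chi_f^3(\lambda)=\lambda\,\chi_f^2(\lambda)$, and then evaluate the remaining $(m-1)\times(m-1)$ determinant. Your use of the global residue theorem on $\Chat$ to read off $\res_0\frac{\d z}{(z-\zeta_n)f'(z)}=-1/\delta_n$ is a clean alternative to directly expanding $f_*\q_n$ as the paper does, and your column operation gives a more transparent determinant computation than the paper's nested cofactor expansion $(\lambda+a_1)\lambda^{m-2}+a_1a_2(\lambda^{m-3}+\cdots)$; both yield the same answer. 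One cosmetic remark: the column operation involves division by powers of $\lambda$, so strictly speaking you obtain the identity for $\lambda\neq 0$ and extend to all $\lambda$ by continuity (both sides being polynomials); this is harmless but worth a word.
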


\begin{proof}
For $n\in \Z/m\Z$, set 
\[\q_n:=\frac{(\dz)^2}{z-\zeta_n}.\]
The matrix $A_f$ of $f_*:\qf^3\to \qf^3$ in the basis $\{\q_n\}_{n\in \Z/m\Z}$ is provided by \S\ref{sec:charac}. 
We have 
\[
f_*\q_n=\begin{cases}
0&\text{if }n=0\text{ and}\\
\ds \frac{1}{\delta_n}\q_{n+1}-\frac{1}{\delta_n}\q_1& \text{if } n\neq 0.
\end{cases}
\]
and so 
\[A_f:=\begin{bmatrix}
0 & 0 & 0& \cdots & 0 &a_{m-1}\\
0 &- a_1&- a_2 & \cdots  & -a_{m-2} & -a_{m-1}\\
0& a_1 & 0 &0 & \cdots & 0\\
\vdots & 0 & a_2 & \ddots & \ddots& \vdots\\
\vdots & &\ddots&\ddots & 0 & 0\\
0 & \cdots & \cdots & 0 &a_{m-2} & 0
\end{bmatrix}\quad \text{with}\quad a_n:=\frac{1}{\delta_n}.\]
The characteristic polynomial of $A_f$ is $\chi_f^3 (\lambda)= \lambda\chi_f^2(\lambda)$, so that 
\begin{align*}
\chi_f^2(\lambda) &= (\lambda+a_1)\lambda^{m-2}+a_1a_2\bigl(\lambda^{m-3} + a_3(\lambda^{m-4}+\cdots)\bigr)\\
&= \lambda^{m-1} + \frac{1}{\Delta_1}\lambda^{m-2} + \cdots + \frac{1}{\Delta_{m-2}} \lambda + \frac{1}{\Delta_{m-1}}.\qedhere
\end{align*}
\end{proof}

It shall be convenient to work with the polynomial $\chi_f^2/\chi_f^2(0)$
which has the same zeros as $\chi_f^2$. 
Setting 
\[\Delta_0:=1,\quad \Delta_{-1} := \delta_{m-1}, \quad \Delta_{-2} := \delta_{m-1}\delta_{m-2},\quad \ldots,\quad \Delta_{-(m-1)} := \delta_{m-1}\delta_{m-2}\cdots \delta_{1},\]
we get that for $n\in \ob1,m-1\cb$, 
\[\Delta_n\cdot \Delta_{-(m-1-n)} = \Delta_{m-1} = \frac{1}{\chi_f^2(0)},\] so that 
\[\frac{\chi_f^2(\lambda)}{\chi_f^2(0)} = 1+\Delta_{-1}\lambda+\Delta_{-2}\lambda^2+\cdots+\Delta_{-(m-1)}\lambda^{m-1}.\]

The polynomials $G_n\in \Z[c]$ defined by $G_n(c) = f_c^{\circ n}(0)$ have simple roots (see Lemma \ref{lem:gleason}).  This is related to the fact that for a postcritically finite rational map $f$ which is not a flexible Lattès map, $1\not \in \spec_f$ (see Proposition \ref{prop:noeigen1}).

\begin{proposition}
If $c$ is a center of period $m\geq 3$ and $f:=f_c$, then 
\[G'_m(c) = (1-D) \frac{\chi_f(1)}{\chi_f(0)}.\]
\end{proposition}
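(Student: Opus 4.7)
The plan is to reduce the identity to a direct comparison of two explicit formulas, one for $G'_m(c)$ coming from the defining recursion of the Gleason polynomials, and one for $\chi_f(1)/\chi_f(0)$ coming from the computation of $\chi_f^2$ already carried out above.

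First I would rewrite the right-hand side in terms of $\chi_f^2$ rather than $\chi_f$. Since $f$ is a polynomial of degree $D$ fixing $\infty$ with $\infty \in \pf$, we have $\chi_f^2(\lambda) = (\lambda - 1/D)\,\chi_f(\lambda)$. Evaluating at $\lambda = 0$ and $\lambda = 1$ and taking the ratio gives
\[
(1-D)\,\frac{\chi_f(1)}{\chi_f(0)} \;=\; \frac{\chi_f^2(1)}{\chi_f^2(0)}.
\]
So it suffices to prove $G'_m(c) = \chi_f^2(1)/\chi_f^2(0)$.

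Second I would compute $\chi_f^2(1)/\chi_f^2(0)$ using the explicit formula
\[
\chi_f^2(\lambda) = \lambda^{m-1} + \frac{1}{\Delta_1}\lambda^{m-2} + \cdots + \frac{1}{\Delta_{m-2}}\lambda + \frac{1}{\Delta_{m-1}}.
\]
Evaluating at $0$ yields $1/\Delta_{m-1}$, and at $1$ yields $\sum_{n=0}^{m-1} 1/\Delta_n$ (with $\Delta_0 = 1$). Multiplying numerator and denominator by $\Delta_{m-1}$ and using $\Delta_{m-1}/\Delta_n = \delta_{n+1}\delta_{n+2}\cdots\delta_{m-1}$ gives
\[
\frac{\chi_f^2(1)}{\chi_f^2(0)} \;=\; 1 + \sum_{n=1}^{m-1}\, \delta_n\delta_{n+1}\cdots \delta_{m-1}.
\]

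Third I would compute $G'_m(c)$ from the recursion $G_n = G_{n-1}^D + c$, which differentiates to
\[
G'_n(c) = D\, G_{n-1}(c)^{D-1}\, G'_{n-1}(c) + 1 = \delta_{n-1}\, G'_{n-1}(c) + 1,
\]
since $\delta_{n-1} = f'(\zeta_{n-1}) = D\,\zeta_{n-1}^{D-1}$ and $\zeta_{n-1} = G_{n-1}(c)$. Starting from $G'_0 = 0$, a short induction produces the telescoped expression
\[
G'_m(c) \;=\; 1 + \delta_{m-1} + \delta_{m-1}\delta_{m-2} + \cdots + \delta_{m-1}\delta_{m-2}\cdots\delta_1 \;=\; 1 + \sum_{n=1}^{m-1} \delta_n\delta_{n+1}\cdots \delta_{m-1}.
\]

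Finally I would observe that the two expressions match termwise, which gives $G'_m(c) = \chi_f^2(1)/\chi_f^2(0) = (1-D)\,\chi_f(1)/\chi_f(0)$, as required. There is no genuine obstacle here: the whole argument is bookkeeping. The one step where it is easy to slip is the reindexing in the computation of $\chi_f^2(1)/\chi_f^2(0)$, so I would be careful to keep track of the convention $\Delta_n = \delta_1\cdots\delta_n$ with $\Delta_0 = 1$ and to verify that the telescoping from the recursion for $G'_m$ produces products starting at $\delta_1$ (not $\delta_0$, which in any case would be undefined in this convention).
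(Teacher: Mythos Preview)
Your proposal is correct and follows essentially the same route as the paper: both compute $G_m'(c)$ by unrolling the recursion $G_n' = 1 + \delta_{n-1}G_{n-1}'$ to get $1 + \delta_{m-1} + \delta_{m-1}\delta_{m-2} + \cdots + \delta_{m-1}\cdots\delta_1$, identify this with $\chi_f^2(1)/\chi_f^2(0)$, and then use the factorization $\chi_f^2(\lambda) = (\lambda - 1/D)\chi_f(\lambda)$ to convert. The only cosmetic difference is that the paper reads off $\chi_f^2(1)/\chi_f^2(0)$ from the normalized expression $\chi_f^2(\lambda)/\chi_f^2(0) = \sum \Delta_{-n}\lambda^n$ introduced just before the proposition, whereas you recompute it by multiplying through by $\Delta_{m-1}$; the resulting sums are the same after the reindexing you flag.
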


\begin{proof}
We have $G_1 (c)= c$ and $G_{n+1} = G_n^D(c)+c$, so that 
\[G_m' = 1+D\ G_{m-1}^{D-1} G_{m-1}' = 1 + \delta_{m-1} G_{m-1}' .\]
Since $G_1' = 1$, we have 
\begin{align*}
G_m'(c)  &= 1 + \delta_{m-1}\cdot \bigl(1+\delta_{m-2}\cdot (1+\cdots(1+\delta_1))\bigr) \\
&= 1 + \delta_{m-1} + \delta_{m-1}\delta_{m-2} + \cdots + \delta_{m-1}\delta_{m-2}\cdots \delta_1\\
& = 1 + \Delta_{-1} + \Delta_{-2}+\cdots +\Delta_{-(m-1)} = \chi_f^2(1)/\chi_f^2(0).
\end{align*}
Now, 
\[\chi_f^2(0) = -\frac{1}{D} \chi_f(0)\quad \text{and} \quad \chi_f^2(1) = \left(1-\frac{1}{D}\right) \chi_f(1)\]
so that 
\[ \frac{\chi_f^2(1)}{\chi_f^2(0)} = (1-D)  \frac{\chi_f(1)}{\chi_f(0)}.\qedhere\]
\end{proof}

\subsection{Algebraic units}\label{sec:units}

\begin{theorem}\label{theo:units}
Let $f$ be a periodic unicritical polynomial of degree $D$. If  $\lambda$ is an eigenvalue of $f_*:\qf\to \qf$, then $D\lambda$ is an algebraic unit. 
\end{theorem}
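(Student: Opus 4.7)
The plan is to show that $D\lambda$ satisfies a monic polynomial with algebraic integer coefficients whose reciprocal polynomial is again monic with algebraic integer coefficients after scaling by a unit; this will make both $D\lambda$ and $1/(D\lambda)$ algebraic integers.

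First I would establish that the critical orbit points $\zeta_i = f_c^{\circ i}(0)$ for $1\le i\le m-1$ are algebraic units, in parallel with the resultant argument of Proposition \ref{prop:largeeig}. Since $m\ge 3$, the constant term of $H_m$ is $\pm1$ (see \S\ref{sec:gleason}), so $c=\zeta_1$ is itself a unit. For higher $i$, use $\zeta_i=G_i(c)$ together with the factorization $G_i=\prod_{d\mid i}H_d$. Since $d\mid i<m$ forces $d\ne m$, Lemma \ref{lem:resgleason} gives $\result(H_m,H_d)=\pm1$, whence $\result(H_m,G_i)=\pm1$. Because $H_m$ is monic, this resultant equals $\prod_{c'}G_i(c')$ over the roots of $H_m$, which factors as a product of norms $N_{\Q(c')/\Q}(G_i(c'))\in\Z$; a product of integers equal to $\pm1$ forces each factor to be $\pm1$, so $\zeta_i$ is a unit.

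Next I would rewrite the characteristic polynomial in terms of $\mu:=D\lambda$. Since $\Delta_n=D^n(\zeta_1\cdots\zeta_n)^{D-1}$, the proposition above gives
\[
\tilde P(\mu):=D^{m-1}\chi_f^2(\mu/D)=\mu^{m-1}+\sum_{n=1}^{m-1}\frac{\mu^{m-1-n}}{(\zeta_1\cdots\zeta_n)^{D-1}},
\]
which is monic with each coefficient a product of units from Step 1, hence an algebraic integer. The relation $\chi_f^2(\lambda)=(\lambda-1/D)\chi_f(\lambda)$ yields $\tilde P(\mu)=(\mu-1)\cdot D^{m-2}\chi_f(\mu/D)$, so every $D\lambda$ with $\chi_f(\lambda)=0$ is a root of $\tilde P$, and thus an algebraic integer (integrality is transitive over the ring of algebraic integers).

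To upgrade this to "unit" I would consider the reciprocal polynomial. Since $0\notin\spec_f$ for periodic polynomials (cited from \cite{bekp}), we have $D\lambda\ne 0$. The constant term of $\tilde P$ is $(\zeta_1\cdots\zeta_{m-1})^{-(D-1)}$, a unit, so
\[
(\zeta_1\cdots\zeta_{m-1})^{D-1}\cdot\mu^{m-1}\tilde P(1/\mu)=\mu^{m-1}+\sum_{k=0}^{m-2}(\zeta_{k+1}\cdots\zeta_{m-1})^{D-1}\mu^{k}
\]
is monic with algebraic integer coefficients (each a product of units $\zeta_i^{D-1}$), and has $1/(D\lambda)$ as a root. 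Therefore $1/(D\lambda)$ is an algebraic integer, and $D\lambda$ is an algebraic unit.

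The main obstacle is Step 1, where I have to turn Poonen's resultant identity into unit-ness of each individual $\zeta_i$ despite the fact that $H_m$ need not be irreducible; once that is in hand, the rest is a substitution in the explicit formula for $\chi_f^2$ combined with the standard reciprocal-polynomial trick.
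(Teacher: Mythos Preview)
Your argument is correct. Both your approach and the paper's rest on the explicit formula for $\chi_f^2$ together with Poonen's resultant identity (Lemma~\ref{lem:resgleason}), but they package the conclusion differently. The paper passes to the global polynomial
\[
S_m(\nu)=\result(H_m,R_\nu)=\prod_{H_m(c)=0}\frac{\chi^2_{f_c}(\nu/D)}{\chi^2_{f_c}(0)}\in\Z[\nu],
\]
and checks directly that its constant term is $1$ and its leading coefficient is $\result(H_m,\Gamma_{m-1})=\pm 1$, so every root is a unit. You instead stay at a single center $c$, first extracting from Poonen's lemma that each $\zeta_i=G_i(c)$ is itself an algebraic unit (via the factorization of $\result(H_m,G_i)$ into integer norms over the irreducible factors of $H_m$), and then observe that both $\tilde P(\mu)=D^{m-1}\chi_f^2(\mu/D)$ and its unit-scaled reciprocal are monic over the ring of algebraic integers. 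The paper's route is slightly slicker in that the resultant over all $c$ lands in $\Z[\nu]$ immediately, sidestepping your norm-factorization step; your route has the merit of isolating the underlying arithmetic fact---that the points of the critical cycle are units---which is of independent interest and not stated explicitly in the paper.

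Your handling of the reducibility of $H_m$ in Step~1 is fine: since $H_m$ is monic in $\Z[c]$, its irreducible factors $P_j$ are monic in $\Z[c]$, each $\result(P_j,G_i)=N_{\Q(c_j)/\Q}\bigl(G_i(c_j)\bigr)$ is a rational integer, and a product of rational integers equal to $\pm1$ forces each to be $\pm1$. (One small wording point: it is one norm per irreducible factor, not one per root.)
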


\begin{proof}
The polynomial $f$ is conjugate to some polynomial $f_c$ where $c$ is a center of period $m$; that is, $H_m(c)=0$. 
The eigenvalues of $f_*$ are those of $(f_c)_*$. 

For $n\in \ob0,m-1\cb$, let $\Gamma_n\in \Z[c]$ be the polynomial defined by 
\[\Gamma_n:= \prod_{k=1}^n G_{m-k}^{D-1},\]
where as usual, an empty product is equal to $1$. 
Then 
\[\Delta_{-n} = D^n\prod_{k=1}^n \zeta_n^{D-1} = D^n\prod_{k=1}^n \bigl(f_c^{\circ n}(0)\bigr)^{D-1} =  D^n \Gamma_n(c).\]
As a consequence, 
\[\frac{\chi^2_{f_c}(\lambda)}{\chi^2_{f_c}(0)} = \sum_{n=0}^{m-1} D^n \Gamma_n(c) \lambda^n = R(c,D \lambda),\]
where $R\in \Z[c,\nu]$ is defined by 
\[R(c,\nu):=\sum_{n=0}^{m-1} \Gamma_n(c) \nu^n.\]
We shall denote by $R_\nu$ the polynomial $R(c,\nu)$ considered as a polynomial in the variable $c$ with coefficients in $\Z[\nu]$. Let $S_m\in \Z[\nu]$ be defined by 
\begin{equation}\label{eq:sm}
S_m(\nu) := \result(H_m,R_\nu) = \prod_{H_m(c)=0}R(c,\nu) = \prod_{H_m(c)=0} \frac{\chi^2_{f_c}(\nu/D)}{\chi^2_{f_c}(0)}.
\end{equation}
If $\lambda\in \spec_{f_c}$ with $H_m(c)=0$, then $\nu:=D \lambda$ is a root of $S_m$. 

On the one hand, the constant coefficient of $S_m$ is $S_m(0)=1$. On the other hand, the leading monomial of $R_m(c,\nu)$ considered as a polynomial of $\nu$ is $\Gamma_{m-1}(c)\nu^{m-1}$, so that the leading coefficient of $S_m$ is:
\[\prod_{H_m(c)=0}\Gamma_{m-1}(c) = \result(H_m,\Gamma_{m-1}) = \result\left(H_m,\prod_{k=1}^{m-1} G_{m-k}^{D-1}\right).\]
By Lemma \ref{lem:resgleason}, this resultant is equal to $\pm 1$. It follows that the roots of $S_m$ are algebraic units. 
\end{proof}

We know that $\chi_{f_c}^2(\lambda) = (\lambda-1/D)\chi_{f_c}(\lambda)$, so that the factor $(\nu-1)$ appears $\deg(H_m)$ times in the product \eqref{eq:sm} defining $S_m$. In addition, if $c_1^{D-1}=c_2^{D-1}$, then the polynomials $f_{c_1}$ and $f_{c_2}$ are conjugate. So, each eigenvalue is counted $D-1$ times in the product. Thus, there is a (unique) polynomial $\Upsilon_m\in \Z[\nu]$ with constant coefficient $1$ such  that 
\[S_m (\nu) = (1-\nu)^{\deg(H_m)}\cdot \Upsilon_m^{D-1}(\nu).\] 
It would be interesting to prove that the roots of $\Upsilon_m$ are simple. For example, when this is the case,
$f_*:\qf\to \qf$ is diagonalizable. 

\subsection{The case $m=3$}

We will prove that the roots of $\Upsilon_3$ are simple. 

\begin{lemma}
For all $D\geq 2$, 
\[\Upsilon_3 (\nu)=(\nu+1)^{D+1} - \nu^D.\]
\end{lemma}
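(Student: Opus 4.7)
The plan is to apply the resultant formula \eqref{eq:sm} directly for $m=3$, reducing everything to a computation in the single variable $b:=c^{D-1}$. Since $G_1(c)=c$ and $G_2(c)=c(c^{D-1}+1)=c(b+1)$, one has $\Gamma_1=G_2^{D-1}=b(b+1)^{D-1}$ and $\Gamma_2=\Gamma_1 G_1^{D-1}=b^2(b+1)^{D-1}$, so $R(c,\nu)$ factors through $b$:
\[
R(c,\nu)=\tilde R(b,\nu):=1+b(b+1)^{D-1}\nu+b^2(b+1)^{D-1}\nu^2.
\]
Similarly, $G_3=cH_3$ yields $H_3(c)=c^{D-1}(c^{D-1}+1)^D+1=J_3(b)$ with $J_3(b):=b(b+1)^D+1$, a monic polynomial of degree $D+1$. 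Since each root of $J_3$ corresponds to exactly $D-1$ roots of $H_3$ (its $(D-1)$-th roots in $c$),
\[
S_3(\nu)=\Bigl(\prod_{J_3(b)=0}\tilde R(b,\nu)\Bigr)^{D-1}.
\]

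Next, I would exploit that $\tilde R(b,1)=1+b(b+1)^{D-1}(1+b)=J_3(b)$, so $(1-\nu)$ divides $\tilde R(b,\nu)$ modulo $J_3(b)$. Polynomial division in $\nu$ gives
\[
\tilde R(b,\nu)=(\nu-1)\cdot b(b+1)^{D-1}\bigl(b\nu+b+1\bigr)+J_3(b).
\]
I would then substitute $y:=b+1$, which converts $J_3$ to $P(y):=y^{D+1}-y^D+1$ and, via the defining relation $(y-1)y^D=-1$ on $\{P=0\}$, simplifies $b(b+1)^{D-1}=(y-1)y^{D-1}=-1/y$. Together with $b\nu+b+1=y(\nu+1)-\nu$, this gives, on the zero set of $P$,
\[
\tilde R(b,\nu)=\frac{(1-\nu)\bigl(y(\nu+1)-\nu\bigr)}{y}.
\]

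The remaining task is to evaluate the product over the roots $y_i$ of $P$. Since $P$ is monic of degree $D+1$ with $P(0)=1$, one has $\prod_i y_i=(-1)^{D+1}$. For the numerator, factoring out $(\nu+1)$ turns it into a reverse evaluation of $P$:
\[
\prod_i\bigl(y_i(\nu+1)-\nu\bigr)=(\nu+1)^{D+1}\cdot(-1)^{D+1}P\!\Bigl(\frac{\nu}{\nu+1}\Bigr)=(-1)^{D+1}\bigl[(\nu+1)^{D+1}-\nu^D\bigr],
\]
where the last equality follows from a direct expansion of $P(\nu/(\nu+1))$. Assembling these pieces,
\[
\prod_{J_3(b)=0}\tilde R(b,\nu)=(1-\nu)^{D+1}\bigl[(\nu+1)^{D+1}-\nu^D\bigr],
\]
and raising to the $(D-1)$-th power and comparing with $S_3=(1-\nu)^{\deg H_3}\Upsilon_3^{D-1}$, where $\deg H_3=(D-1)(D+1)$, forces $\Upsilon_3(\nu)=(\nu+1)^{D+1}-\nu^D$, the normalization being pinned down by $\Upsilon_3(0)=1$.

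The main technical obstacle is not any single computation but finding the right reduction: the coefficients $\Gamma_n$ and the polynomial $H_3$ all descend to functions of $b=c^{D-1}$, and the further substitution $y=b+1$ converts the product $\prod_{P(y_i)=0}(\cdots)$ into an evaluation of $P$ itself at a rational argument, which is what closes the computation in elementary form rather than leaving an opaque resultant.
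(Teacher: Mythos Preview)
Your proof is correct and follows essentially the same route as the paper: both reduce to the variable $b=c^{D-1}$, factor out $(1-\nu)$ from $R$ modulo $J_3$ (the paper's $A$), and then collapse the remaining resultant to an evaluation of $J_3$ at a linear rational argument. Your substitution $y=b+1$ and direct product over roots is a cosmetic variant of the paper's Bezout identity $\nu bA(b)+(b+1)B_\nu(b)=(\nu+1)b+1$ together with resultant multiplicativity --- indeed your linear factor $y(\nu+1)-\nu$ is exactly the paper's $(\nu+1)b+1$ after the shift.
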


\begin{proof}
We have that $G_3(c) = (c^D+c)^D+c$, so that 
\[H_3(c) = A(c^{D-1})\quad \text{with}\quad A(b) = b(b+1)^D+1.\]
In addition, an elementary computation shows that 
\[R(c,\nu) = (1-\nu)B_\nu(c^{D-1})\quad\text{with}\quad B_\nu (b) = 1-\nu b^2(b+1)^{D-1}.\]
Then, 
\[\Upsilon_3 (\nu)= \result(A,B_\nu)\]
Observe that 
\[\nu bA(b) + (b+1)B_\nu(b) = (\nu+1)b+1.\]
Note that $A$ is monic of degree $D+1$. 
We therefore have
\begin{align*}
(\nu+1)^{D+1}\cdot \result\left(A(b), b+\frac{1}{\nu+1}\right)
&=\result\bigl(A(b),(\nu+1) b+1 \bigr) \\
&= \result\bigl(A(b),(b+1)B_\nu(b)\bigr)\\
&=\result\bigl(A(b),b+1\bigr)\cdot \Upsilon_3(\nu)
\end{align*}
As a consequence, 
\[\Upsilon_3 (\nu)= (\nu+1)^{D+1}\cdot  \frac{A\bigl(-1/(\nu+1)\bigr)}{A(-1)} = (\nu+1)^{D+1} - \nu^D.\qedhere\]
\end{proof}

\begin{lemma}
For all $D\geq 2$, the roots of $\Upsilon_3$ are simple. 
\end{lemma}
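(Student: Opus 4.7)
The plan is to show directly that $\Upsilon_3$ and its derivative have no common root. Write
\[
\Upsilon_3(\nu) = (\nu+1)^{D+1} - \nu^D \and \Upsilon_3'(\nu) = (D+1)(\nu+1)^D - D\nu^{D-1},
\]
and suppose for contradiction that $\nu_0\in\C$ is a common root, so that
\begin{align*}
(\nu_0+1)^{D+1} &= \nu_0^D, \\
(D+1)(\nu_0+1)^D &= D\nu_0^{D-1}.
\end{align*}
Neither $\nu_0=0$ nor $\nu_0=-1$ is possible, since plugging into the first equation gives $1=0$ and $0=(-1)^D$ respectively.

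Now I would eliminate the $(\nu_0+1)$ powers. Multiplying the second equation by $(\nu_0+1)$ and using the first, one obtains
\[
(D+1)\nu_0^D = D\nu_0^{D-1}(\nu_0+1).
\]
Dividing through by $\nu_0^{D-1}$ (permissible since $\nu_0\neq 0$) yields $(D+1)\nu_0 = D(\nu_0+1)$, hence $\nu_0 = D$.

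It remains to verify that $\nu_0=D$ is not actually a root of $\Upsilon_3$. Substituting into the first equation would require $(D+1)^{D+1} = D^D$, which is false since for $D\geq 2$,
\[
(D+1)^{D+1} > (D+1)^D > D^D.
\]
This contradiction shows that $\Upsilon_3$ and $\Upsilon_3'$ share no root, so $\Upsilon_3$ has simple roots. The argument is entirely elementary; the only minor subtlety is the elimination step, which is straightforward once one notices the linear relation $(D+1)\nu_0 = D(\nu_0+1)$ hidden between the two polynomial identities.
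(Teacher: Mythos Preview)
Your proof is correct and follows essentially the same route as the paper's: the paper packages the elimination as the single polynomial identity $\nu\,\Upsilon_3'(\nu)-D\,\Upsilon_3(\nu)=(1+\nu)^D(\nu-D)$, from which any common root must be $-1$ or $D$, and then checks neither is a root of $\Upsilon_3$. Your step of multiplying the derivative relation by $(\nu_0+1)$ and substituting is exactly this identity evaluated at a putative common root.
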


\begin{proof}
Note that $\nu\Upsilon'_3 (\nu)-D\Upsilon_3(\nu) = (1+\nu)^D(\nu-D)$. So, if $\Upsilon_3$ and $\Upsilon'_3$ had a common root, this would be either $-1$ or $D$. None of those are roots of $\Upsilon_3$. 
\end{proof}

\subsection{Spectral gap}\label{sec:gap}

For $m\geq 3$, let $\spec(D,m)$ be the union of the sets of eigenvalues $\spec_f$ for all unicritical polynomials $f$ of degree $D$ which are periodic of period $m$. Set 
\[\spec(D):=\bigcup_{m\geq 4} \spec(D,m).\]
The following theorem is illustrated by Figure \ref{fig:bound}.

The following proposition completes the proof of Theorem \ref{theo:3}. 

\begin{proposition}[Spectral gap]\label{theo:gap}
If $\lambda\in \spec(D)$, then 
\[
\frac{1}{4D}<|\lambda|<1.
\]
\end{proposition}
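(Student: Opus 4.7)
The plan is to handle the two bounds separately: the upper bound $|\lambda|<1$ follows immediately from results already in the paper, while the lower bound will come from a direct coefficient estimate on the polynomial $R(c,\nu)$ introduced in Section \ref{sec:units}.

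For the upper bound, I would observe that since $f=f_c$ is a polynomial of degree $D$, every element of $\qf$ has at worst a simple pole at $\infty$ (the pole at $\infty$ comes from $\infty\in\pf$, which is a superattracting fixed point). Hence $\qf\subseteq \cQ_1(\C)$, and $(f_c)_*:\qf\to\qf$ is the restriction of $(f_c)_*:\cQ_1(\C)\to\cQ_1(\C)$. Corollary \ref{coro:lambdainDforpoly} then yields $|\lambda|<1$ directly.

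For the lower bound, the main content of the proposition, suppose $\lambda\in \spec_{f_c}$ with $c$ a center of period $m\geq 4$. Then $\chi_{f_c}(\lambda)=0$, and hence $\chi_{f_c}^2(\lambda)=0$. Since $0$ has exact period $m$ for $f_c$, we have $\chi_{f_c}^2(0)\neq 0$, and dividing by $\chi_{f_c}^2(0)$ and setting $\nu:=D\lambda$ yields $R(c,\nu)=0$, where
$$R(c,\nu)=1+\sum_{n=1}^{m-1}\Gamma_n(c)\nu^n,\qquad \Gamma_n(c)=\prod_{k=1}^n\bigl(f_c^{\circ(m-k)}(0)\bigr)^{D-1}.$$
Because the critical orbit of $f_c$ is periodic, each iterate $f_c^{\circ j}(0)$ lies in the filled-in Julia set $K_{f_c}$, and Lemma \ref{lem:boundkf} therefore gives $|f_c^{\circ j}(0)|^{D-1}\leq 2$. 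Consequently $|\Gamma_n(c)|\leq 2^n$ for every $n$.

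The estimate now closes in one line. If we had $|\nu|\leq 1/4$, the relation $R(c,\nu)=0$ would force
$$1=\left|\sum_{n=1}^{m-1}\Gamma_n(c)\nu^n\right|\leq \sum_{n=1}^{m-1}2^n\cdot 4^{-n}=1-2^{-(m-1)}<1,$$
a contradiction. Therefore $|\nu|>1/4$, that is, $|\lambda|>1/(4D)$. The only conceptually nontrivial observation is that the Julia-set bound $|z|^{D-1}\leq 2$ from Lemma \ref{lem:boundkf} feeds exactly into a geometric-series estimate whose critical cutoff is $|\nu|=1/4$; once this is noted, no further machinery (Galois theory, resultants, etc.) is needed for the gap itself.
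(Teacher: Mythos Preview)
Your proof is correct and follows essentially the same approach as the paper: the upper bound via Corollary~\ref{coro:lambdainDforpoly}, and the lower bound via the coefficient estimate $|\Gamma_n(c)|\leq 2^n$ (equivalently $|\Delta_{-n}|\leq (2D)^n$) from Lemma~\ref{lem:boundkf} feeding into a geometric-series bound. The only cosmetic difference is that you work in the variable $\nu=D\lambda$ with $R(c,\nu)$, while the paper stays in $\lambda$ and writes the same polynomial as $\chi_f^2(\lambda)/\chi_f^2(0)=1+\Delta_{-1}\lambda+\cdots+\Delta_{-(m-1)}\lambda^{m-1}$; the two computations are identical after the substitution.
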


\begin{proof}
Assume $c$ is a center of period $m$ and $f=f_c$. Assume $\lambda\in \spec_f$. 
According to Corollary \ref{coro:lambdainDforpoly}, we have that $|\lambda|<1$. We must show that $|\lambda|>1/(4D)$. 

Let us recall that 
\[0 = \frac{\chi^2_f(\lambda)}{\chi^2_f(0)}  = 1+\Delta_{-1}\lambda+\Delta_{-2}\lambda^2+\cdots+\Delta_{-(m-1)}\lambda^{m-1}.\]
with
\[\Delta_0:=1,\quad \Delta_{-1} := \delta_{m-1}, \quad \Delta_{-2} := \delta_{m-1}\delta_{m-2},\quad \ldots,\quad \Delta_{-(m-1)} := \delta_{m-1}\delta_{m-2}\cdots \delta_{1}.\]

\begin{lemma}
For all $n\in \Z/m\Z$, we have that $|\delta_n|\leq 2D$.
\end{lemma}

\begin{proof}
Set $\zeta_n:=f_c^{\circ n}(0)$. Note that $c\in \M_D$ and $\zeta_n\in K_{f_c}$. 
According to Lemma \ref{lem:boundkf}, we have $|\zeta_n|^{D-1}\leq 2$ for all  $n\in \Z/m\Z$,  so
\[|\delta_n| = D|\zeta_n|^{D-1} \leq 2D.\qedhere\]
\end{proof}

If $|z|\leq \frac{1}{4D}$, then for all $k\in \ob1,m-1\cb$,
\[| \Delta_{-k}z^k|<\frac{(2D)^k}{(4D)^k} = \frac{1}{2^k},\] 
so 
\[\left|\frac{\chi^2_f(z)}{\chi^2_f(0)}\right|\geq 1-   \sum_{k=1}^{m-1} \frac{1}{2^k}>0.\]
Since $\chi^2_f(\lambda)=0$, we necessarily have $|\lambda|>\frac{1}{4D}$. 
\end{proof}

\subsection{Equidistribution}\label{sec:quidistribution}

This section is devoted to the proof of Theorem \ref{theo:4}. 


In order to prove this result, we will show that when $r\in [r_D,1]$, there exists a sequence of centers $c_n$ such that the roots of $\chi_{f_{c_n}}$ equidistribute on the circle $|z|=r$ as $n\to \infty$.  This means the following. 

Given a polynomial $P\in \C[z]$, we denote by $\m_P$ the probability measure
\[\m_P := \frac{1}{\deg(P)} \sum_{x\in \C} \ord_x(P)\]
where $\ord_x(P)$ is the order of vanishing of $P$ at $x$. 

Assume $(P_n\in \C[z])_{n\geq 0}$ is a sequence of polynomials. We say that as $n\to \infty$, the roots of $P_n$ equidistribute according to a probability measure $\m$ if the sequence of probability measures $(\m_{P_n})_{n\geq 0}$ converges weakly to $\m$. 


We say that as $n\to \infty$, the roots of $P_n$ equidistribute on a Euclidean circle if the roots of $P_n$ equidistribute according to the normalized $1$-dimensional Lebesgue measure on this circle. 

\begin{example}
As $n\to \infty$, the roots of the polynomials $z^n-1$ equidistribute on the unit circle $S^1:=\{|z|=1\}$. 
\end{example}

\begin{proposition}\label{approx_centers}
Suppose that the critical point of $f_{c_0}(z) = z^D+c_0$ is preperiodic to a repelling fixed point $\beta_0$ of multiplier $\mu$. Then, there exists a sequence of centers $c_n\in \M_D$ converging to $c_0$ such that as $n\to \infty$, the roots of $ \chi_{f_{c_n}}$ equidistribute on the circle $\bigl\{|\lambda| =1/|\mu|\bigr\}$.
\end{proposition}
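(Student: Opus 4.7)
My plan combines a Misiurewicz-type approximation of $c_0$ by centers with an asymptotic analysis of the coefficients of $\chi_{f_{c_n}}^2$ and a standard potential-theoretic equidistribution argument. Let $k \geq 1$ be the smallest integer with $f_{c_0}^{\circ k}(0) = \beta_0$, and for $c$ near $c_0$ let $\beta(c)$ be the holomorphic continuation of $\beta_0$ as a repelling fixed point of $f_c$, of multiplier close to $\mu$. Let $g_c$ denote the inverse branch of $f_c$ fixing $\beta(c)$; it is defined on a neighborhood of $\beta(c)$ and is contracting with multiplier $1/\mu$. I construct the $c_n$ by solving, for large $\ell$, the analytic equation $f_c^{\circ k}(0) = g_c^{\circ \ell}(0)$: the right side tends to $\beta(c)$ at rate $O(|\mu|^{-\ell})$ as $\ell\to\infty$, while the left side has a (transverse) zero at $c = c_0$. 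The implicit function theorem then produces a sequence of centers $c_n$ of period $m_n = k + \ell_n$ with $\ell_n \to \infty$ and $|c_n - c_0| = O(|\mu|^{-\ell_n})$.

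Next, I analyze the orbit $\zeta_j^{(n)} := f_{c_n}^{\circ j}(0)$. By construction, $\zeta_k^{(n)} = \beta(c_n) + \varepsilon_n$ with $|\varepsilon_n| = O(|\mu|^{-\ell_n})$, and the linearization at $\beta(c_n)$ yields $\zeta_{k+j}^{(n)} - \beta(c_n) = \mu^j \varepsilon_n + O(|\mu|^{2(j - \ell_n)})$ for $0 \leq j < \ell_n$. Thus for any $\eta > 0$, there exists $C(\eta)$ with $|\zeta_j^{(n)} - \beta_0| < \eta$ for all $j \in [k, m_n - C(\eta)]$, which forces $|\delta_j^{(n)} - \mu| < C'\eta$ for those $j$, where $\delta_j^{(n)} := D(\zeta_j^{(n)})^{D-1}$ and $\mu = D\beta_0^{D-1}$. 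The remaining $\delta_j^{(n)}$ (for $j < k$ and $j > m_n - C(\eta)$) are bounded by $2D$ thanks to Lemma \ref{lem:boundkf}, and moreover converge, as $n\to\infty$, to explicit limits along the $c_0$-orbit and along the backward orbit $g_{c_0}^{\circ r}(0)$, respectively. Combining these, the products $\Delta_{-r}^{(n)} = \delta_{m_n-1}^{(n)} \cdots \delta_{m_n-r}^{(n)}$ satisfy
\[
K_\eta^{-1}(|\mu| - \eta)^r \leq |\Delta_{-r}^{(n)}| \leq K_\eta (|\mu| + \eta)^r
\]
uniformly for $1 \leq r \leq m_n-1$ and all sufficiently large $n$, and in particular $|\Delta_{-(m_n-1)}^{(n)}|^{1/(m_n-1)} \to |\mu|$.

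I then conclude via potential theory. Set $P_n(\lambda) := \chi_{f_{c_n}}^2(\lambda)/\chi_{f_{c_n}}^2(0) = 1 + \sum_{r=1}^{m_n - 1} \Delta_{-r}^{(n)} \lambda^r$ and $u_n(\lambda) := \frac{1}{m_n - 1}\log|P_n(\lambda)|$. The bounds on $\Delta_{-r}^{(n)}$ imply that the $u_n$ are uniformly locally bounded above, that $u_n(\lambda) \to \log|\mu\lambda|$ for $|\lambda| > 1/|\mu|$ (the leading term dominates), and that $u_n(\lambda) \to 0$ almost everywhere on $|\lambda| < 1/|\mu|$ (since $P_n$ converges there locally uniformly to an analytic function with value $1$ at the origin). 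Standard compactness for subharmonic functions then gives $u_n \to u$ in $L^1_{\mathrm{loc}}(\C)$ where $u(\lambda) := \max(0, \log|\mu\lambda|)$ is the equilibrium potential of the circle $\{|\lambda| = 1/|\mu|\}$. Taking distributional Laplacians, the root-counting measures of $P_n$ converge weakly to the uniform measure on that circle; and since $\chi_{f_{c_n}}$ differs from $\chi_{f_{c_n}}^2$ only by the factor $(\lambda - 1/D)$, the same equidistribution holds for the roots of $\chi_{f_{c_n}}$.

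The main technical obstacle is the orbit analysis of the second step: one needs uniform control on the escape from $\beta_0$ together with the behavior of the last $C(\eta)$ multipliers in order to obtain the two-sided bounds on $|\Delta_{-r}^{(n)}|$ that hold simultaneously for all $r$ and all large $n$. Once this uniform estimate is in place, the passage to the limit by potential theory is routine.
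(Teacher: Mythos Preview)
Your overall strategy---approximate $c_0$ by centers $c_n$ by solving $f_c^{\circ k}(0)=g_c^{\circ \ell}(0)$, analyse the orbit to estimate the coefficients $\Delta_{-r}^{(n)}$ of $\chi^2_{f_{c_n}}/\chi^2_{f_{c_n}}(0)$, then deduce equidistribution---is exactly the paper's. Your final step is a genuine variant: you invoke potential theory (convergence of $\frac{1}{m_n-1}\log|P_n|$ to the equilibrium potential of the circle), whereas the paper rescales by $\mu$, shows that both $P_n$ and the reciprocal polynomial $Q_n$ form normal families on $\Disk$ and on $\Chat\setminus\overline\Disk$ respectively, and then proves a self-contained lemma computing the Fourier coefficients of any limit measure by residues. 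Either route works once the coefficient control is in place.

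The gap is precisely in that coefficient control. Your stated bound
\[
K_\eta^{-1}(|\mu|-\eta)^r \le |\Delta_{-r}^{(n)}| \le K_\eta(|\mu|+\eta)^r
\]
is too weak to justify ``the leading term dominates for $|\lambda|>1/|\mu|$'': dividing subleading by leading terms one is left with a factor $\bigl((|\mu|+\eta)/(|\mu|-\eta)\bigr)^{m_n-1}$, which explodes. What is actually needed---for your potential-theoretic argument just as for the paper's normal-family argument---is the \emph{sharp} two-sided estimate
\[
\frac{1}{K}\le\Bigl|\frac{\Delta_{\pm j}^{(n)}}{\mu^j}\Bigr|\le K
\]
uniformly in $j$ and $n$ (and for \emph{both} $\Delta_{-j}$ and $\Delta_j$, since the outer region is governed by $Q_n=\sum_s \Delta_s^{-1}\lambda^{-s}$). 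The paper obtains this cleanly via the linearizing coordinate $\phi$ at $\beta_0$: for any orbit segment in the linearization disk,
\[
\delta_{j_1}(c_n)\cdots\delta_{j_2-1}(c_n)=\mu^{\,j_2-j_1}\,\frac{\phi'\bigl(\zeta_{j_1}(c_0)\bigr)}{\phi'\bigl(\zeta_{j_2}(c_0)\bigr)}\cdot\bigl(1+o(1)\bigr),
\]
so the ratio to $\mu^{\,j_2-j_1}$ is bounded above and below independently of the length; the $O(1)$ factors outside the disk are handled by a Schwarz-lemma estimate forcing $|c_n-c_0|=O(\kappa^{n/d})$. Your orbit analysis is headed in the right direction, and in fact already contains what is needed: since $|\zeta_{k+j}^{(n)}-\beta_0|$ decays geometrically in $\ell_n-j$, the sum $\sum_j|\delta_j^{(n)}/\mu-1|$ is bounded, which upgrades your $(|\mu|\pm\eta)^r$ to the sharp $K$-bound. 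You should state and use this stronger conclusion; once you do, either your argument or the paper's goes through.
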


\begin{remark}
A similar argument establishes that when the critical point of $f_{c}:z\mapsto z^D+c$ is preperiodic to a repelling cycle of multiplier $\mu$ and period $m$, then there exists a sequence of centers $c_n\in \M_D$ converging to $c$ such that  as $n\to \infty$, the roots of $ \chi_{f_{c_n}}$ equidistribute on the circle $\bigl\{|\lambda| = 1/\sqrt[m]{|\mu|}\bigr\}$. We will not use this fact. 
\end{remark}

\begin{proof}
Fix $r_1<r_0<|\beta_0|$ and $\epsilon>0$ so that $f_c$ has an inverse branch 
\[
g_c:D(\beta_0,r_0)\to D(\beta_0,r_1)
\]
for every $c\in D(c_0,\epsilon)$. 
For $c\in D(c_0,\epsilon)$, let $\beta(c)$ be the unique (repelling) fixed point of $f_c$ in $D(\beta_0,r_0)$. The map $\beta$ is holomorphic on $D(c_0,\epsilon)$ and $\beta(c_0)=\beta_0$. 

Choose a point $z_{-n_0}\in D(\beta_0,r_0)$ in the backward orbit of $0$ of $f_{c_0}$; that is, $f_{c_0}^{\circ n_0}(z_{-n_0})=0$ with $n_0>0$.  Since $0$ is not periodic (it is preperiodic to $\beta_0$), $(f^{\circ n_0}_{c_0})'(z_{-n_0})\neq 0$. Thus, taking $\epsilon>0$ closer to $0$ if necessary, we may assume that there is a holomorphic function $\zeta_{-n_0}:D(c_0,\epsilon)\to D(\beta_0,r_0)$ defined implicitly by $f_c^{\circ n_0}(\zeta_{-n_0}(c))=0$. 
Set
\[\zeta_j(c):=f_c^{\circ(n_0+j)}(\zeta_{-n_0}(c))\text{ for } j\geq -n_0\]
and 
\[\zeta_j(c):=g_c^{\circ(-n_0-j)}(\zeta_{-n_0}(c))\text{ for }j\leq -n_0.\]
We have specified a distinguished orbit for $f_c$
\[
\ldots\mapsto \zeta_{-2}(c)\mapsto \zeta_{-1}(c)\mapsto \zeta_0(c)=0\mapsto \zeta_1(c)=c\mapsto \zeta_2(c)\mapsto\ldots
\]
which converges to $\beta(c)$ in backward time.

Let $k_0$ be the preperiod of $0$ to $\beta_0$. As $n\to\infty$, the sequence $(\zeta_{k_0}-\zeta_{-n})$ converges uniformly on $D(c_0,\epsilon)$, to the function $\zeta_{k_0}-\beta$. This function vanishes at $c_0$, but it is not identically $0$. 
In particular, we can find a sequence $(c_n)$ such that $c_n\to c_0$ as $n\to \infty$, and $\zeta_{k_0}(c_n)=\zeta_{-n}(c_n)$. Then, $c_n$ is a center of period $m_n:=n+k_0$. 

\begin{lemma}\label{lemmabelow}
There exists a constant $K$ such that for all sufficiently large $n$ and all $1\leq j \leq m_n-1$
\[
\frac{1}{K}<\left|\frac{\Delta_j(c_n)}{\mu^j}\right|\leq K
\and 
\frac{1}{K}<\left|\frac{\Delta_{-j}(c_n)}{\mu^j}\right|\leq K
\]
\end{lemma}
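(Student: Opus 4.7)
The plan is to prove the estimate for $|\Delta_{-j}(c_n)/\mu^j|$ first, and deduce the estimate for $|\Delta_j(c_n)/\mu^j|$ from the multiplicative symmetry of the orbit, namely
\[
\Delta_j(c_n)\cdot \Delta_{-(m_n-1-j)}(c_n) = \Delta_{m_n-1}(c_n) = \Delta_{-(m_n-1)}(c_n).
\]
Rewriting this as
\[
\frac{\Delta_j(c_n)}{\mu^j}=\frac{\Delta_{-(m_n-1)}(c_n)/\mu^{m_n-1}}{\Delta_{-(m_n-1-j)}(c_n)/\mu^{m_n-1-j}},
\]
uniform two-sided bounds on the one-sided quotient yield uniform two-sided bounds on the two-sided quotient. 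So the problem reduces to controlling $\Delta_{-j}(c_n)=\prod_{k=1}^j f'_{c_n}(\zeta_{-k}(c_n))$.

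The index range $1\leq k\leq j$ splits into three regimes: a fixed initial block $1\leq k\leq n_0$, a linearization regime $n_0<k\leq n$, and a possible tail $n<k\leq m_n-1$ of at most $k_0-1$ indices. On the initial block, $\zeta_{-k}(c_n)\to \zeta_{-k}(c_0)\neq 0$, so the corresponding factors contribute a uniformly bounded quantity; on the tail, the periodicity identification $\zeta_{-k}(c_n)=\zeta_{k_0-(k-n)}(c_n)$ shows we pick up a uniformly bounded number of factors along the preperiodic portion of the orbit, converging to nonzero limits. On the linearization regime, $\zeta_{-k}(c_n)=g_{c_n}^{\circ(k-n_0)}(\zeta_{-n_0}(c_n))$ and the identity $f'\circ g=1/g'$ telescopes the product to $1/\bigl(g_{c_n}^{\circ(j-n_0)}\bigr)'(\zeta_{-n_0}(c_n))$. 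A holomorphic linearizing coordinate $\phi_c$ for $g_c$ near $\beta(c)$, depending holomorphically on $c$ near $c_0$, then gives the exact identity
\[
\bigl(g_{c_n}^{\circ\ell}\bigr)'(\zeta_{-n_0}(c_n))=\frac{\phi'_{c_n}(\zeta_{-n_0}(c_n))}{\mu(c_n)^{\ell}\,\phi'_{c_n}\bigl(g_{c_n}^{\circ\ell}(\zeta_{-n_0}(c_n))\bigr)},
\]
in which both $\phi'_{c_n}$ values are bounded above and away from $0$ uniformly in $n$ and $\ell$. Concatenating the three regimes gives $|\Delta_{-j}(c_n)|\asymp |\mu(c_n)|^{j}$ uniformly for $j\in\{1,\ldots,m_n-1\}$ and $n$ large.

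The main obstacle is passing from $\mu(c_n)^j$ to $\mu^j$: for $j$ of order $n$, a tiny discrepancy $|\mu(c_n)/\mu-1|$ could accumulate into an unbounded error $(\mu(c_n)/\mu)^j$. The rescue is that $c_n\to c_0$ at geometric rate $|\mu|^{-n}$. Indeed, $c_0$ is a Misiurewicz parameter, so by the classical transversality theorem for Misiurewicz points of unicritical polynomials, the limit function $h_\infty(c):=\zeta_{k_0}(c)-\beta(c)$ has a simple zero at $c_0$. The centers $c_n$ are zeros of $h_n(c):=\zeta_{k_0}(c)-\zeta_{-n}(c)=h_\infty(c)+O(|\mu|^{-n})$ on a disk around $c_0$, so Rouch\'e (or the implicit function theorem) gives $|c_n-c_0|=O(|\mu|^{-n})$. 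Consequently $|\mu(c_n)/\mu-1|=O(|\mu|^{-n})$ and
\[
\bigl|(\mu(c_n)/\mu)^{j}-1\bigr|=O\bigl(j|\mu|^{-n}\bigr)=O\bigl(n|\mu|^{-n}\bigr)\longrightarrow 0
\]
uniformly for $j\leq m_n-1=k_0+n-1$. This upgrades $|\Delta_{-j}(c_n)|\asymp |\mu(c_n)|^j$ to $|\Delta_{-j}(c_n)/\mu^j|\asymp 1$, and the first paragraph then produces the analogous estimate for $|\Delta_j(c_n)/\mu^j|$.
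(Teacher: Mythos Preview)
Your argument is correct and arrives at the same conclusion, but the route differs from the paper's in two notable ways.

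First, the symmetry reduction $\Delta_j/\mu^j=(\Delta_{-(m_n-1)}/\mu^{m_n-1})\big/(\Delta_{-(m_n-1-j)}/\mu^{m_n-1-j})$ is a clean simplification; the paper instead treats both $\Delta_j$ and $\Delta_{-j}$ directly via the periodicity $\delta_j(c_n)=\delta_{m_n+j}(c_n)$.

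Second, and more substantively, your core estimate linearizes $g_{c_n}$ with a \emph{parameter-dependent} Koenigs coordinate $\phi_{c_n}$, producing factors $\mu(c_n)^j$, and then spends a separate step converting $\mu(c_n)^j$ to $\mu^j$ using the geometric rate $c_n\to c_0$. The paper reverses the order: it first compares each factor $\delta_j(c_n)$ to $\delta_j(c_0)$ via the Schwarz Lemma (the bound $|\log(\delta_j(c_n)/\delta_j(c_0))|\leq K_4\kappa^{n/d}$, summed over at most $n$ indices), and only then invokes the linearizer $\phi$ at the \emph{fixed} parameter $c_0$ to write $\delta_{j_1}(c_0)\cdots\delta_{j_2-1}(c_0)=\mu^{j_2-j_1}\,\phi'(\zeta_{j_1}(c_0))/\phi'(\zeta_{j_2}(c_0))$. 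This bypasses any comparison of $\mu(c_n)$ with $\mu$.

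One remark on your use of Misiurewicz transversality: the paper explicitly notes that the order $d$ of $\zeta_{k_0}-\beta$ at $c_0$ equals $1$ but \emph{chooses not to use it}, working throughout with the weaker estimate $|c_n-c_0|<K_2\kappa^{n/d}$. Your own argument does not actually require $d=1$ either: any geometric rate $|c_n-c_0|=O(\rho^n)$ with $\rho<1$ already gives $j\,|\mu(c_n)/\mu-1|=O(n\rho^n)\to 0$. So the appeal to transversality, while correct, is an unnecessary external input that the paper deliberately avoids.
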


\begin{proof}
For $j\in\Z$, let $\delta_j(c)$ be the derivative of $f_c$ at $\zeta_j(c)$.  Taking $\epsilon$ closer to $0$ if necessary, we may assume that there is a constant $K_0>0$ so that 
\begin{equation}\label{eq:k0}
\frac{1}{K_0}<\left|\frac{\delta_j(c)}{\mu}\right|<K_0
\end{equation}
for $c\in D(c_0,\epsilon)$ and $j\in \{-n_0,\ldots,-1\}\cup\{1,\ldots,k_0\}$. 

Let $d$ be the order of $\zeta_{k_0}-\beta$ at $c_0$ (it is true that $d=1$ \cite{orsay}, but we will not require this fact). Now there is a constant $K_1$ so that 
\begin{equation}\label{eq:k1}
K_1|c-c_0|^d<|\zeta_{k_0}(c)-\beta(c)|.
\end{equation}
By the Schwarz Lemma, there exists $\kappa<1$ such that for all $z\in D(\beta_0,r_0)$ and all $c\in D(c_0,\epsilon)$,
\[|g_c(z)-\beta(c)|<\kappa|z-\beta(c)|.\] 
Then, since $\zeta_{-n_0}(c)\in D(\beta_0,r_0)$ and $\zeta_{-n}(c) = g_c^{\circ (n-n_0)}\bigl( \zeta_{-n_0}(c)\bigr)$, we have
\begin{equation}\label{eq:kappa}
|\zeta_{-n}(c)-\beta(c)|<r_0\kappa^{n-n_0}
\end{equation}
Set 
\begin{equation}\label{eq:k2}
r_n:=\left(\frac{r_0\kappa^n}{K_1\kappa^{n_0}}\right)^{1/d}=K_2\kappa^{n/d}\quad \text{with}\quad K_2:=\left(\frac{r_0}{K_1\kappa^{n_0}}\right)^{1/d}.
\end{equation}
 For $c\in D(c_0,\epsilon)\sm D(c_0,r_n)$, so that $r_n\leq |c-c_0|$, we have
 \[|\zeta_{-n}(c)-\beta(c)|\underset{\eqref{eq:kappa}}<r_0\frac{\kappa^n}{\kappa^{n_0}}\underset{\eqref{eq:k2}}=K_1r_n^d\leq K_1|c-c_0|^d\underset{\eqref{eq:k1}}<|\zeta_{k_0}(c)-\beta(c)|,\] 
 so that the leftmost quantity cannot be equal to the rightmost quantity. As a consequence, 
 for $n$ large enough, 
 \[|c_n-c_0|<r_n.\]

For $j\leq -n_0$ and $c\in D(c_0,\epsilon)$, the point $\zeta_j(c)$ belongs to $D(\beta_0,r_0)$. Since
$\delta_j(c) = D\zeta_j^{D-1}(c)$, the branch of 
\[
c\mapsto \log\frac{\delta_j(c)}{\delta_j(c_0)}
\]
which vanishes at $c_0$ is bounded by some constant $K_3$. 
According to the Schwarz Lemma, 
\[
\left|\log\frac{\delta_j(c_n)}{\delta_j(c_0)}\right|<\frac{K_3r_n}{\epsilon}\underset{\eqref{eq:k2}}=K_4\kappa^{n/d}\quad\text{with}\quad K_4 = \frac{K_2K_3}{\epsilon}.
\] 
Then, for $-n\leq j_1< j_2\leq -n_0$ we have 
\[
\left|\log\frac{\delta_{j_1}(c_n)\cdots\delta_{j_2 - 1}(c_n)}{\delta_{j_1}(c_0)\cdots\delta_{j_2 - 1}(c_0)}\right|<nK_4\kappa^{n/d}\underset{n\to\infty}\longrightarrow 0.
\]
In addition, we have 
\[
\delta_{j_1}(c_0)\cdots\delta_{j_2 - 1}(c_0)=\mu^{j_2 - j_1}\frac{\phi'\bigl(\zeta_{j_1}(c_0)\bigr)}{\phi'\bigl(\zeta_{j_2}(c_0)\bigr)}
\]
where $\phi:D(\beta_0,r_0)\to\C$ is the linearizing map conjugating $f_{c_0}$ to multiplication by $\mu$. So, there is a constant $K_5$ such that for $n$ large enough and $-n\leq j_1< j_2\leq -n_0$,
\[
\frac{1}{K_5}<\left|\frac{\delta_{j_1}(c_n)\cdots\delta_{j_2 - 1}(c_n)}{\mu^{j_2 - j_1}}\right|<K_5.
\]
Using Inequality \eqref{eq:k0} and $\delta_j (c_n)= \delta_{m_n+j}(c_n)$ with $m_n = n+k_0$, we deduce that for $n$ large enough and $1\leq j_1< j_2\leq m_n-1$
\[
\frac{1}{K}<\left|\frac{\delta_{j_1}(c_n)\cdots\delta_{j_2 - 1}(c_n)}{\mu^{j_2 - j_1}}\right|<K \quad\text{with}\quad K:=K_5 K_0^{k_0+n_0-1}.
\]
The lemma follows since for $1\leq j\leq m_n-1$, 
\[\Delta_j = \delta_{1}(c_n)\cdots\delta_{j}(c_n)\and\Delta_{-j} = \delta_{m_n-j}(c_n)\cdots\delta_{m_n-1}(c_n).\qedhere\]
\end{proof}

According to Lemma \ref{lemmabelow}, the coefficients of 
\[
P_n(z):=\frac{\chi^2_{f_{c_n}}(z/\mu)}{\chi^2_{f_{c_n}}(0)}=1+\frac{\Delta_{-1}}{\mu}z+\cdots+\frac{\Delta_{-(n-1)}}{\mu^{n-1}}z^{n-1}
\]
and
\[
Q_n(z):=\frac{P_n(z)}{\Delta_{-(n-1)}z^{n-1}} = 1+\frac{\mu}{\Delta_{1}}\frac{1}{z}+\cdots+\frac{\mu^{n-1}}{\Delta_{n-1}}\frac{1}{z^{n-1}}
\]
are uniformly bounded. In particular, the sequence $(P_n)$ is normal in $\Disk$, and the sequence $(Q_n)$ is normal outside $\overline\Disk$. In Lemma \ref{equilemma} below, we prove that as $n\to \infty$, the roots of $P_n$ equidistribute on the unit circle, so the roots of $\chi_{f_{c_n}}$ equidistribute on the circle $\bigl\{|\lambda| = 1/|\mu|\bigr\}$. 
\end{proof}

\begin{lemma}\label{equilemma}
Let 
\[
P_n=1+\cdots+c_nz^{d_n}\in \C[z] \and Q_n=\frac{P_n}{c_nz^{d_n}}=1+\cdots+\frac{1}{c_nz^{d_n}}\in \C[1/z]. 
\]
If 
\begin{itemize}
\item the sequence $(d_n)$ tends to $\infty$ as $n\to \infty$,  
\item the sequence $(P_n)$ is normal in the unit disk $\Disk$,  and 
\item the sequence $(Q_n)$ is normal in $\C\sm \overline\Disk$,
\end{itemize}
then as $n\to \infty$, the roots of $P_n$ equidistribute on the unit circle $S^1$. 
\end{lemma}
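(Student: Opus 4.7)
The plan is to show weak convergence $\m_{P_n}\to\sigma$, where $\sigma$ denotes normalized Lebesgue measure on the unit circle $S^1$, by establishing two facts in tandem: (i) the zeros of $P_n$ concentrate on $S^1$, and (ii) every nonzero complex moment $\int z^k\,d\m_{P_n}$ tends to $0$ as $n\to\infty$. Together with weak compactness of probability measures on a bounded annulus, these force the unique subsequential weak limit to be $\sigma$.

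For (i), I would use normality at both ends. Since $P_n(0)=1$ and $Q_n(\infty)=1$, any locally uniform subsequential limit of $(P_n)$ on $\Disk$ (resp.\ of $(Q_n)$ on $\Chat\sm\overline{\Disk}$) is holomorphic and takes the value $1$ at $0$ (resp.\ at $\infty$). A contradiction argument then yields radii $r_0<1<R_0$ with $|P_n|\geq 1/2$ on $\{|z|\leq r_0\}$ and $|Q_n|\geq 1/2$ on $\{|z|\geq R_0\}$ for all sufficiently large $n$: otherwise one could extract $z_{n_k}\to 0$ with $P_{n_k}(z_{n_k})=0$, violating $P_\infty(0)=1$ in the normal limit. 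Because the zeros of $P_n$ in $\C\sm\{0\}$ are exactly those of $Q_n$ and $P_n(0)=1$, all zeros of $P_n$ then lie in the closed annulus $A:=\{r_0\leq|z|\leq R_0\}$ once $n$ is large. A Jensen's formula refinement, applied to $P_n$ on $|z|=r$ for $r<1$ and to $w\mapsto Q_n(1/w)$ on $|w|=1/R$ for $R>1$, further shows that $N_n(r)=O(1)$ and $d_n-N_n(R)=O(1)$ for any $0<r<1<R$, so $\m_{P_n}$ puts vanishing mass outside every open annulus about $S^1$; every weak subsequential limit of $(\m_{P_n})$ is thus supported on $S^1$.

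For (ii), I would exploit the factorizations
\[
P_n(z)=\prod_{j=1}^{d_n}\left(1-\frac{z}{z_{n,j}}\right)
\qquad\text{and}\qquad
Q_n(z)=\prod_{j=1}^{d_n}\left(1-\frac{z_{n,j}}{z}\right).
\]
The Taylor coefficients of $P_n$ at $0$ are, up to signs, the elementary symmetric polynomials $e_k$ in $(1/z_{n,j})_j$, and those of $Q_n$ at $\infty$ are the $e_k$ in $(z_{n,j})_j$. Cauchy's estimate on a compact subdisk of $\Disk$ (resp.\ on a compact neighbourhood of $\infty$ in $\Chat\sm\overline{\Disk}$), combined with the uniform bound coming from normality, gives $|e_k|\leq C_k$ independently of $n$ for every fixed $k$ in both cases. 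Newton's identities then express the power sums $s_k^{\pm}:=\sum_j z_{n,j}^{\pm k}$ as universal polynomials in $e_1,\dots,e_k$, yielding $|s_k^{\pm}|=O(1)$ as $n\to\infty$. Dividing by $d_n\to\infty$ gives $\int z^{\pm k}\,d\m_{P_n}=s_k^{\pm}/d_n\to 0$ for every fixed $k\geq 1$.

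The argument closes by moment matching. Because $\m_{P_n}$ is eventually supported in the compact annulus $A$, the family $(\m_{P_n})$ is tight; let $\mu$ be any weak subsequential limit. By (i), the support of $\mu$ is contained in $S^1$, and by (ii) together with the boundedness of the continuous functions $z^{\pm k}$ on $A$, $\int z^k\,d\mu=0$ for every $k\in\Z\sm\{0\}$. Parametrising $S^1$ by $z=e^{{\rm i}\theta}$ identifies these integrals with the Fourier coefficients of $\mu$; all of them vanish except the zeroth, so $\mu=\sigma$, and the full sequence converges weakly to $\sigma$. I expect the main subtlety to lie in step (i): both normality hypotheses, together with the specific normalizations $P_n(0)=1$ and $Q_n(\infty)=1$, are needed to corral the zeros into a bounded annulus, after which the moment computation is essentially formal.
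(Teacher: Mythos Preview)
Your argument is correct and follows the same two-step strategy as the paper: first confine the limiting mass to $S^1$, then kill all nonzero Fourier coefficients. The technical implementation differs in both steps, however.

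For the support step, the paper simply invokes Hurwitz: after passing to subsequences with $P_n\to\phi$ on $\Disk$ and $Q_n\to\psi$ on $\Chat\sm\overline\Disk$ (with $\phi(0)=\psi(\infty)=1$), the number of zeros of $P_n$ in any compact subset of $\C\sm S^1$ is eventually bounded by the number of zeros of $\phi$ or $\psi$ there. Your Jensen-formula count achieves the same $O(1)$ bound, but Hurwitz gets there in one line.

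For the moment step, the paper works with the logarithmic derivative rather than Newton's identities. Writing $P_n'/P_n=d_n/z+Q_n'/Q_n$, one has for $k>0$
\[
d_n\int_A z^k\,\d\m_{P_n}=\int_{|z|=R}z^k\,\frac{P_n'}{P_n}\,\dz=\int_{|z|=R}z^k\,\frac{Q_n'}{Q_n}\,\dz\ \underset{n\to\infty}\longrightarrow\ \int_{|z|=R}z^k\,\frac{\psi'}{\psi}\,\dz,
\]
which is finite, forcing $m_k=0$; the case $k<0$ uses $\phi$ on $|z|=r$. Your route via Cauchy estimates on the coefficients (the $e_k$) followed by Newton's identities to bound the power sums $s_k^\pm$ is equally valid and arguably more elementary, though it trades a single contour identity for a combinatorial recursion. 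Either way, the essential point is the same: normality bounds the power sums uniformly in $n$, and division by $d_n\to\infty$ does the rest.
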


\begin{proof}
Extracting a subsequence if necessary, we may assume that the sequence $\m_{P_n}$ converges to a probability measure $\m$ on $\Chat$. It is enough to show that $\m$ coincides with the normalized Lebesgue measure on $S^1$. We first show that the support of $\m$ is contained in $S^1$. We then show that its Fourier coefficients all vanish except the constant coefficient. 

Extracting a further subsequence if necessary, we may assume that 
\begin{itemize}
\item the sequence $(P_n)$ converges locally uniformly in $\Disk$ to a holomorphic map $\phi$, and 
\item the sequence $(Q_n)$ converges locally uniformly in $\Chat\sm \overline \Disk$ to a holomorphic map $\psi$. 
\end{itemize}
Since $P_n(0)=Q_n(\infty)=1$, the limits satisfy $\phi(0)=\psi(\infty)=1$. As a consequence, the zeros of $P_n$ stay bounded away from $ 0$ and $\infty$. In addition, $\phi$ and $\psi$ do not identically vanish, so their zeros  are isolated, and within any compact subset of $\C\sm S^1$, the number of zeros of $P_n$ (counting multiplicities) is uniformly bounded. This shows that the support of $\m$ is contained in $S^1$. 

Now choose $r<1<R$ so that all the roots of $P_n$ remain in the annulus $A:=\{r<|z|<R\}$. For $k\in\Z$, let $m_k$ be the Fourier coefficient
\[
m_k:=\int_{S^1} z^k\  \d\m = \int_A z^k\ \d\m = \lim_{n\to\infty} \int_A z^k \ \d\m_{P_n}
\]
By the Residue Theorem, if $k>0$, then 
\begin{align*}
d_n\cdot  \int_A z^k \ \d\m_{P_n}=\int_{|z|=R} z^k \frac{P_n'(z)}{P_n(z)}\  \dz 
&=\int_{|z|=R} z^k\cdot \left(\frac{d_n}{z}+\frac{Q'_n(z)}{Q_n(z)}\right)\  \dz\\
&=\int_{|z|=R} z^k\frac{Q'_n(z)}{Q_n(z)}\ \dz\\
& \underset{n\to\infty}\longrightarrow \int_{|z|=R} z^k\frac{\psi'(z)}{\psi(z)}\ \dz.
\end{align*}
Similarly, if $k<0$, then 
\begin{align*}
d_n\cdot \int_A z^k \d\m_{P_n}=\int_{|z|=r} z^k\frac{P_n'(z)}{P_n(z)}\  \dz 
\underset{n\to\infty}\longrightarrow \int_{|z|=r} z^k\frac{\phi'(z)}{\phi(z)}\ \dz.
\end{align*}
In both cases, the limit is finite and since $d_n\to \infty$, we deduce that $m_k=0$. 
\end{proof}

To complete the proof of Theorem \ref{theo:4}, it is enough to show that the set of $\mu$ such that there exists a $c_0\in \C$ for which the critical point of $f_{c_0}$ is preperiodic to a fixed point of multiplier $\mu$, is dense in $[1,1/r_D]$. This follows from Lemmas \ref{lem:mcmmult} and \ref{lem:bif} below. 

\begin{lemma}\label{lem:mcmmult}
For each $t\in [1,1/r_D]$, there is a parameter $c\in \partial \M_D$ such that $f_c$ has a fixed point with multiplier of modulus $t$. 
\end{lemma}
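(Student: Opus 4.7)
I parametrize the fixed points of $f_c(z)=z^D+c$ by the fixed point itself: $z$ is fixed if and only if $c=z-z^D$, and the multiplier is $\mu(z)=Dz^{D-1}$. For each $t\geq 0$, writing $\rho:=(t/D)^{1/(D-1)}$, the set of parameters $c$ admitting a fixed point of multiplier modulus $t$ is the closed curve
\[X_t:=\bigl\{z-z^D \ :\ |z|=\rho\bigr\}\subset \C.\]
The task is to exhibit, for every $t\in[1,1/r_D]$, a point $c\in X_t\cap\partial\M_D$.

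\textbf{The two endpoints.} When $t=1$, one has $\rho=(1/D)^{1/(D-1)}$ and $X_1$ is precisely the boundary of the main cardioid $\H_1$ of $\M_D$, which is contained in $\partial\M_D$. At $t=1/r_D$ I produce an explicit Misiurewicz parameter by requiring the critical orbit to satisfy $0\mapsto c\mapsto z$ with $z$ a fixed point of $f_c$. The two conditions $c=z-z^D$ and $c^D+c=z$ force $c^D=z^D$, so $c=\omega z$ for some $D$-th root of unity $\omega\neq 1$, with $z^{D-1}=1-\omega$ and multiplier $\mu=D(1-\omega)$; thus $|\mu|=2D\bigl|\sin(\pi k/D)\bigr|$ for $\omega=e^{2\pi ik/D}$. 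Maximizing over $k\in\{1,\ldots,D-1\}$ yields $2D$ when $D$ is even (attained at $k=D/2$, $\omega=-1$) and $2D\cos\bigl(\pi/(2D)\bigr)$ when $D$ is odd (attained at $k=(D\pm 1)/2$, the two roots nearest $-1$), matching $1/r_D$ in both cases. Such $c$ is a strict Misiurewicz parameter and therefore lies in $\partial\M_D$.

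\textbf{Intermediate values.} For $t\in(1,1/r_D)$ I argue by continuity. A distinguished fixed-point branch $z(c)$ continues analytically across $\partial\H_1$ into the repelling regime via the implicit function theorem, avoiding the finite discriminant locus of $z^D-z+c$ in $c$. I take a continuous arc $\gamma\subset\partial\M_D$ joining a point of $\partial\H_1$ (where $|\mu(z(c))|=1$) to the extremal Misiurewicz tip produced above (where $|\mu(z(c))|=1/r_D$), along which $z(c)$ extends continuously. The real-valued function $s\mapsto\bigl|\mu\bigl(z(\gamma(s))\bigr)\bigr|$ is then continuous with these two boundary values, and by the intermediate value theorem it attains every $t\in[1,1/r_D]$.

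\textbf{The main obstacle.} The delicate step is constructing the arc $\gamma\subset\partial\M_D$ and ensuring the chosen fixed-point branch persists continuously along it. Because $\partial\M_D$ is fractal, path-connectedness cannot be invoked naively; one concrete approach is via external rays landing at parabolic or preperiodic boundary points, parametrized continuously by the ray angle. An alternative bypasses a path entirely: use the density of Misiurewicz parameters in $\partial\M_D$ to realize a dense set of multiplier moduli in $[1,1/r_D]$, then extract limits using the closedness of $\partial\M_D$.
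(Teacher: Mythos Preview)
Your setup and the two endpoint computations are exactly what the paper does: parametrize fixed points by $\beta$ via $c=\beta-\beta^D$, $\mu=D\beta^{D-1}$; note $t=1$ gives $\partial\H_1\subset\partial\M_D$; and for $t=1/r_D$ produce the Misiurewicz parameter with $c=\omega\beta$, $\omega$ a $D$-th root of unity nearest $-1$. So far so good.

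The gap is in your ``intermediate values'' step, and neither of the two fixes you propose actually works. Building a continuous arc $\gamma\subset\partial\M_D$ via external rays would need the landing map $\theta\mapsto c(\theta)$ to be continuous on an interval of angles; for the Multibrot sets this is exactly local connectivity (MLC), which is open. Your density alternative is circular: to ``realize a dense set of multiplier moduli in $[1,1/r_D]$'' at Misiurewicz parameters you would already need to know which moduli occur on $\partial\M_D$, which is the content of the lemma. There is also a second issue you underplay: even along a path in the $c$-plane, a single branch $z(c)$ need not extend globally, since the discriminant locus of $z^D-z+c$ lies on $\partial\H_1\subset\partial\M_D$ itself.

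The paper bypasses all of this with one observation you never use: $\partial\M_D$ is \emph{connected}. No path is needed---the image of a connected set under a continuous real-valued map is an interval, so once one exhibits the values $1$ and $1/r_D$, every intermediate value is hit. To make the branch-selection issue disappear, run the argument in your own $\beta$-coordinate rather than in $c$: the finite critical values of $\psi(\beta)=\beta-\beta^D$ are precisely the cusps of $\H_1$, hence lie in $\M_D$; therefore $\psi^{-1}(\Chat\setminus\M_D)$ is a (simply) connected neighborhood of $\infty$, so $\psi^{-1}(\M_D)$ is connected and full, and its boundary $\psi^{-1}(\partial\M_D)$ is connected. On that connected set the single continuous function $\beta\mapsto|D\beta^{D-1}|$ takes the value $1$ on the whole circle over $\partial\H_1$ and the value $1/r_D$ at your Misiurewicz $\beta$, and you are done.
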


\begin{proof}
The boundary of $\M_D$ is connected. As $c$ varies in the boundary of $\M_D$, the multipliers of fixed points vary continuously. Thus, it suffices to show that $\partial \M_D$ contains a parameter $c_0$ for which $f_{c_0}$ has a fixed point with multiplier of modulus $1$, and a parameter $c_1$ for which $f_{c_1}$ has a fixed point of modulus $1/r_D$. 

Note that $f_c(\beta) = \beta$ and $f'_c(\beta) = \mu$ if and only if
\begin{equation}\label{eq:cz}
c^{D-1} = \frac{\mu}{D}\left(1-\frac{\mu}{D}\right)^{D-1}\and \beta = \frac{c}{1-\mu/D}  .
\end{equation}
First, when $c^{D-1} = \frac{1}{D}\left(1-\frac{1}{D}\right)^{D-1}$, then $f_c$ has a fixed point of multiplier $1$. The corresponding parameters $c$ belong to the boundary of $\M_D$.
Second, let $\omega$ be a $D$-th root of unity closest to $-1$. If $D$ is even, we have $\omega=-1$. If $D$ is odd, we have $\omega = \exp\left(±\pi {\rm i}\frac{D-1}{D}\right)$. 
Note that 
\[|1-\omega|^2 =\left(1+\cos \frac{\pi}{D}\right)^2 + \sin^2\frac{\pi}{D} = 2 + 2 \cos \frac{\pi}{D} = 4\cos^2\frac{\pi}{2D}.\]
In both cases,
\[|1-\omega| = \frac{1}{D r_D}.\]
Set $\mu:=D(1-\omega)$, choose $c$ and define $\beta$ so that Equation \eqref{eq:cz} holds. Then, $f_c$ has a fixed point at $\beta$ with multiplier $\mu$ of modulus $1/r_D$. In addition, 
\[f_c(0) = c = \left(1-\frac{\mu}{D}\right)\beta = \omega \beta,\quad \text{so that}\quad f_c^{\circ 2}(0) = \beta.\] 
Thus, $c$ is a {\em Misiurewicz parameter}; that is, $f_c$ is postcritically finite with $c\in \partial \M_D$, as required. 
\end{proof}

\begin{lemma}\label{lem:bif}
Let $c_0\in \partial \M_D$, and let $\beta_0$ be a repelling fixed point of $f_{c_0}$ of multiplier $\mu_0$. Then, there exists a sequence of parameters $c_n$ converging to $c_0$ such that $f_{c_n}$ has a fixed point $\beta_n\in {\cal P}_{f_{c_n}}$ converging to $\beta_0$. 
\end{lemma}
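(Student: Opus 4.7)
The plan is a standard Montel-theorem argument exploiting the non-normality of critical orbit iterates at boundary points of the Multibrot set. First, since $\beta_0$ is a repelling fixed point of $f_{c_0}$ (multiplier $\mu_0\neq 1$), the implicit function theorem gives a neighborhood $U$ of $c_0$ and a holomorphic function $\beta\colon U\to\C$ with $\beta(c_0)=\beta_0$ and $f_c\bigl(\beta(c)\bigr)=\beta(c)$ for all $c\in U$. If $\beta_0\in{\cal P}_{f_{c_0}}$, we simply set $c_n=c_0$ for all $n$ and we are done; so assume $\beta_0\notin{\cal P}_{f_{c_0}}$.

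Next, I would choose $k\geq 2$ so that $D^k\geq 4$. Since $\beta_0$ is not postcritical, no critical point of $f_{c_0}^{\circ k}$ maps to $\beta_0$, so the equation $f_{c_0}^{\circ k}(z)=\beta_0$ has exactly $D^k$ simple roots. Select three of them, say $z_1^0,z_2^0,z_3^0$, distinct from $\beta_0$ and pairwise distinct. By the implicit function theorem, shrinking $U$ if necessary, these extend to holomorphic functions $h_i\colon U\to\C$ satisfying $f_c^{\circ k}\bigl(h_i(c)\bigr)=\beta(c)$; shrinking $U$ further, the three graphs $c\mapsto h_i(c)$ remain pairwise disjoint and disjoint from $c\mapsto\beta(c)$ throughout $U$.

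The core of the argument is Montel's theorem applied to $\phi_n(c):=f_c^{\circ n}(0)$. For any neighborhood $V\subset U$ of $c_0$, the family $\{\phi_n\}_{n\geq 1}$ fails to be normal on $V$: on $\M_D\cap V$ the iterates are uniformly bounded by $2^{1/(D-1)}$ (Corollary \ref{coro:MD}), while every neighborhood of $c_0\in\partial\M_D$ contains parameters $c\notin\M_D$ for which $\phi_n(c)\to\infty$, so no spherical-analytic limit can be globally defined on $V$. On the other hand, the three-moving-targets version of Montel's theorem (obtained by post-composing with the M\"obius transformation sending $h_1(c),h_2(c),h_3(c)$ to $0,1,\infty$ fiberwise) asserts that if $\phi_n(c)\neq h_i(c)$ for all $c\in V$, $n\geq 1$, and $i\in\{1,2,3\}$, then $\{\phi_n\}$ is normal on $V$. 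This contradiction produces some $c\in V$, $n\geq 1$, and $i\in\{1,2,3\}$ with $\phi_n(c)=h_i(c)$, whence
\[
\phi_{n+k}(c)=f_c^{\circ k}\bigl(h_i(c)\bigr)=\beta(c),
\]
so $\beta(c)\in{\cal P}_{f_c}$. Taking $V$ to shrink down to $\{c_0\}$ yields parameters $c_n\to c_0$ with $\beta_n:=\beta(c_n)\in{\cal P}_{f_{c_n}}$ and $\beta_n\to\beta_0$, as required.

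The main obstacle is really the non-normality assertion: one must rule out that $\{\phi_n\}$ admits a normal subsequence on $V$, which boils down to the familiar dichotomy between boundedness on $\M_D$ and escape to $\infty$ off $\M_D$, together with the fact that a spherical-analytic limit on a connected open set is either everywhere finite or identically $\infty$. Everything else is implicit-function-theorem bookkeeping and should go through cleanly.
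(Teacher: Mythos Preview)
Your argument is correct and follows the same Montel-theorem strategy as the paper, but with a different and somewhat heavier choice of moving targets. The paper exploits the symmetry of $f_c(z)=z^D+c$ directly: picking a $D$-th root of unity $\omega\neq 1$, the point $\omega\beta(c)$ is automatically a preimage of $\beta(c)$ under $f_c$, and the two holomorphic graphs $\beta$ and $\omega\beta$ are disjoint (since $\beta(c)\neq 0$ near $c_0$). Together with the omitted value $\infty$ (the $\zeta_k$ are polynomials in $c$), this already gives three targets for Montel, so non-normality forces some $\zeta_k(c)$ to hit $\beta(c)$ or $\omega\beta(c)$; in the latter case $\zeta_{k+1}(c)=\beta(c)$. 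Your version instead manufactures three finite targets by pulling $\beta$ back through $f_c^{\circ k}$ with $D^k\geq 4$, which is perfectly valid and in fact works for more general families where no such symmetry is available. The trade-off is extra implicit-function-theorem bookkeeping (and the need to verify that $\beta_0$ is not a critical value of $f_{c_0}^{\circ k}$, which you correctly deduce from $\beta_0\notin{\cal P}_{f_{c_0}}$). Your treatment of non-normality is also slightly more explicit than the paper's one-line appeal to $c_0\in\partial\M_D$; both are standard.
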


\begin{proof}
Since $\beta_0$ is repelling, there is a function $\beta$ defined and holomorphic near $c_0$, such that $f_c\circ \beta_c = \beta_c$. Let $\omega\neq1$ be a $D$-th root of unity. Since $c_0\in \partial \M_D$, the sequence of functions $\zeta_k:=c\mapsto f_c^{\circ k}(0)$ is not normal at $c_0$. It follows from  Montel's Theorem that in any neighborhood of $c_0$, the sequence $(\zeta_k)$ cannot avoid both $\beta$ and $\omega\beta$. When $\zeta_k = \omega\beta$, then $\zeta_{k+1} = \beta$. So there is a sequence of complex numbers $(c_n)$ converging to $c_0$ and a sequence of integers $(k_n)$ tending to $\infty$ such that $\zeta_{k_n}(c_n) = \beta(c_n)=:\beta_n$.  
\end{proof}

\section{Questions for further study}\label{sec:questions} 

\noindent We conclude with some remaining questions. 

\subsection{Periodic unicritical polynomials}

Let $f$ be  a periodic unicritical polynomial of degree $D$. An eigenvalue of $f_*:\cQ(\Chat)\to \cQ(\Chat)$ satisfies $\frac{1}{4D}<|\lambda|<1$ and for $D$ even $\Sigma(D)$ contains the annulus $\bigl\{\frac{1}{2D}\leq |\lambda|\leq 1\bigr\}$. The estimate obtained for $D$ odd is not as good. 

\begin{question}
For $D$ odd, does $\Sigma(D)$ contain the annulus $\bigl\{\frac{1}{2D}\leq |\lambda|\leq 1\bigr\}$?
\end{question}

We shall say that an eigenvalue of $f_*:\cQ(\Chat)\to \cQ(\Chat)$  is a {\em small eigenvalue} if $|\lambda|<\frac{1}{2D}$. 
According to Proposition \ref{prop:largeeig}, a small eigenvalue belongs to $\spec_f$. In particular, if the critical point is periodic of period $m$, there are are at most $m-2$ small eigenvalues. 

\begin{question}
How many small eigenvalues can a periodic unicritical polynomial have?
\end{question}

\subsection{Spectral gap}

Let $f$ be  a postcritically finite rational map. We saw that for periodic unicritical polynomials of degree $D$, the eigenvalues in $\spec_f\sm\{0\}$ and $\Lambda_f\sm \{0\}$ remain uniformly bounded away from $0$. At the same time, the set of periodic unicritical polynomials of degree $D$ is bounded in moduli space. 

\begin{question}
If the conjugacy class of $f$ remains in a compact subset of moduli space, do the sets $\spec_{f}\sm\{0\}$ and $\Lambda_{f}\sm \{0\}$ remain bounded away from $0$?
\end{question}

We saw that the eigenvalues of $f_*:\cQ(\Chat)\to \cQ(\Chat)$ are related to the multipliers of cycles of $f$. These in turn are related to the Lyapunov exponent $L(f)$ of $f$ (which remains bounded on compact subsets of moduli space since it is continuous). 

\begin{question}
Is there a relation between $\exp\bigl(-L(f)\bigr)$ and $\inf\bigl\{|\lambda|~:~\lambda\in \spec_f\sm\{0\}\bigr\}$ or $\inf\bigl\{|\lambda|~:~\lambda\in \Lambda_f\sm\{0\}\bigr\}$?
\end{question}

\subsection{Diagonalizability}

\begin{question}
Let $f$ be postcritically finite. Is $f_*:\qf\to \qf$ diagonalizable? Is $f_*:\cQ(\Chat)\to \cQ(\Chat)$ diagonalizable?
\end{question}

\begin{question}
Let $f$ be a periodic unicritical polynomial. Are the roots of the characteristic polynomial $\chi_f$ simple?
\end{question}

\end{document}